\numberwithin{equation}{section}
\newtheorem{definition}{Definition}[section]
\newtheorem{theorem}{Theorem}[section]
\newtheorem{lemma}[theorem]{Lemma}
\newtheorem{corollary}[theorem]{Corollary}
\newtheorem{proposition}[theorem]{Proposition}
\theoremstyle{remark}
\newtheorem{remark}{Remark}[section]
\DeclareMathOperator{\Div}{div}
\DeclareMathOperator{\supp}{supp}
\renewcommand{\geq}{\geqslant}
\renewcommand{\ge}{\geqslant}
\renewcommand{\leq}{\leqslant}
\renewcommand{\le}{\leqslant}
\newcommand{\R}{\ensuremath{\mathbb{R}}}
\newcommand{\N}{\ensuremath{\mathbb{N}}}
\newcommand{\dd}{\mathrm{d}}
\newcommand{\Id}{\ensuremath{\mathrm{Id}}}
\begin{document}
\title[The Euler-Alignment system]
{Global well-Posedness and asymptotic behavior in critical spaces for
  the compressible Euler system with velocity alignment}

\author[Xiang Bai]{Xiang Bai}
\address{Laboratory of Mathematics and Complex Systems (MOE), School of Mathematical Sciences, Beijing Normal University, Beijing 100875, P.R. China}
\email{xiangbai@mail.bnu.edu.cn}
\author{Qianyun Miao}
\address{School of Mathematics and Statistics, Beijing Institute of Technology, Beijing 100081, P. R. China}
\email{qianyunm@bit.edu.cn}
\author{Changhui Tan}
\address{Department of Mathematics, University of South Carolina, Columbia SC 29208, USA}
\email{tan@math.sc.edu}
\author{Liutang Xue}
\address{Laboratory of Mathematics and Complex Systems (MOE), School of Mathematical Sciences, Beijing Normal University, Beijing 100875, P.R. China}
\email{xuelt@bnu.edu.cn}

\subjclass[2010]{35Q31, 35R11, 76N10, 35B40.}
\keywords{Euler-alignment system, fractional diffusion, global
  well-posedness, critical Besov space, asymptotic behavior}

\begin{abstract}
In this paper, we study the Cauchy problem of the compressible Euler system with strongly singular velocity alignment.
We prove the existence and uniqueness of global solutions in critical Besov spaces to the considered system with small initial data. The local-in-time solvability is also addressed.
Moreover, we show the large-time asymptotic behavior and optimal decay estimates of the solutions as $t\to \infty$.


\end{abstract}

\maketitle
\section{Introduction}\label{INTR}
\setcounter{section}{1}\setcounter{equation}{0}

\subsection{The Euler-alignment system}
We consider the following Cauchy problem of the compressible Euler system in $\mathbb{R}^{+}\times\mathbb{R}^N$,
\begin{equation}\label{eq.EA}
\begin{cases}
  \partial_t\rho+\Div(\rho u)=0,\\
  \partial_t(\rho u)+\Div(\rho u\otimes u) + \nabla P(\rho)=\mathcal{D}(u,\rho),\\
  (\rho,u)|_{t=0}=(\rho_0,u_0),
\end{cases}
\end{equation}
where $\rho$ is the density, $u=(u_1,\cdots, u_N)$ is the velocity field, and $P(\rho)$ stands for the pressure, which is given by the power law
\begin{equation}\label{eq:pressure}
  P(\rho)=\kappa\rho^{\gamma},\quad \kappa>0,\,\, \gamma\ge 1.
\end{equation}
The term $\mathcal{D}(u,\rho)$ represents the nonlocal velocity alignment which is given as follows
\begin{align}\label{eq:Du-rho}
  \mathcal{D}(u,\rho)(t,x)=-\rho(t,x)\int_{\R^N}\phi(x-y)\big(u(t,x)-u(t,y)\big)\rho(t,y)\dd y.
\end{align}
Here $\phi$ is called the \emph{communication weight}, measuring the strength of the alignment interactions. It is naturally assumed to be a non-negative and radially decreasing function.

The system \eqref{eq.EA} is known as the \emph{Euler-alignment system}. It is the macroscopic representation of the celebrated Cucker-Smale model \cite{cucker2007emergent}
\begin{equation}\label{eq:CuckerSmale}
\begin{cases}
  \dot{X}_i(t) = V_i(t),\quad 1\leq i\leq M, \\
  \dot{V}_i(t) = -\frac{1}{M} \sum\limits_{j\neq i}\phi(X_i(t)-X_j(t)) (V_i(t)-V_j(t)),
\end{cases}
\end{equation}
an $M$-agent interacting system that describes the collective motions in animal flocks.
Ha and Tadmor \cite{ha2008particle} formally derive \eqref{eq.EA} from \eqref{eq:CuckerSmale} through a kinetic equation
\begin{equation}\label{eq:kinetic}
  \partial_tf+v\cdot\nabla_x f+\Div_vQ[f,f]=0,\,\,
  Q[f,f](t,x,v)=-f(t,x,v)\int_{\R^N}\int_{\R^N}\phi(x-y)(v-w)f(t,y,w)\dd
  y\dd w.
\end{equation}
Hydrodynamic limiting systems \eqref{eq.EA} with different type of pressures \eqref{eq:pressure} can be rigorous derived from \eqref{eq:kinetic}, including the pressure-less dynamics ($\kappa=0$) \cite{figalli2018rigorous}, isothermal pressure ($\kappa>0, \gamma=1$)\cite{karper2015hydrodynamic}, and others \cite{fetecau2016first,poyato2017euler}.

The global well-posedness theory for the pressureless Euler-alignment system \eqref{eq.EA} with $P\equiv0$ has been established in \cite{tadmor2014critical} for the case when the communication weight is bounded and Lipschitz. A critical threshold phenomenon was discovered: global regularity depends on initial data. A sharp threshold condition is obtained in \cite{carrillo2016critical} for the system in 1D with the help of an auxiliary quantity $G=\partial_xu+\phi\ast\rho$ that satisfies the continuity equation. The theory has been extended to the case when the communication weight is weakly singular: unbounded but integrable \cite{tan2020euler}. For higher dimensions, sharp results are only available for radial \cite{tan2021eulerian} and uni-directional \cite{lear2022existence} data, due to the lack of the auxiliary quantity, see also \cite{he2017global}.

Another interesting type of communication weights are strongly singular near the origin,
with a prototype taking the following form
\begin{equation}\label{eq:phialpha}
  \phi(x) = \phi_\alpha (x) =\frac{ c_{\alpha,N} }{|x|^{N+\alpha}}, \qquad c_{\alpha,N}= \frac{2^{\alpha} \Gamma(\frac{\alpha+N}{2})}{\pi^{N/2}\Gamma(-\frac\alpha2)}.
\end{equation}
It is evident that for $\alpha\in(0,2)$, the singular alignment $\mathcal{D}(u,\rho)$ with such a weight $\phi_\alpha$ can be expressed as a commutator form related to the fractional Laplace operator $\Lambda^\alpha = (-\Delta)^{\frac{\alpha}{2}}$ (see Definition \ref{def:fraLap}):
\begin{equation}\label{eq:alignment}
  \mathcal{D}(u,\rho)= - \rho \big(\Lambda^\alpha (\rho u)-u\Lambda^\alpha \rho\big)
  = -\rho \big([\Lambda^\alpha, u] \rho \big).
\end{equation}
This nonlocal dissipation has an intriguing regularization effect to the solutions. Global regularity is obtained for any non-viscous smooth initial data for the system in the one-dimensional torus by Shvydkoy and Tadmor \cite{shvydkoy2017eulerian} for $1\le\alpha<2$, and by Do et al \cite{do2018global} for $0<\alpha<1$ (see also \cite{shvydkoy2018eulerian}). The results are then applied to general singular alignment interactions \cite{kiselev2018global}, as well as taking into account the misalignment effect \cite{miao2021global}. For the multi-dimensional case, global well-posedness are only known for small initial data around an equilibrium state. See the work of Shvydkoy \cite{shvydkoy2019global} for smooth initial data $(\rho_0,u_0)\in H^{N+4}(\mathbb{T}^N)\times H^{N+3+\alpha}(\mathbb{T}^N)$, and Danchin et al \cite{danchin2019regular} for small initial data that lie in critical Besov space $(\rho_0,u_0)\in\dot{B}_{N,1}^{1}(\R^N)\times\dot{B}_{N,1}^{2-\alpha}(\R^N)$, subject to additional regularity assumptions. Global regularity for general large initial data remains a challenging open problem.

The global well-posedness theory for the Euler-alignment system \eqref{eq.EA} with pressure is much less understood compared with the pressure-less system. When the communication weight $\phi$ is bounded and Lipschitz, Y.-P. Choi \cite{choi2019global} proved global regularity of the system with isothermal pressure, for small smooth initial data in the periodic domain $\mathbb{T}^N $. A similar result was obtained in \cite{tong2020global} for the system with isentropic pressures ($\kappa>0, \gamma>1$).

The main focus of this paper is on the Euler-alignment system with pressure and with a strongly singular communication weight \eqref{eq:phialpha}. The goal is to understand the interplay between the pressure \eqref{eq:pressure} and the nonlocal regularization from the alignment \eqref{eq:alignment}.

In 1D periodic domain $\mathbb{T}$, Constantin, Drivas and Shvydkoy \cite{constantin2020entropy} proved the global existence of smooth solutions for the system with an additional local dissipation term of the form \eqref{eq:Dloc}. They make use of the auxiliary quantity to build a hierarchy of entropies. The result does not require a smallness assumption, but is limited to one dimension.

For the system in $\mathbb{T}^N$, Chen, Tan and Tong \cite{chen2021global} established the global well-posedness for smooth initial data with a smallness assumption. The result is partially extended to the whole space $\mathbb{R}^N$. However, an additional linear damping term is required to obtain the desired result.

We would like to comment that most global well-posedness results in the literature on the Euler-alignment system \eqref{eq.EA} with strongly singular alignment \eqref{eq:alignment} are on the periodic domain $\mathbb{T}^N$. One important reason is that solutions can lose regularity when vacuum arises \cite{tan2019singularity, arnaiz2021singularity}. It is easier to obtain \emph{a priori} positive lower bound on the density under periodic setup, as mass cannot diffuse to infinity. Additional analytical treatments are required to guarantee no vacuum formations for the system in the whole space $\R^N$.

\subsection{The barotropic compressible Navier-Stokes system}
To study the Euler-alignment system \eqref{eq.EA} in $\mathbb{R}^N$, we shall mention a very related system. If we replace the dissipation term $\mathcal{D}(u,\rho)$ by
\begin{equation}\label{eq:Dloc}
  \mathcal{D}_{\mathrm{loc}}(u) = \mu_1 \Delta u +\mu_2 \nabla\Div u,\quad \mu_1>0,\mu_1+\mu_2>0,
\end{equation}
the system \eqref{eq.EA} becomes the classical \emph{barotropic compressible Navier-Stokes system}, which has been intensely studied in the recent decades.

Serrin \cite{serrin1959uniqueness} and Nash \cite{nash1962probleme} established the local existence and uniqueness of smooth non-vacuous solutions. One can also see Solonnikov \cite{Solo73} and Valli \cite{valli1982existence} for the local well-poseness of strong solutions with Sobolev regularities. Matsumura and Nishida \cite{matsumura1979initial,matsumura1980initial} proved the global existence and uniqueness of strong solutions provided that initial data $(\rho_0,u_0)$ is a small perturbation of constant non-vacuous state $(\bar{\rho},0)$ in three dimension, and under an additional $L^1$-smallness of the initial perturbation, they showed the following optimal decay estimate
\begin{align}\label{eq:L2-decay}
  \|(\rho -\bar{\rho}, u)(t)\|_{L^2} \lesssim (1+t)^{-\frac{3}{4}},\quad \forall\,t>0.
\end{align}
Later, noting that the barotropic compressible Navier-Stokes system is invariant under the transformation
\[
  \rho(t,x)\mapsto \rho(\lambda^2 t, \lambda x),\quad u(t,x)\mapsto \lambda u(\lambda^2 t,\lambda x),\quad \lambda >0,
\]
with a modification of the pressure $P\mapsto \lambda^2 P$, Danchin \cite{danchin2000global} proved the global existence and uniqueness of strong solution in the framework of critical $L^2$-based Besov space with initial data close to a stable equilibrium. More precisely, under the following smallness condition in critical Besov spaces (see Definition \ref{def:Besov})
\begin{equation}\label{eq:NSsmall}
  \Vert \rho_0-\bar{\rho}\Vert_{ \widetilde{B}^{\frac{N}{2}-1,\frac{N}{2}}}
  +\Vert u\Vert_{\dot{B}^{\frac{N}{2}-1}_{2,1}}<\varepsilon,
\end{equation}
the barotropic compressible Navier-Stokes system has a global unique solution.

Furthermore, for small perturbation of non-vacuous equilibrium $(\bar{\rho},0)$ in $L^p$-type Besov norms, Charve and Danchin \cite{charve2010global} and Chen, Miao and Zhang \cite{chen2010global} independently constructed the global unique strong solution in the framework of critical $L^p$-based Besov spaces. 
One can also see \cite{haspot2011existence} for a simpler proof of the same result by using a good unknown called the effective velocity.
Concerning the large-time behavior of the above obtained global strong solutions, Okita \cite{okita2014optimal} considered the $N\geq 3$ dimension and established the optimal time-decay estimate of global solutions in the critical $L^2$-framework with an additional smallness condition on $\Vert \rho_0-\bar{\rho}\Vert_{\dot{B}^{0}_{1,\infty}}+\Vert u_0\Vert_{\dot{B}^{0}_{1,\infty}}$.
Danchin \cite{danchin2018fourier} gave an another description of the time-decay estimate as above with $N\ge 2$.
Danchin and Xu \cite{danchin2017optimal} showed that under an additional smallness condition of low frequencies
(see \eqref{eq:low-high-norm} for the definition of norm $\|\cdot\|^\ell_{\dot B^s_{p,r}}$)
\begin{equation}\label{eq.xu.small}
  \Vert (\rho_0-\bar{\rho},u_0)\Vert_{\dot B^{-s_0}_{2,\infty}}^\ell <\varepsilon,
  \quad s_0 = N\left(\tfrac{2}{p} - \tfrac{1}{2}\right),
\end{equation}
the $L^p$ norm of the global critical solutions constructed in \cite{charve2010global,chen2010global,haspot2011existence} decays like $t^{-N(\frac{1}p-\frac{1}{4})}$ for $t\to +\infty$ (exactly as \eqref{eq:L2-decay} with $p=2, N=3$). One can see Xu \cite{xu2019low} for a different low-frequency smallness assumption to get the same time-decay estimate. Recently, Xin and Xu \cite{xin2021optimal} replaced the smallness condition \eqref{eq.xu.small} with a mild assumption like $\Vert (\rho_0-\bar{\rho},u_0)\Vert_{\dot B^{-s_0}_{2,\infty}}^\ell <\infty$ and obtained the optimal time-decay estimate in the general critical $L^p$-framework.

\subsection{Main result: global well-posedness}
In this paper, we consider the Euler-alignment system \eqref{eq.EA} in $\R^N$, with power-law type pressure \eqref{eq:pressure} and strongly singular alignment interactions \eqref{eq:alignment} with $1<\alpha<2$. We mainly study the global well-posedness of the system with initial data $(\rho_0,u_0)$ around the non-vacuous equilibrium $(\rho\equiv1,u\equiv0)$, with minimal regularity assumptions on the initial data.

Analogous to the study of the barotropic compressible Navier-Stokes system, we observe that the system \eqref{eq.EA} is invariant under the transformation
\begin{equation}\label{eq:scale-trans}
  \rho(t,x)\mapsto \rho(\lambda^\alpha t, \lambda x),\quad u(t,x)\mapsto \lambda^{\alpha-1} u(\lambda^\alpha t,\lambda x),\quad \lambda >0,	
\end{equation}
with a modification of the pressure $P\mapsto \lambda^{2\alpha-2} P$. Therefore, we shall aim to solve the Euler-alignment system \eqref{eq.EA} in the critical function space which is invariant with respect to the transform \eqref{eq:scale-trans}. Obviously, the homogeneous Besov space $\dot{B}^{\frac{N}{2}}_{2,1}\times\dot{B}^{\frac{N}{2}+1-\alpha}_{2,1}$ of initial data $(\rho_0-1,u_0)$ is a suitable space that is scaling critical. However,  spectral analysis of the linearized equation of system \eqref{eq.EA} (see \eqref{eq.lineareqbn0} below) indicates that the regularity $\rho\in\dot{B}^{\frac{N}{2}}_{2,1}$ is not enough to control the pressure term, as it is not invariant under the scaling \eqref{eq:scale-trans}. Instead, we work on a hybrid Besov space $\widetilde{B}^{\frac{N}{2}+1-\alpha,\frac{N}{2}}=\dot B^{\frac{N}{2}+1-\alpha}_{2,1} \cap \dot B^{\frac{N}{2}}_{2,1}$ (see Definition \ref{def:Besov}). This approach is pioneered by Danchin \cite{danchin2000global} on the barotropic compressible Navier-Stokes system.


\begin{theorem}[Global well-posedness]\label{cor.global.solution}
Let $N\geq2$. Consider the Euler-alignment system \eqref{eq.EA} with pressure \eqref{eq:pressure}, alignment interaction \eqref{eq:alignment} with $1<\alpha<2$, and initial data $\rho_0-1\in \widetilde{B}^{\frac{N}{2}+1-\alpha,\frac{N}{2}}(\R^N)$ and $u_0 \in \dot{B}^{\frac{N}{2}+1-\alpha}_{2,1}(\R^N)$.
There exists a small constant $\varepsilon>0$, such that if
\begin{equation}\label{eq:cond0}
  \Vert \rho_0-1\Vert_{ \widetilde{B}^{\frac{N}{2}+1-\alpha,\frac{N}{2}}}
  +\Vert u_0\Vert_{\dot{B}^{\frac{N}{2}+1-\alpha}_{2,1}}<\varepsilon,
\end{equation}
then the Euler-alignment system \eqref{eq.EA} has a global unique solution $(\rho,u)$ such that
\begin{equation}\label{eq:rho>0}
  \rho>0,\quad \textrm{in}\;\;\R^+\times \R^N,
\end{equation}
and
\begin{equation}\label{eq:rho-u-bdd0}
\begin{split}
  \rho-1 \in C_b([0,+\infty);\widetilde{B}^{\frac{N}{2}+1-\alpha,\frac{N}{2}}), \quad
  u \in C_{b}([0,+\infty);\dot{B}^{\frac{N}{2}+1-\alpha}_{2,1}) \cap L^1(\mathbb{R}^+;{\dot{B}^{\frac{N}{2}+1}_{2,1}}).
\end{split}
\end{equation}


Moreover, if additionally $(\rho_0-1,u_0)\in \widetilde{B}^{s,s+\alpha-1}(\mathbb{R}^N)\times\dot{B}^{s}_{2,1}(\mathbb{R}^N)$ with $s>\frac{N}{2}+1-\alpha$,
then the above constructed solution also belongs to the corresponding space,  i.e.,
\begin{equation}\label{eq:rho-u-hreg}
\begin{aligned}
  \rho-1\in {\widetilde{L}^\infty (\mathbb{R}^+;\widetilde{B}^{s,s+\alpha-1})},
  \quad u\in{\widetilde{L}^\infty (\mathbb{R}^+;\dot{B}^s_{2,1})}\cap{L^1(\mathbb{R}^+;\dot{B}^{s+\alpha}_{2,1}}).
\end{aligned}
\end{equation}
See Definitions \ref{def:Besov} and \ref{def:CL} for the spaces involved.
\end{theorem}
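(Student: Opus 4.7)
The plan is to follow the critical-Besov strategy pioneered by Danchin for the barotropic compressible Navier-Stokes system, adapted to the fractional dissipation $\Lambda^\alpha$ and the nonlocal commutator structure of the alignment force. Setting $a:=\rho-1$ and using the commutator identity \eqref{eq:alignment}, I rewrite the Cauchy problem \eqref{eq.EA} as the semilinear perturbation
\begin{equation}
\begin{cases}
\partial_t a + \Div u = -\Div(a u),\\
\partial_t u + \Lambda^\alpha u + P'(1)\nabla a = -u\cdot\nabla u - I(a)\nabla a - [\Lambda^\alpha, u]\,a,
\end{cases}
\end{equation}
with $I(a):=P'(1+a)/(1+a)-P'(1)$ vanishing at $a=0$. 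A Fourier-side analysis of the linear part (after splitting $u$ into its Leray and potential components) yields the eigenvalues $\lambda_\pm(\xi) = \tfrac12\bigl(-|\xi|^\alpha\pm\sqrt{|\xi|^{2\alpha}-4P'(1)|\xi|^2}\bigr)$. Because $\alpha>1$, low frequencies ($|\xi|\lesssim 1$) exhibit damped oscillations at rate $|\xi|^\alpha/2$ coupling $a$ and $u$ symmetrically, while high frequencies ($|\xi|\gtrsim 1$) decouple into a fast parabolic mode of rate $|\xi|^\alpha$ carried mainly by $u$ and a slow mode of rate $|\xi|^{2-\alpha}$ carried mainly by $a$. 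This spectral dichotomy is exactly what forces the hybrid regularity index $\widetilde B^{\frac{N}{2}+1-\alpha,\frac{N}{2}}$ on the density.

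The a priori bound is then produced block-wise: apply $\Delta_j$ to the linearized system and construct an equivalent quadratic Lyapunov functional $E_j^2\simeq\|\Delta_j a\|_{L^2}^2+\|\Delta_j u\|_{L^2}^2+\eta_j\langle \Delta_j a,\Div\Delta_j u\rangle$ with cross-term coefficient $\eta_j$ tuned to each dyadic regime (in the spirit of Danchin), so as to extract frequency-wise Gronwall inequalities that reproduce the damping rates above. Summing with the appropriate hybrid weights yields the linear estimate
\begin{equation}
X(T) := \|a\|_{\widetilde L^\infty_T\widetilde B^{\frac{N}{2}+1-\alpha,\frac{N}{2}}} + \|u\|_{\widetilde L^\infty_T\dot B^{\frac{N}{2}+1-\alpha}_{2,1}} + \|a\|_{L^1_T\widetilde B^{\frac{N}{2}+1,\frac{N}{2}+\alpha}} + \|u\|_{L^1_T\dot B^{\frac{N}{2}+1}_{2,1}} \lesssim X(0) + \mathcal{N}(a,u),
\end{equation}
where $\mathcal{N}$ collects the nonlinear source terms. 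The classical pieces $au$, $u\otimes u$, $I(a)\nabla a$ are controlled via Bony paraproduct and composition estimates in critical Besov spaces; the nonlocal commutator $[\Lambda^\alpha,u]\,a$ is decomposed into paraproduct and remainder pieces and the $\alpha$ derivatives are distributed asymmetrically so each component of $a$ pairs with the portion of $\Lambda^\alpha u$ made dissipatively available by the linear analysis. The outcome is $\mathcal{N}(a,u)\lesssim X(T)^2$, and under \eqref{eq:cond0} a standard continuation argument closes $X(T)\lesssim\varepsilon$ for all $T>0$.

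With the a priori bound secured, global existence follows from a Friedrichs spectral truncation (restrict frequencies to $[2^{-n},2^n]$, solve the resulting ODE in a Banach space, extend globally by the uniform estimate, and pass to the limit by Aubin--Lions compactness). The positivity \eqref{eq:rho>0} is automatic from $\|a\|_{L^\infty_{T,x}}\lesssim\|a\|_{\widetilde B^{\frac{N}{2}+1-\alpha,\frac{N}{2}}}<C\varepsilon<\tfrac12$. Uniqueness is proved for the difference $(\delta a,\delta u)$ in a regularity one notch below critical, to absorb the apparent derivative loss from $u\cdot\nabla a$, closing Gronwall via the already integrable dissipation of $u$. For the higher-regularity propagation \eqref{eq:rho-u-hreg} with $s>\frac{N}{2}+1-\alpha$, the same block-wise Lyapunov scheme applied at level $s$ gives nonlinear right-hand sides that decompose as (critical norm)$\times$(higher norm) plus a self-product absorbed by smallness, and Gronwall with the $L^1_T\dot B^{\frac{N}{2}+1}_{2,1}$ control on $u$ produces the uniform-in-time bound.

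The central obstacle is the commutator $[\Lambda^\alpha,u]\,a$ in this hybrid framework. Since $\alpha>1$, the velocity regularity $\dot B^{\frac{N}{2}+1-\alpha}_{2,1}$ falls \emph{short} of the critical embedding into $L^\infty$, so the usual $L^\infty$-paraproduct bound is unavailable and the $\alpha$ derivatives must be redistributed by hand. The correct allocation — matching the low-frequency part of $a$ (at regularity $\frac{N}{2}+1-\alpha$) against $\Lambda^\alpha u$, whose $L^1_T$ control is furnished by the linear dissipation, and the high-frequency part of $a$ (at regularity $\frac{N}{2}$) against the slower $|\xi|^{2-\alpha}$ damping it enjoys — is exactly what pins down the hybrid indices and the admissible range $1<\alpha<2$. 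Getting this accounting right is the technical heart of the proof.
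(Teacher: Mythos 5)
Your strategy is essentially the paper's: symmetrize the system (the paper uses the unknown $\sigma$ with $\rho=1+h(\sigma)$ rather than $a=\rho-1$, which is cosmetic), run a frequency-localized Lyapunov analysis with a cross term tuned to the two dyadic regimes created by the threshold $|\xi|^{\alpha-1}\sim 1$, close the nonlinearity with Bony paraproducts and a Kato--Ponce commutator estimate for $[\Lambda^\alpha,u]$, then Friedrichs approximation, uniqueness one notch below critical, and propagation of subcritical regularity. Your spectral picture (oscillatory $|\xi|^\alpha/2$ damping at low frequency; a fast $|\xi|^\alpha$ mode on $u$ and a slow $|\xi|^{2-\alpha}$ mode on the density at high frequency) is exactly the one the paper exploits.

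Two points need repair. First, your a priori functional claims the density gains $\alpha$ derivatives in $L^1_T$ on its high-frequency block, i.e.\ $\|a\|_{L^1_T\widetilde{B}^{\frac{N}{2}+1,\frac{N}{2}+\alpha}}$. This contradicts your own spectral analysis: the slow mode is damped only at rate $|\xi|^{2-\alpha}$, so the correct gain there is $2-\alpha$ derivatives and the space is $\widetilde{B}^{\frac{N}{2}+1,\frac{N}{2}+2-\alpha}$ (as the paper obtains); with the index as written the linear estimate is false for $\alpha>1$. Second, ``pass to the limit by Aubin--Lions'' does not by itself handle the term $\mathcal{J}_n\big(\Lambda^\alpha(u^n h(\sigma^n))-u^n\Lambda^\alpha h(\sigma^n)\big)$ on $\R^N$: local strong compactness gives convergence only on compact sets, while $\Lambda^\alpha$ has a long-range kernel, so the contribution of the integrand from $|y|\to\infty$ must be controlled separately. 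The paper does this by testing against compactly supported $\phi$, splitting space with a partition $\chi_k+\sum_{j\geq k}\varphi_j$, and using the $|x-y|^{-N-\alpha}$ decay of the kernel to sum the tails; without some such argument the existence proof is incomplete. Neither issue changes the architecture of the proof, but both must be fixed for the argument to go through.
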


\begin{remark}
The critical space we work on contains very rough initial data. In particular, $u_0$ is not necessarily Lipschitz. Therefore, even local well-posedness can not be obtained directly from the classical Cauchy-Lipschitz theory. We include a local well-posedness result in Theorem \ref{th.local.solution.nl}. The smallness condition \eqref{eq:cond0} can be relaxed for local-in-time solutions.
\end{remark}

\begin{remark}
Our result works on dimensions $N\geq2$. When $N=1$, we are able to establish \emph{a priori} estimates of the Euler-alignment system as in Section \ref{priori}. However, some technical estimates fail when constructing the solution e.g. \eqref{eq:par-t-sig}, \eqref{eq:par-t-u}, as well as the uniqueness argument. This is due to the roughness of initial data that we consider. If we further assume
\[ 
\partial_x \rho_0\in \widetilde{B}^{\frac{3}{2}-\alpha,\frac{1}{2}}(\mathbb{R})
\quad\text{and}\quad
\partial_x u_0\in \dot B^{\frac{3}{2}-\alpha}_{2,1}(\mathbb{R}),
\]
Theorem \ref{cor.global.solution} can be easily extended to $N=1$.
\end{remark}

\begin{remark}
For less singular communication weight with $0<\alpha\le1$, we observe a different spectral structure in the linearized equation \eqref{eq.lineareqbn0}. Hence, the global well-posedness result is expected to be different. This case will be discussed in a separate work.
\end{remark}


\subsection{Asymptotic behavior}
Next, we turn our attention to the asymptotic behavior of the Euler-alignment system \eqref{eq.EA}. The system inherits a remarkable \emph{flocking} phenomenon from the Cucker-Smale model \eqref{eq:CuckerSmale}. For the pressure-less system with bounded Lipschitz communications, Tadmor and Tan \cite{tadmor2014critical} showed that for any smooth subcritical initial data, the solution converges (in appropriate sense) to a traveling wave profile
\begin{equation}\label{eq:flock}
\rho(t,x)\to\rho_\infty(x-\bar{u}t),\qquad u(t,x)\to\bar{u}.
\end{equation}
There are two ingredients of flocking. First, the support of density $\rho$ stays bounded in all time, i.e., if $\rho_0$ is compactly supported, then $\rho_\infty$ is compactly supported as well. Another ingredient is the \emph{velocity alignment}. Here, $\bar{u}$ represents the average velocity. It is determined by initial data, thanks to the conservation of momentum. Without loss of generality, we assume $\bar{u}=0$ throughout the paper. See \cite{do2018global,shvydkoy2017eulerian2} for discussions on flocking for pressure-less Euler-alignment system with strongly singular alignment interactions \eqref{eq:alignment} on $\mathbb{T}$. Additional geometric structures of the limiting profile $\rho_\infty$ is investigated in \cite{leslie2019structure,lear2022geometric}. 

When pressure is presented, the asymptotic density profile is known to be a constant $\rho_\infty(x)\equiv\bar{\rho}$. For simplicity, we set $\bar{\rho}=1$. The asymptotic flocking behavior is proved in \cite{tong2020decay} for bounded alignment interactions, and \cite{chen2021global} for strongly singular alignment interactions. Both results considered the periodic domain $\mathbb{T}^N$.

To our best knowledge, most existing results on asymptotic behaviors for the Euler-alignment system with singular alignment interactions are on the periodic domains. The decay rates in \eqref{eq:flock} are exponentially in time. For the system in $\R^N$, we do not expect the decay rate to be exponential. Rather, analogous to the heat equation, the diffusion leads to a polynomial rate of decay in time.

Our next result is concerned with the asymptotic behavior of the Euler-alignment system \eqref{eq.EA} in $\R^N$.
\begin{theorem}[Asymptotic behavior]\label{thm:decay}
Let $N\ge 2$ and $1<\alpha<2$.
Assume $(\rho,u)$ is a global solution of the Euler-alignment system \eqref{eq.EA} that satisfies \eqref{eq:rho>0} and
\begin{equation}\label{eq:rho-u-bdd}
  \rho-1\in \widetilde{L}^{\infty}(\mathbb{R}^{+};\widetilde{B}^{\frac{N}{2}+1-\alpha,\frac{N}{2}}), \quad 
  u\in \widetilde{L}^{\infty}(\mathbb{R}^{+};\dot{B}^{\frac{N}{2}+1-\alpha}_{2,1})\cap L^{1}(\mathbb{R}^{+};{\dot{B}^{\frac{N}{2}+1}_{2,1}}).
\end{equation}
Then for every $0< s< 1- \frac{1}{\alpha}$ we have
\begin{equation}\label{eq:rho-u-decay1}
  \Vert (\rho-1,u)^\ell(t)\Vert_{\dot{B}^{\frac{N}{2}+1-\alpha+s\alpha}_{2,1}} + \Vert \rho^h(t)-1\Vert_{ \dot{B}^{\frac{N}{2}}_{2,1}}+\Vert u^h(t)\Vert_{\dot{B}^{\frac{N}{2}+1-\alpha}_{2,1}}
  \le C(1+t)^{-s},
\end{equation}
where the constant $C>0$ depends on the norms of $(\rho-1,u)$ in \eqref{eq:rho-u-bdd}. Besides, we have
\begin{equation}\label{eq:rho-u-decay2}
  \lim_{t\to \infty}\big(\Vert \rho(t)-1\Vert_{\widetilde{B}^{\frac{N}{2}+1-\alpha,\frac{N}{2}}}+\Vert u(t)\Vert_{\dot{B}^{\frac{N}{2}+1-\alpha}_{2,1}}\big) = 0.
\end{equation}
If we assume, in addition, $(\rho_0-1,u_0)^\ell\in \dot{B}^{-s_0}_{2,\infty}(\R^N)$ with $s_0\in (\alpha-\frac{N}{2}-1,\frac{N}{2})$,
then for all $s_1\in [-s_0, \frac{N}{2}+1-\alpha]$,
\begin{equation}\label{eq:rho-u-decay3}
  \Vert \rho(t)-1\Vert_{\widetilde{B}^{s_1,\frac{N}{2}}}+\Vert u(t)\Vert_{\widetilde{B}^{s_1,\frac{N}{2}+1-\alpha}} \le C(1+t)^{-\frac{s_1+s_0}{\alpha}}.
\end{equation}
\end{theorem}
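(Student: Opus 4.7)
The plan is to adapt the time-weighted Besov framework of Danchin--Xu for the compressible Navier--Stokes system to the fractional dissipation structure of \eqref{eq.EA}. The starting point is a spectral analysis of the linearization around $(1,0)$: setting $b=\rho-1$ and projecting the velocity onto its compressible scalar $v=\Lambda^{-1}\Div u$, the linearized system reads
\[
  \partial_t\begin{pmatrix}\hat b\\ \hat v\end{pmatrix}+\begin{pmatrix}0 & |\xi|\\ P'(1)|\xi| & |\xi|^\alpha\end{pmatrix}\begin{pmatrix}\hat b\\ \hat v\end{pmatrix}=0.
\]
Its eigenvalues form a conjugate pair with real part $\approx |\xi|^\alpha/2$ on $|\xi|\lesssim 1$, and split into a fast branch $\approx|\xi|^\alpha$ and a slow branch $\approx|\xi|^{2-\alpha}$ for $|\xi|\gg 1$. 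On each dyadic block one can then build a Lyapunov functional $L_j\simeq\|\Delta_j b\|_{L^2}^2+\|\Delta_j u\|_{L^2}^2$ (with a small cross term) satisfying $\tfrac{\dd}{\dd t}L_j+c\,2^{j\alpha}L_j\lesssim (\text{nonlinear})_j$ on low frequencies. For high frequencies a Haspot-type effective velocity $w:=u+\Lambda^{-\alpha}\nabla b$ decouples the system, giving $\partial_t w+\Lambda^\alpha w=\mathrm{l.o.t.}$ for $w$ together with a purely dissipative equation $\partial_t b+\Lambda^{2-\alpha}b=-\Div w$ for $b$, which explains the slow mode.

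With this structure, \eqref{eq:rho-u-decay1} follows from a Duhamel analysis combined with the pointwise smoothing bound $\sup_{j\le j_0}2^{js\alpha}e^{-ct2^{j\alpha}}\lesssim t^{-s}$: applied to initial data bounded in $\dot B^{N/2+1-\alpha}_{2,1}$, this immediately produces the linear decay $\|(b,u)^\ell_{\mathrm{lin}}(t)\|_{\dot B^{N/2+1-\alpha+s\alpha}_{2,1}}\lesssim t^{-s}$, and the restriction $s<1-\tfrac{1}{\alpha}$ is exactly what keeps the gained regularity $\tfrac{N}{2}+1-\alpha+s\alpha$ below the top index $\tfrac{N}{2}$ of the density. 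The nonlinear terms (the transport $\Div(bu)$, the convection $u\!\cdot\!\nabla u$, and the commutator $\rho[\Lambda^\alpha,u]\rho$) are then estimated by paraproduct/remainder and commutator calculus using only the a priori bounds \eqref{eq:rho-u-bdd}; closing the bootstrap relies crucially on $\int_0^\infty\|u\|_{\dot B^{N/2+1}_{2,1}}\,\dd t<\infty$, which absorbs the nonlinearities in a norm weighted by $(1+t)^s$. The high-frequency decay is obtained by running the same argument on $w$ and recovering $b^h$ from its fractional heat equation above.

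The limit \eqref{eq:rho-u-decay2} is a direct consequence of \eqref{eq:rho-u-decay1} together with a standard density/interpolation argument in Besov spaces; the low-frequency norms tend to zero via \eqref{eq:rho-u-decay1}, while the remaining high-frequency pieces vanish thanks to the damping of $w$ and the time-integrability of $u$. For \eqref{eq:rho-u-decay3} the plan is a two-step bootstrap. First, we show that the extra low-frequency norm $\dot B^{-s_0}_{2,\infty}$ is propagated uniformly in $t$, by estimating every nonlinear contribution in this negative-index space via Bony's decomposition and commutator estimates. Second, we interpolate
\[
  \|f(t)\|_{\dot B^{s_1}_{2,1}}\lesssim\|f(t)\|_{\dot B^{-s_0}_{2,\infty}}^{\,1-\theta}\,\|f(t)\|_{\dot B^{N/2+1-\alpha+s\alpha}_{2,1}}^{\,\theta},
\]
with $\theta$ chosen so that the resulting power of $(1+t)$ equals $-(s_1+s_0)/\alpha$, exactly matching \eqref{eq:rho-u-decay3}. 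The hard part is the first step: controlling $\rho[\Lambda^\alpha,u]\rho$ and the transport terms in $\dot B^{-s_0}_{2,\infty}$ is delicate because $s_0$ lives in the full window $(\alpha-\tfrac{N}{2}-1,\tfrac{N}{2})$ and the commutator carries $\alpha$ derivatives of $\rho$; the endpoints $s_0\to \tfrac{N}{2}$ and its interplay with the hybrid norm $\widetilde B^{\frac{N}{2}+1-\alpha,\frac{N}{2}}$ force a careful frequency-by-frequency bookkeeping rather than a black-box product rule.
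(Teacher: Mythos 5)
Your treatment of \eqref{eq:rho-u-decay1} and \eqref{eq:rho-u-decay2} follows essentially the paper's route: a dyadic Lyapunov functional with $\frac{\dd}{\dd t}L_j+c2^{j\alpha}L_j\lesssim(\cdot)$ on low frequencies, the kernel bound $\sup_j 2^{js\alpha}t^se^{-ct2^{j\alpha}}\lesssim1$, and a Gronwall closure driven solely by $u\in L^1(\R^+;\dot B^{N/2+1}_{2,1})$ with no smallness (this last point is precisely the paper's selling point, and you have identified it). Your high-frequency decoupling via the effective velocity $w=u+\Lambda^{-\alpha}\nabla b$ differs from the paper, which instead uses a hybrid energy $Y_j^2=\|\Lambda^{\alpha-1}\dot\Delta_j\sigma\|_{L^2}^2+2\lambda^2\mu^{-2}\|\dot\Delta_jd\|_{L^2}^2-2\lambda\mu^{-1}(\dot\Delta_jd\,|\,\Lambda^{\alpha-1}\dot\Delta_j\sigma)$ plus a separate $L^2$ estimate on $d$; since $2-\alpha<\alpha$ for $\alpha>1$, your decoupling is plausible and would yield the same uniform high-frequency damping, so this is a legitimate alternative. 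Be aware, though, that making Gronwall work requires more than ``$\|u\|_{L^1_t\dot B^{N/2+1}_{2,1}}<\infty$ absorbs the nonlinearity'': the weight $\psi_j(t,\tau)=e^{-\bar\mu2^{j\alpha}(t-\tau)}t^s\tau^{-s}$ is singular at $\tau=0$, and the paper needs a H\"older argument in time with exponent $r'$ close to $1/s$ together with $\int_0^te^{-c2^{j\alpha}(t-\tau)}t^s\tau^{-s}\dd\tau\lesssim2^{-j\alpha}$ to reduce the kernel to the $L^\infty_t\dot B^{N/2+1-\alpha}_{2,1}\cap L^1_t\dot B^{N/2+1}_{2,1}$ norms of $u$.

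The genuine gap is in your plan for \eqref{eq:rho-u-decay3}. Interpolating between a \emph{uniform} bound in $\dot B^{-s_0}_{2,\infty}$ and the rate-$s$ decay at index $\frac N2+1-\alpha+s\alpha$ cannot produce the rate $(1+t)^{-(s_1+s_0)/\alpha}$: writing $s_1=(1-\theta)(-s_0)+\theta(\frac N2+1-\alpha+s\alpha)$, interpolation yields the rate $\theta s$, whereas the target is $\theta(\frac N2+1-\alpha+s\alpha+s_0)/\alpha$; these coincide only when $s_0=\alpha-\frac N2-1$, i.e.\ at the excluded left endpoint of the admissible range of $s_0$. For every admissible $s_0$ your scheme gives a strictly suboptimal exponent. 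The paper avoids this by running the entire time-weighted dyadic machinery directly at the level $\bar s=-s_0$, proving $t^s\|(\sigma,u)^\ell(t)\|_{\dot B^{-s_0+s\alpha}_{2,1}}\le C$ for $s$ all the way up to $(\frac N2+1-\alpha+s_0)/\alpha$, which can exceed $1$; since the kernel lemma fails for $s\ge1$, this forces the additional iteration $Z_{s,\bar s}\le C+CZ_{s-1,\bar s}$ of Lemma \ref{th.low-fre-est-n}, a step entirely absent from your proposal. To repair your argument you would either have to adopt this direct weighted estimate at index $-s_0$, or replace your interpolation by the standard differential-inequality version (interpolating $\|f\|_{\dot B^{s_1}}$ between $\dot B^{-s_0}_{2,\infty}$ and the \emph{dissipation} norm $\dot B^{s_1+\alpha}$ inside the energy inequality), which is a different argument from the one you wrote.
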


\begin{remark}
The decay rate obtained in \eqref{eq:rho-u-decay3} is optimal. It agrees with the rate of decay for the solutions to the fractional heat equation, that is a linearized equation of \eqref{eq.EA}.
\end{remark}

\begin{remark}
For small initial data $(\rho_0,u_0)$ satisfying \eqref{eq:cond0}, condition \eqref{eq:rho-u-bdd} is guaranteed by Theorem \ref{cor.global.solution}. Hence the decay estimates follow directly from Theorem \ref{thm:decay}.
On the other hand, Theorem \ref{thm:decay} only assumes that the solution $(\rho-1,u)$ is bounded as in \eqref{eq:rho-u-bdd}. In the existing literature on the barotropic compressible Navier-Stokes system  \cite{danchin2017optimal,okita2014optimal,xu2019low} and so on, smallness assumptions on the solution $(\rho-1,u)$ are required to obtain the decay estimates. We adopt a different approach that greatly relaxes the assumptions compared with the aforementioned work.

Let us point out that our decay estimate \eqref{eq:rho-u-decay3} requires $s_0<\frac{N}{2}$. The endpoint $s_0=\frac{N}{2}$ is not captured by our approach on a basic lack of paraproduct in endpoint $\dot{B}^{-N/2}_{2,1}$. Under additional smallness conditions on initial data, the decay estimate can be proved in an alternative way, analogous to \cite{danchin2017optimal,okita2014optimal,xu2019low}.
\end{remark}

\subsection{Outline of the paper}
The paper is organized  as follows. In Section \ref{Preliminary}, we introduce the definition of hybrid Besov space and fractional Laplace operator, and present some useful auxiliary lemmas.
In Section \ref{priori}, we reformulate our system to \eqref{eq.EAsigma.nl}, establish the \textit{a priori} estimates for a linearized system \eqref{eq.lineareqbn0} and the nonlinear system \eqref{eq.EAsigma.nl}.
Section \ref{global_solution} are devoted to the proof of global existence and uniqueness for our system \eqref{eq.EA}.
In Section \ref{decay}, we present the proof of large time behavior stated in Theorem \ref{thm:decay}.

\section{Preliminary}\label{Preliminary}
This section includes some basic analytical tools needed in this paper. We first introduce the concept of Besov spaces and  some properties. Then we recall the definition of fractional Laplacian and the Kato-Ponce type commutator estimates, particularly in Besov spaces.

\subsection{Besov spaces and some related estimates}
We first introduce the Littlewood-Paley decomposition.
One can choose a nonnegative radial function $\varphi\in C_c^\infty(\mathbb{R}^N)$ be
supported in
the annulus $\{\xi\in \mathbb{R}^N: \frac{3}{4} \leq |\xi|\leq  \frac{8}{3} \}$ such that (e.g. see \cite{bahouri2011fourier})
\begin{align}\label{eq:chi-phi}
  \sum_{j\in\mathbb{Z}}\varphi_j(\xi)=1,\ \ \ \ \forall \xi\in\mathbb{R}^N \setminus \{0\},
\end{align}
where $\varphi_j(\xi)=\varphi(2^{-j}\xi)$.
We define the localization operator:
$$
\dot{\Delta}_ju=\varphi_j(D)u,\ \ \ \dot{S}_ju=\sum_{k\le j-1}\dot{\Delta}_ku.
$$

Now we present the definition of (homogeneous) Besov space and its hybrid type. 
\begin{definition}[Besov sapce and hybrid Besov sapce]\label{def:Besov}
Let $s,s_1,s_2\in \mathbb{R}$, $(p,r)\in [1,\infty]^2$. Denote by $\mathcal{P}(\R^N)$ the set of all polynomials and by
$\mathcal{S}_h^\prime(\mathbb{R}^N):=\mathcal{S}'(\mathbb{R}^N)/\mathcal{P}(\R^N)$ the quotient space.
We define the homogeneous Besov space $\dot{B}^s_{p,r}=\dot{B}^s_{p,r}(\R^d)$ as
\begin{equation*}
  \dot{B}^s_{p,r}(\mathbb{R}^N):=\Big\{u\in \mathcal{S}_h^\prime(\mathbb{R}^N);
  \|u\|_{\dot{B}^s_{p,r}(\mathbb{R}^N)}
  =\big{\Vert} \big\{2^{js}\Vert{\dot{\Delta}_ju}  \Vert_{L^p(\mathbb{R}^N)}\big\}_{j\in\mathbb{Z}}\big{\Vert}_{\ell^r(\mathbb{Z})}<\infty\Big\}.
\end{equation*}
Let $j_0\in \mathbb{Z}$, and we define the hybrid Besov space $\widetilde{B}^{s_1,s_2} = \widetilde{B}^{s_1,s_2} (\R^N)$ as the set of all $u\in  \mathcal{S}_h^\prime(\mathbb{R}^N)$ such that
\begin{equation*}
  \Vert u \Vert_{\widetilde{B}^{s_1,s_2}} := \sum_{j=-\infty}^{j_0}2^{js_1}\Vert \dot{\Delta}_j  u\Vert_{L^2}+\sum_{j=j_0+1}^{+\infty}2^{js_2}\Vert \dot{\Delta}_j u\Vert_{L^2}<\infty.
\end{equation*}
\end{definition}

By restricting the norm of $\dot B^s_{p,r}$ to the low or high frequency parts of tempered distributions,
we also get that, for some $j_0\in \mathbb{Z}$,
\begin{align}\label{eq:low-high-norm}
  \|u\|_{\dot B^s_{p,r}}^\ell := \big\|\{2^{js} \|\dot \Delta_j u\|_{L^p}\}_{j\leq j_0} \big\|_{\ell^r},
  \quad \textrm{and}\quad
  \|u\|_{\dot B^s_{p,r}}^h := \big\|\{2^{js} \|\dot \Delta_j u\|_{L^p}\}_{j\geq j_0} \big\|_{\ell^r}.
\end{align}

\begin{definition}\label{def:CL}
Let $s, s_1,s_2\in \mathbb{R}$, $(p,q,r)\in[1,\infty]^3$, $T>0$.
The Chemin-Lerner space $\widetilde{L}^q_T(\dot{B}^s_{p,r})$ is defined by
\begin{align}
  \|u\|_{\widetilde{L}^q_T(\dot{B}^s_{p,r})}
  := \big{\Vert} \big\{2^{js}\Vert{\dot{\Delta}_ju}  \Vert_{L^q_T(L^p)}\big\}_{j\in\mathbb{Z}}\big{\Vert}_{\ell^r(\mathbb{Z})},
\end{align}
and $\widetilde{L}^q_T(\widetilde{B}^{s_1,s_2})$ is defined by that for $j_0\in \mathbb{Z}$,
\begin{align}
  \|u\|_{\widetilde{L}^q(\widetilde{B}^{s_1,s_2})}
  :=\sum_{j=-\infty}^{j_0}2^{js_1}\Vert \dot{\Delta}_j  u\Vert_{L^q_T(L^2)}
  + \sum_{j=j_0+1}^{+\infty}2^{js_2}\Vert \dot{\Delta}_j u\Vert_{L^q_T(L^2)}.
\end{align}
\end{definition}

The following Bony's paraproduct decomposition is very useful in the proof.
\begin{definition} \label{def-2.5}
For any $u, v\in \mathcal{S}_h^\prime(\mathbb{R}^N)$, $u v$ has the Bony's paraproduct decomposition:
\begin{equation}\label{eq:Bony-dec}
  u v = T_u v+ T_v u + R(u,v),
\end{equation}
where
\begin{align}\label{def:paraprod}
  {T}_{u}v=\sum_j {\dot{S}_{j-1}u}{\dot{\Delta}_j v}, \quad \textrm{and} \quad {R}(u,v)=\sum_j \dot{\Delta}_j u\widetilde{\dot{\Delta}}_j v, \quad \widetilde{\dot{\Delta}}_j := \dot{\Delta}_{j-1}+\dot{\Delta}_j + \dot{\Delta}_{j+1}.
\end{align}
\end{definition}


We have the following product estimates in the hybrid Besov space $\widetilde{B}^{s_1,s_2}$.
\begin{lemma}\label{lemma.Bilinear}
\begin{enumerate}[(1)]
\item For any $s_1\leq s_1'\leq \frac{N}{2}$ and $s_2\in\R$, there exists a constant $C=C(s_1,s_1',N)$ such that
\begin{align}\label{eq:prod-es1}
  \|T_{\nabla u} v\|_{\widetilde{B}^{s_1 +s_2 -\frac{N}{2}, s_1'+ s_2 -\frac{N}{2}}} \leq C \|u\|_{\widetilde{B}^{s_1,s_1'}} \|v\|_{\dot B^{s_2+1}_{2,1}}.
\end{align}
\item For any $(s_1,s_1',s_2)\in \R^3$ satisfying $s_1'\geq s_1$ and $s_1 +s_2 >0$, there exists a constant $C=C(s_1,s_1',s_2,N)$ such that
\begin{align}\label{eq:prod-es2}
  \|R( u, \nabla v)\|_{\widetilde{B}^{s_1 +s_2 -\frac{N}{2}, s_1'+ s_2 -\frac{N}{2}}} \leq C \|u\|_{\dot B^{s_2+1}_{2,1}} \|v\|_{\widetilde{B}^{s_1,s_1'}}.
\end{align}
\item For any $(s_1,s_1',s_2)\in (-\frac{N}{2},\frac{N}{2}]^3$ satisfying $s_1 + s_2 >0 $ and $s_1'\geq s_1$, there exists a constant $C=C(s_1,s_1',s_2,N)$ such that
\begin{align}\label{eq:prod-es0}
  \| u v\|_{\widetilde{B}^{s_1+s_2 -\frac{N}{2}, s_1' +s_2 -\frac{N}{2}}}
  \le C \|u\|_{\widetilde{B}^{s_1,s_1'}} \| v\|_{\dot B^{s_2}_{2,1}} .
\end{align}
\item
For any $(s_1,s_2)\in \R^2$ satisfying $-\frac{N}{2}<s_1-s_2\leq 0$, there exists a constant $C=C(s_1,s_2,N)$ such that
\begin{align}\label{eq:prod-es4}
  \|u v\|_{\widetilde{B}^{s_1,s_2}} \leq C \big( \|u\|_{\widetilde{B}^{s_1,s_2}} \|v\|_{L^\infty}
  + \|u\|_{\widetilde{B}^{\frac{N}{2}+s_1-s_2,\frac{N}{2}}} \|v\|_{\dot {B}^{s_2}_{2,1}} \big),
\end{align}
and
\begin{align}\label{eq:prod-es5}
  \|T_{\nabla v} u\|_{\widetilde{B}^{s_1,s_2}} + \|R(u,\nabla v)\|_{\widetilde{B}^{s_1,s_2}}
  \leq C  \|u\|_{\dot B^{s_2+1}_{2,1}} \|v\|_{\widetilde{B}^{\frac{N}{2} +s_1-s_2,\frac{N}{2}}}.
\end{align}
\end{enumerate}
\end{lemma}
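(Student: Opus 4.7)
The plan is to reduce everything to Bony's decomposition $uv = T_u v + T_v u + R(u,v)$ (with $u$ or $v$ replaced by $\nabla u$ or $\nabla v$ where relevant), bound each piece in $\widetilde{B}^{s_1,s_2}$, and combine. Since the hybrid norm splits at a fixed threshold $j_0$, for each paraproduct piece one tracks how contributions of input frequencies above and below $j_0$ feed into output dyadic blocks above and below $j_0$.

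For (1), use that $\dot\Delta_k(T_{\nabla u} v)$ is nonzero only for a bounded range $|j-k|\leq N_1$, and estimate
\[
\|\dot\Delta_k(T_{\nabla u} v)\|_{L^2} \lesssim \sum_{|j-k|\leq N_1} \|\dot S_{j-1}\nabla u\|_{L^\infty}\,\|\dot\Delta_j v\|_{L^2},
\]
bounding $\|\dot S_{j-1}\nabla u\|_{L^\infty}$ via Bernstein by $\sum_{j'\leq j-2} 2^{j'(1+N/2)}\|\dot\Delta_{j'} u\|_{L^2}$ and splitting the inner sum at $j_0$ to use $s_1$-regularity of $u$ below $j_0$ and $s_1'$-regularity above. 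The constraint $s_1\leq s_1'\leq \tfrac{N}{2}$ makes the resulting geometric series convergent. For (2), by contrast, $\dot\Delta_k R(u,\nabla v)$ receives contributions from all $j\geq k-3$, so Hölder and Bernstein give
\[
\|\dot\Delta_k R(u,\nabla v)\|_{L^2} \lesssim 2^{kN/2} \sum_{j\geq k-3} \|\dot\Delta_j u\|_{L^2}\,\|\widetilde{\dot\Delta}_j \nabla v\|_{L^2},
\]
and the same split-at-$j_0$ argument, together with the hypothesis $s_1+s_2>0$ that ensures summability of the tail in $j$, yields the claim after an additional geometric convolution in $k$.

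Part (3) then follows by applying (1) and (2) to each of $T_u v$, $T_v u$, and $R(u,v)$ appearing in $uv = T_u v + T_v u + R(u,v)$: the symmetric roles of $u$ and $v$ produce the two-sided constraints $s_j>-\tfrac{N}{2}$ (needed for the paraproduct sums to converge at low frequencies) and $s_j\leq \tfrac{N}{2}$ (needed for Bernstein on $\dot S_{j-1}$), while $s_1+s_2>0$ controls the remainder $R$. For (4), inequality \eqref{eq:prod-es4} exploits the trivial bound $\|\dot S_{j-1} v\|_{L^\infty}\leq \|v\|_{L^\infty}$ in the term $T_v u$, which contributes $\|u\|_{\widetilde{B}^{s_1,s_2}}\|v\|_{L^\infty}$; the remaining $T_u v$ and $R(u,v)$ are handled by the same method as (1) and (2) with the shift in regularity chosen exactly so that $u$ is measured in $\widetilde{B}^{\frac{N}{2}+s_1-s_2,\frac{N}{2}}$, which explains the second term. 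Inequality \eqref{eq:prod-es5} is proved exactly as (1) and (2) with the hybrid indices of $v$ specialized to that same $(\tfrac{N}{2}+s_1-s_2,\tfrac{N}{2})$ pattern.

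The main technical nuisance, rather than any single inequality, is the case-by-case bookkeeping across the threshold $j_0$: the output $\dot\Delta_k(\cdot)$ being low or high frequency, crossed with whether the inputs are low-low, low-high, or high-high with respect to $j_0$, produces several subcases. In each one the correct index of the hybrid norm (either $s_1$ or $s_1'$, either $s_1$ or $s_2$) must be invoked, and geometric summation in both $k$ and the inner dyadic index must close thanks precisely to the strict inequalities listed in the hypotheses. Once this bookkeeping is laid out, the estimates reduce to standard summation lemmas for $\ell^1$-convolutions of geometric sequences in $\mathbb{Z}$.
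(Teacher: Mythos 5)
Your proposal is correct and rests on the same technique as the paper: Bony decomposition, Bernstein bounds on $\dot S_{j-1}$, and geometric summation split at the threshold $j_0$, which is exactly how the paper handles part (4). The only organizational difference is that for parts (1)--(3) the paper avoids the $j_0$-bookkeeping entirely by using the identity $\widetilde{B}^{s_1,s_1'}=\dot B^{s_1}_{2,1}\cap\dot B^{s_1'}_{2,1}$ (valid since $s_1\le s_1'$, and preserved by adding $s_2-\frac{N}{2}$ to both indices), reducing each hybrid estimate to two standard non-hybrid product/paraproduct laws that it either proves in one chain of inequalities or cites from Bahouri--Chemin--Danchin; your direct hybrid bookkeeping proves the same thing at the cost of the extra case analysis you describe.
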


\begin{proof}[Proof of Lemma \ref{lemma.Bilinear}]
(1) For the proof of \eqref{eq:prod-es1}, noting that $\widetilde{B}^{s_1,s_1'} (\R^N) = \dot B^{s_1}_{2,1} \cap \dot B^{s_1'}_{2,1} (\R^N)$ for every $s_1 \leq s_1'$,
it suffices to show that for every $s_1\leq \frac{N}{2}$ and $s_2\in \R$,
\begin{align}\label{eq:prod-es1-2}
  \|T_{\nabla u} v\|_{\dot B^{s_1 + s_2 -\frac{N}{2}}_{2,1}} \leq C \|u\|_{\dot B^{s_1}_{2,1}} \|v\|_{\dot B^{s_2+1}_{2,1}},
\end{align}
with $C=C(s_1,N)>0$.
While concerning \eqref{eq:prod-es1-2}, using Bernstein's inequality we have
\begin{align*}
  \sum_{j\in\mathbb{Z}}2^{j (s_1 + s_2 -\frac{N}{2})} \|\dot \Delta_j (T_{\nabla u} v)\|_{L^2}
  & \leq \sum_{j\in\mathbb{Z}}2^{j(s_1 + s_2 -\frac{N}{2})} \sum_{|j'-j|\leq 4} \|\dot \Delta_j (\dot S_{j'-1} \nabla u\, \dot \Delta_{j'} v)\|_{L^2} \\
  & \leq C \sum_{j'\in\mathbb{Z}} 2^{j'(s_1 + s_2 -\frac{N}{2})} \|S_{j'-1} u\|_{L^\infty} 2^{j'} \|\dot \Delta_{j'} v\|_{L^2} \\
  & \leq C \sum_{j'\in\mathbb{Z}} \Big( 2^{j'(s_1-\frac{N}{2})} \sum_{k\leq j'-1} 2^{k\frac{N}{2}}\|\dot \Delta_k u\|_{L^2} \Big) 2^{j'(s_2 +1)}\|\dot\Delta_{j'} v\|_{L^2} \\
  & \leq C \|u\|_{\dot B^{s_1}_{2,1}} \|v\|_{\dot B^{s_2 +1}_{2,1}} ,
\end{align*}
as desired.

(2) The proof of \eqref{eq:prod-es2} is quite analogous with that of \eqref{eq:prod-es1} and \cite[Theorem 2.52]{bahouri2011fourier}, and we omit the details.

(3) When $s_1 = s_1' \in (-\frac{N}{2},\frac{N}{2}] $ and $s_2\in (-\frac{N}{2}, \frac{N}{2}]$, we indeed have (see \cite[Corollary 2.55]{bahouri2011fourier} for $(s_1,s_2)\in (-\frac{N}{2},\frac{N}{2})^2$
and \cite[Remark 2.48]{bahouri2011fourier} for a slight modification for the case $s_1=\frac{N}{2}$ or $s_2=\frac{N}{2}$)
\begin{align*}
  \| u v\|_{\dot B^{s_1+s_2 - N/2}_{2,1}}
  \le C \|u\|_{\dot B^{s_1}_{2,1}} \| v\|_{\dot B^{s_2}_{2,1}} .
\end{align*}
Thus \eqref{eq:prod-es0} directly follows from the above inequality.

(4) When $s_1=s_2 >0$, \eqref{eq:prod-es4} is guaranteed by the classical inequality in \cite[Corollary 2.54]{bahouri2011fourier}.
For the general case $0<s_1 < s_2$, we use Bony's decomposition \eqref{eq:Bony-dec}, and noting that for every $j\in \mathbb{Z}$,
\begin{align*}
  & \|\dot \Delta_j T_u v\|_{L^2} \leq \sum_{|j'-j|\leq 4} \|\dot \Delta_j (\dot S_{j'-1} u\, \dot \Delta_{j'}v)\|_{L^2}
  \leq C \sum_{|j'-j|\leq 4} \|\dot S_{j'-1} u\|_{L^\infty} \|\dot \Delta_{j'} v\|_{L^2}, \\
  & \|\dot \Delta_j R(u,v)\|_{L^2}\leq C \sum_{k\geq j-3} 2^{j \frac{N}{2}} \|\dot\Delta_j (\dot\Delta_k u\,\widetilde{\dot\Delta}_k v)\|_{L^1}
  \leq C \sum_{k\geq j-3} 2^{j\frac{N}{2}}\|\dot \Delta_k u\|_{L^2} \|\widetilde{\dot\Delta}_k v\|_{L^2} ,
\end{align*}
we have
\begin{align}\label{es:Tuv-1}
  \|T_u v\|_{\widetilde{B}^{s_1,s_2}} & \leq C \sum_{j\leq j_0} \sum_{|j'-j|\leq 4} 2^{j'(s_1-s_2)}\|\dot S_{j'-1} u\|_{L^\infty} 2^{j' s_2}\|\dot \Delta_{j'} v\|_{L^2} \nonumber\\
  & \quad + C \sum_{j\geq j_0+1} \sum_{|j'-j|\leq 4} \|\dot S_{j'-1} u\|_{L^\infty} 2^{j's_2} \|\dot \Delta_{j'}v\|_{L^2}
  \leq C \|u\|_{\widetilde{B}^{\frac{N}{2}+s_1-s_2,\frac{N}{2}}} \|v\|_{\dot B^{s_2}_{2,1}},
\end{align}
and
\begin{align}\label{es:Ruv-1}
  \|R(u,v)\|_{\widetilde{B}^{s_1,s_2}} & \leq C \sum_{j\leq j_0} \sum_{j-3\leq k\leq j_0} 2^{(j-k) (\frac{N}{2}+ s_1 - s_2)}
  2^{k(\frac{N}{2} + s_1 -s_2 )} \| \dot \Delta_k u\|_{L^2} 2^{k s_2} \|\widetilde{\dot \Delta}_k v\|_{L^2} \nonumber\\
  & \quad + C  \sum_{j\leq j_0} \sum_{k\geq \max\{j-3,j_0\}} 2^{(j-k) \frac{N}{2}}
  2^{k\frac{N}{2}} \| \dot \Delta_k u\|_{L^2} 2^{k s_2} \|\widetilde{\dot \Delta}_k v\|_{L^2} \nonumber \\
  & \quad + C \sum_{j\sim k\sim j_0 , k\leq j_0}
  2^{k(\frac{N}{2} + s_1 -s_2 )} \| \dot \Delta_k u\|_{L^2} 2^{k s_2} \|\widetilde{\dot \Delta}_k v\|_{L^2} \nonumber \\
  & \quad + C \sum_{j\geq j_0+1} \sum_{k\geq \max\{j-3,j_0\}} 2^{(j-k) \frac{N}{2}}
  2^{k\frac{N}{2}} \| \dot \Delta_k u\|_{L^2} 2^{k s_2} \|\widetilde{\dot \Delta}_k v\|_{L^2} \nonumber \\
  & \leq C \|u\|_{\widetilde{B}^{\frac{N}{2} +s_1 -s_2,\frac{N}{2}}} \|v\|_{\dot B^{s_2}_{2,1}} .
\end{align}
Via a direct computation we also get $\|T_v u\|_{\widetilde{B}^{s_1,s_2}} \leq C \|v\|_{L^\infty} \|u\|_{\widetilde{B}^{s_1,s_2}}$.
Hence, collecting the above estimates leads to \eqref{eq:prod-es4}.

While for \eqref{eq:prod-es5}, it can be easily obtained by arguing as the deduction in \eqref{es:Tuv-1} and \eqref{es:Ruv-1}.
\end{proof}

In the analysis of asymptotic behavior, the following weighted paraproduct and reminder estimates play an important role.
\begin{lemma}\label{lemma.bony.low.weight}
Let $j_0 \in \mathbb{Z}$, $r\in\mathbb{R}$, $f, g\in\mathcal {S}_{h}^{\prime}(\mathbb{R}^N)$ and $ \{\psi_j\}_{j\in \mathbb{Z}}$ be a positive sequence.
\begin{enumerate}[(1)]
\item For every $r_1\le 0$, we have
\begin{equation}\label{eq.weight.1}
  \sum_{j\le j_0}\psi_j2^{jr}\Vert\dot{\Delta}_j(T_{f} g)\Vert_{L^2}
  \le C \Big(\sum_{j'\le j_0+4} 2^{j'r_1}\Vert \dot \Delta_{j'} f\Vert_{L^\infty}\Big) \sum_{j\le j_0} \sum_{\vert j-j'\vert \le 4}2^{j'(r-r_1)} \Vert \dot \Delta_{j'}g \Vert_{L^2}\psi_j.
\end{equation}
\item For every $r_2\in\mathbb{R}$, we have
\begin{equation}\label{eq.weight.2}
  \sum_{j\le j_0}\psi_j2^{jr}\Vert\dot{\Delta}_j(T_f g)\Vert_{L^2}
  \le C \Big(\sup_{j'\le j_0+4}2^{j'(r-r_2)}\Vert \dot\Delta_{j'} g\Vert_{L^2}\Big) \sum_{j\le j_0}\sum_{j'\le j+4}2^{jr_2}\Vert \dot\Delta_{j'}f \Vert_{L^\infty}\psi_j.
\end{equation}
\item
If $r> - \frac{N}{2}$, $r_3\in\mathbb{R}$ and $0\le \beta\le r+\frac{N}{2}$, we have
\begin{equation}\label{eq:LP-rem}
\begin{aligned}
  \sum_{j\le j_0}\psi_j2^{jr}\Vert\dot{\Delta}_jR(f,g)\Vert_{L^2}
  \le C \Big( \sup_{j'\in\mathbb{Z}}2^{j'(r_3+\frac{N}{2}-\beta )} \Vert \dot \Delta_{j'}g \Vert_{L^2} \Big) \\
  \times\sum_{j\le j_0}\sum_{j'>j-4}2^{(j-j')(r+\frac{N}{2}-\beta )}
  &2^{j'(r-r_3)}\Vert \dot\Delta_{j'}f \Vert_{L^2}\psi_j.
\end{aligned}
\end{equation}
\end{enumerate}
\end{lemma}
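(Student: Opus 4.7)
The plan is to prove each of the three estimates by direct calculation, using Bony's decomposition together with the spectral localization properties of the Littlewood-Paley blocks and carefully placed Bernstein inequalities. The weight $\psi_j$ plays no delicate role in the argument---it is simply carried along---so the essence in each case is a manipulation of the exponents attached to $f$, $g$, and the dyadic index.

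For the paraproduct estimates (1) and (2), I would start from the frequency-localization identity
\[
\dot{\Delta}_j (T_f g) = \sum_{|j-j'|\leq 4} \dot{\Delta}_j(\dot{S}_{j'-1} f\, \dot{\Delta}_{j'} g),
\]
which yields $\|\dot{\Delta}_j T_f g\|_{L^2}\leq C \sum_{|j-j'|\leq 4}\|\dot{S}_{j'-1}f\|_{L^\infty}\|\dot{\Delta}_{j'}g\|_{L^2}$. To prove (1), I would then expand $\|\dot{S}_{j'-1}f\|_{L^\infty}\leq \sum_{k\leq j'-2}\|\dot{\Delta}_k f\|_{L^\infty}$ and insert the weight $2^{kr_1}$. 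Since $r_1\le 0$ and $k\leq j'-2\leq j_0+2$, one has $2^{-kr_1}\leq 2^{-(j'-2)r_1}$, so that
\[
\|\dot{S}_{j'-1}f\|_{L^\infty}\leq C\,2^{-j'r_1}\sum_{k\leq j_0+4}2^{kr_1}\|\dot{\Delta}_k f\|_{L^\infty}.
\]
Substituting and absorbing $2^{jr}\cdot 2^{-j'r_1}\approx 2^{j'(r-r_1)}$ (valid since $|j-j'|\leq 4$) delivers (1). For (2), rather than pulling out a low-frequency norm of $f$, I would push the $g$-factor into a supremum via $\|\dot{\Delta}_{j'}g\|_{L^2}\leq 2^{-j'(r-r_2)}\sup_{j''\leq j_0+4}2^{j''(r-r_2)}\|\dot{\Delta}_{j''}g\|_{L^2}$, so that $2^{jr}\cdot 2^{-j'(r-r_2)}$ telescopes to a clean $2^{jr_2}$ factor; the surviving $\|\dot{S}_{j'-1}f\|_{L^\infty}$ is rewritten as $\sum_{j'\leq j+4}\|\dot{\Delta}_{j'}f\|_{L^\infty}$ (enlarging $k\leq j'-2\leq j+2$).

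For the remainder estimate (3), the spectral support of $\dot{\Delta}_{j'}f\,\widetilde{\dot{\Delta}}_{j'}g$ forces only $j'\geq j-3$ to contribute to $\dot{\Delta}_j R(f,g)$. Bernstein's $L^1\to L^2$ inequality together with Cauchy-Schwarz then gives
\[
\|\dot{\Delta}_j R(f,g)\|_{L^2}\leq C\,2^{j N/2}\sum_{j'>j-4}\|\dot{\Delta}_{j'}f\|_{L^2}\|\widetilde{\dot{\Delta}}_{j'}g\|_{L^2}.
\]
I would pull out the supremum via $\|\widetilde{\dot{\Delta}}_{j'}g\|_{L^2}\leq C\,2^{-j'(r_3+N/2-\beta)}\sup_{j''}2^{j''(r_3+N/2-\beta)}\|\dot{\Delta}_{j''}g\|_{L^2}$, and then rewrite the exponent
\[
j\bigl(r+\tfrac{N}{2}\bigr) - j'\bigl(r_3+\tfrac{N}{2}-\beta\bigr) = (j-j')\bigl(r+\tfrac{N}{2}-\beta\bigr) + j'(r-r_3) + j\beta,
\]
absorbing the harmless factor $2^{j\beta}\leq 2^{j_0\beta}$ (using $\beta\geq 0$ and $j\leq j_0$) into the constant $C$. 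The condition $\beta\leq r+\tfrac{N}{2}$ ensures $r+\tfrac{N}{2}-\beta\geq 0$, so the geometric factor $2^{(j-j')(r+N/2-\beta)}$ stays bounded on the admissible range $j'>j-4$, while the hypothesis $r>-\tfrac{N}{2}$ just guarantees that this range of $\beta$ is non-degenerate.

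The one mildly non-routine point is the exponent manipulation in (3): a naive application of the Bernstein trade-off $\|\dot{\Delta}_j(\dot{\Delta}_{j'}f\,\widetilde{\dot{\Delta}}_{j'}g)\|_{L^2}\lesssim 2^{j(N/2-\beta)}2^{j'\beta}\|\dot{\Delta}_{j'}f\|_{L^2}\|\widetilde{\dot{\Delta}}_{j'}g\|_{L^2}$ introduces a spurious $2^{j'\beta}$ in the final $j'$-exponent and misses the claimed form. The remedy is to use the plain $L^1\to L^2$ Bernstein and trade the excess $j$-dependence for a bounded constant $2^{j_0\beta}$ via the low-frequency cutoff $j\leq j_0$; everything else in the three parts amounts to book-keeping.
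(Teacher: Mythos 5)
Your proposal is correct and follows essentially the same route as the paper's proof: for (1) and (2) the frequency localization of $T_fg$ plus H\"older, with the weight $2^{j'r_1}$ (resp.\ $2^{j'(r-r_2)}$) extracted from $\dot S_{j'-1}f$ (resp.\ $\dot\Delta_{j'}g$) exactly as you describe, and for (3) the plain $L^1\to L^2$ Bernstein bound $2^{jN/2}\|\dot\Delta_{j'}f\|_{L^2}\|\widetilde{\dot\Delta}_{j'}g\|_{L^2}$ followed by the same exponent rearrangement and absorption of $2^{j\beta}\le 2^{j_0\beta}$ for $j\le j_0$. No gaps.
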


\begin{proof}[Proof of Lemma \ref{lemma.bony.low.weight}]
(1) By virtue of the spectral support property, we see that
\begin{equation}\label{eq.ab.s}
\begin{aligned}
  \sum_{j\le j_0}\psi_j2^{jr}\Vert\dot{\Delta}_j(T_{f} g)\Vert_{L^2}
  & \le C\sum_{j\le j_0}\sum_{\vert j-j'\vert \le 4}\psi_j2^{jr} \Vert \dot\Delta_j( \dot{S}_{j'-1}f \, \dot \Delta_{j'} g )\Vert_{L^2} \\
  &\le C\sum_{j\le j_0}\sum_{\vert j-j'\vert \le 4}\psi_j2^{jr}\Vert\dot{S}_{j'-1}f\Vert_{L^\infty} \Vert \dot \Delta_{j'} g \Vert_{L^2}.
\end{aligned}
\end{equation}
Plugging the definition of $\dot{S}_{j'-1}f$ to \eqref{eq.ab.s}, we deduce that for every $r_1\le 0$,
\begin{equation*}
\begin{aligned}
  \sum_{j\le j_0}\psi_j2^{jr}\Vert\dot{\Delta}_j(T_{f} g)\Vert_{L^2} & \le C \Big(\sup_{j'\le j_0+4} 2^{j'r_1} \Vert \dot S_{j'-1} f\Vert_{L^\infty} \Big)
  \sum_{j\le j_0}\sum_{\vert j-j'\vert \le 4}\psi_j2^{j'(r-r_1)} \Vert \dot\Delta_{j'} g \Vert_{L^2} \\
  & \leq C\Big(\sum_{j'\le j_0+4} 2^{j'r_1}\Vert \dot \Delta_{j'} f\Vert_{L^\infty}\Big) \sum_{j\le j_0} \sum_{\vert j-j'\vert \le 4}2^{j'(r-r_1)} \Vert \dot \Delta_{j'}g \Vert_{L^2}\psi_j.
\end{aligned}
\end{equation*}

(2) The inequality \eqref{eq.weight.2} for every $r_2\in \mathbb{R}$ can be deduced in the same manner.

(3) By using the spectral property of the dyadic operators, we get
\begin{equation*}
  \sum_{j\le j_0}\psi_j2^{jr}\Vert\dot{\Delta}_jR(f,g)\Vert_{L^2}
  \le C\sum_{j\le j_0}\sum_{j'>j-3}\psi_j2^{j(r+\frac{N}{2}-\beta )} \Vert\widetilde{\dot{\Delta}}_{j'}f\Vert_{L^2} \Vert \dot\Delta_{j'}g \Vert_{L^2},
\end{equation*}
where we have used the fact that $2^{j\beta}\le C$ for every $j\le j_0$.
Consequently, the desired inequality \eqref{eq:LP-rem} directly follows.
\end{proof}

Let us state a continuity result for the composition in Besov space (for the proof one can see Proposition 1.5.13, Corollary 1.4.9 of \cite{danchin2005fourier} and Proposition A.3 of \cite{danchin2017optimal}).
\begin{lemma}\label{lemma.composition}
Let $s\in \R $ and $(p, r)\in[1,\infty]^2$ be such that
\begin{align*}
  0< s<\frac{N}{p},\quad \text{or}\quad s=\frac{N}{p} \text{ and } r=1.
\end{align*}
Let $I$ be an open interval of $\R$, and let $f: I\rightarrow \R$ satisfy $f(0)=0$ and $f',f''\in W^{[s]+1,\infty}(I;\mathbb{R})$.
Assume that $u,v\in \dot B^s_{p,r}\cap L^\infty(\R^N)$ has values in $J \subset I$.
Then the function $f(u)$ belongs to $\dot B^s_{p,r}(\R^N)$, and there exists a constant $C=C(s,I,J,N)$ such that
\begin{equation}\label{eq:compest1}
  \|f(u)\|_{\dot{B}^s_{p,r}} \le C (1 + \|u\|_{L^{\infty}})^{[s]+1} \|f'\|_{W^{[s]+1,\infty}(I)}
  \|u\|_{\dot{B}^s_{p,r}},	
\end{equation}
and
\begin{align}\label{eq:compest2}
  \|f(v) -f(u)& \|_{\dot{B}^s_{p,r}} \leq C (1+\|v\|_{L^\infty})^{[s]+1} \|f''\|_{W^{[s]+1,\infty}(I)}  \\
  & \quad \cdot \Big(\|v-u\|_{\dot{B}^s_{p,r}} \sup_{\tau\in[0,1]} \|u+\tau(v-u)\|_{L^\infty}
  + \|v-u\|_{L^\infty} \sup_{\tau\in [0,1]} \|u+\tau(v-u)\|_{\dot{B}^s_{p,r}}\Big),	
\end{align}
where $[s]$ denotes the integer part of $s$.

In the case $s>-\frac{N}{p}$ then $u\in \dot{B}^{s}_{p,r}\cap\dot{B}^{\frac{N}{p}}_{p,1}$ implies that
\begin{align}\label{eq:comp-Bes1}
  \|f(u)\|_{\dot{B}^s_{p,r}} \le C (1 + \|u\|_{\dot{B}^{\frac{N}{p}}_{p,1}})
  \|u\|_{\dot{B}^s_{p,r}}.
\end{align}
\end{lemma}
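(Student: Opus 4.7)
My plan is to follow the classical paralinearization strategy developed in \cite{danchin2005fourier}. For the first estimate \eqref{eq:compest1} in the regime $0 < s < \frac{N}{p}$ (or $s = \frac{N}{p}$, $r = 1$), the starting point is the Meyer-type telescoping identity
$$f(u) = \sum_{j\in \mathbb{Z}} \bigl(f(\dot S_{j+1} u) - f(\dot S_j u)\bigr) = \sum_{j\in \mathbb{Z}} m_j\, \dot\Delta_j u, \qquad m_j(x) := \int_0^1 f'\bigl(\dot S_j u + \tau \dot\Delta_j u\bigr)\, \dd\tau,$$
which converges because $f(0) = 0$ and the embedding $\dot B^s_{p,r} \hookrightarrow L^\infty$ forces $\dot S_j u \to 0$ as $j \to -\infty$. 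I would then apply $\dot\Delta_k$ and split the sum into the high-frequency part $j \geq k-1$ — handled by almost orthogonality and the uniform bound $\|m_j\|_{L^\infty} \leq \|f'\|_{L^\infty(I)}$ — and the low-frequency part $j \leq k-2$, handled by Bernstein's inequality combined with the chain-rule bound $\|\partial^\alpha m_j\|_{L^\infty} \lesssim \|f'\|_{W^{[s]+1,\infty}(I)}(1+\|u\|_{L^\infty})^{[s]+1}\, 2^{j|\alpha|}$ valid for $|\alpha| \leq [s]+1$. Summing against the weight $\{2^{ks}\}_k$ in $\ell^r$ yields \eqref{eq:compest1}.

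For the difference bound \eqref{eq:compest2}, I would start from the Taylor-type identity
$$f(v) - f(u) = (v-u)\int_0^1 f'\bigl(u + \tau(v-u)\bigr)\, \dd\tau$$
and apply the standard product rule $\|fg\|_{\dot B^s_{p,r}} \lesssim \|f\|_{L^\infty}\|g\|_{\dot B^s_{p,r}} + \|g\|_{L^\infty}\|f\|_{\dot B^s_{p,r}}$ in the same range of $(s,p,r)$. The $L^\infty$-norm of the integrated $f'$ is bounded by $\|f'\|_{L^\infty(I)}$, while its $\dot B^s_{p,r}$-norm is controlled by feeding $f' - f'(0)$ and the convex combination $u + \tau(v-u)$ into \eqref{eq:compest1} and integrating in $\tau$; this produces the prefactor $(1+\|v\|_{L^\infty})^{[s]+1}\|f''\|_{W^{[s]+1,\infty}(I)}$ together with $\sup_\tau\|u + \tau(v-u)\|_{\dot B^s_{p,r}}$, as required.

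For the negative-index assertion \eqref{eq:comp-Bes1} with $s > -\frac{N}{p}$, the plan is to invoke Bony's paralinearization decomposition $f(u) = T_{f'(u)} u + R_f(u)$, where $R_f(u)$ belongs to a strictly smoother Besov space than $u$ itself. Since $\dot B^{N/p}_{p,1}$ is a Banach algebra continuously embedded in $L^\infty$, the hypothesis $u \in \dot B^{N/p}_{p,1}$ guarantees that $f'(u)$ is a well-defined element of $L^\infty \cap \dot B^{N/p}_{p,1}$ with norm bounded by $1 + \|u\|_{\dot B^{N/p}_{p,1}}$ (via the first half of the lemma applied to $f' - f'(0)$). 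The paraproduct $T_{f'(u)} u$ is then bounded for every $s \in \R$ by $\|f'(u)\|_{L^\infty}\|u\|_{\dot B^s_{p,r}}$, and the remainder $R_f(u)$ is absorbed by paralinearization estimates; the condition $s > -\frac{N}{p}$ is precisely what ensures convergence of the low-frequency sums.

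The main technical obstacle I anticipate lies at the endpoints. Carrying $\ell^r$-summability through the telescoping argument when $s = \frac{N}{p}$ and $r = 1$ requires tracking discrete convolutions of the relevant dyadic sequences without any logarithmic loss. In parallel, for \eqref{eq:comp-Bes1} one must pinpoint precisely in which Besov space the paralinearization remainder $R_f(u)$ lands so that the final estimate closes with the \emph{linear} dependence on $\|u\|_{\dot B^s_{p,r}}$ stated in the lemma rather than a higher power; this is where the smoothing properties of the paralinearization operator, together with the algebra property of $\dot B^{N/p}_{p,1}$, play an essential role.
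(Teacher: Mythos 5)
The paper does not prove this lemma itself; it points to Proposition 1.5.13 and Corollary 1.4.9 of Danchin's lecture notes and Proposition A.3 of \cite{danchin2017optimal}. Your plan for \eqref{eq:compest1} and \eqref{eq:compest2} — the Meyer telescoping series $f(u)=\sum_j m_j\,\dot\Delta_j u$ with the high/low frequency split and the chain-rule bounds on $\partial^\beta m_j$ up to order $[s]+1$, followed by the Taylor identity $f(v)-f(u)=(v-u)\int_0^1 f'(u+\tau(v-u))\,\dd\tau$ and the product law — is exactly the argument of those references, so those two parts are fine.

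For \eqref{eq:comp-Bes1} your route genuinely diverges from the cited one, and as written it does not close. The off-the-shelf paralinearization theorems place the remainder $R_f(u)=f(u)-T_{f'(u)}u$ in a space determined by a \emph{single} regularity index of $u$ (e.g.\ $u\in\dot B^{N/p}_{p,1}$ gives $R_f(u)$ in a space of regularity $2\cdot\frac{N}{p}-\frac{N}{p}=\frac{N}{p}$, with a bound quadratic in $\|u\|_{\dot B^{N/p}_{p,1}}$); since homogeneous Besov spaces do not embed downward, this says nothing about $R_f(u)\in\dot B^{s}_{p,r}$ for $s<\frac{N}{p}$, let alone with the linear dependence on $\|u\|_{\dot B^{s}_{p,r}}$ that the lemma asserts. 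To make your route work you would have to redo the remainder analysis with two regularity indices (measuring one factor in $\dot B^{N/p}_{p,1}$ and the other in $\dot B^{s}_{p,r}$, which is where $s+\frac{N}{p}>0$ enters) — you correctly flag this as the open point but do not supply it. The cited proof avoids the issue entirely: write $f(u)=u\,g(u)$ with $g(y):=\int_0^1 f'(\tau y)\,\dd\tau$, bound $\|g(u)\|_{L^\infty}\le\|f'\|_{L^\infty}$ and $\|g(u)-g(0)\|_{\dot B^{N/p}_{p,1}}\lesssim\|u\|_{\dot B^{N/p}_{p,1}}$ by the already-proved \eqref{eq:compest1}, and invoke the product law $\|uv\|_{\dot B^{s}_{p,r}}\lesssim\|u\|_{\dot B^{s}_{p,r}}\big(\|v\|_{L^\infty}+\|v\|_{\dot B^{N/p}_{p,1}}\big)$, valid precisely for $-\frac{N}{p}<s\le\frac{N}{p}$; linearity in $\|u\|_{\dot B^{s}_{p,r}}$ then comes for free. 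I would either adopt that factorization or carry out the two-index remainder estimate explicitly.
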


\begin{remark}\label{rmk:compos}
  After some trivial modification, the above composition inequalities can be adapted to the hybrid Besov space $\widetilde{B}^{s_1,s_2}$ with $0<s_1,s_2\leq \frac{N}{2}$.
\end{remark}

The result below is useful in the existence part.
\begin{lemma}[{\cite[Proposition 2.93]{bahouri2011fourier}}]\label{lem:Bes-K}
  Let $s>0$, $(p,r)\in [1,\infty]^2$ and $K$ be a compact set of $\R^N$.
Assume that $u\in \dot B^s_{p,r}(\R^N)$ is a tempered distribution with the support included in $K$.
Then $u\in B^s_{p,r}(\R^N)$ (the usual nonhomogeneous Besov space) and there exists a universal constant $C>0$ such that
\begin{align*}
  \|u\|_{B^s_{p,r}(\R^N)} \leq C (1+ |K|)^{\frac{s}{N}} \|u\|_{\dot B^s_{p,r}(\R^N)}.
\end{align*}
\end{lemma}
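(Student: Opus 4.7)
The plan is to reduce the statement to an $L^p$-estimate on $u$ itself, and then pay the factor $|K|^{s/N}$ by combining the compact support of $u$ with a Sobolev--Besov embedding.

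An equivalent norm on $B^s_{p,r}$ is $\|S_0 u\|_{L^p}+\bigl\|\{2^{js}\|\dot\Delta_j u\|_{L^p}\}_{j\geq 0}\bigr\|_{\ell^r}$. For $j\geq 0$ the nonhomogeneous blocks coincide with $\dot\Delta_j u$, so the high-frequency tail is automatically dominated by $\|u\|_{\dot B^s_{p,r}}$. The low-frequency cutoff $S_0=\chi(D)$ has $\mathcal{F}^{-1}\chi\in L^1$, so Young's inequality gives $\|S_0 u\|_{L^p}\lesssim \|u\|_{L^p}$. Hence it suffices to show
\[
\|u\|_{L^p(\mathbb{R}^N)}\leq C(1+|K|)^{s/N}\|u\|_{\dot B^s_{p,r}}.
\]
Since $\mathrm{supp}\,u\subset K$, H\"older's inequality yields $\|u\|_{L^p(\mathbb{R}^N)}=\|u\|_{L^p(K)}\leq |K|^{1/p-1/q}\|u\|_{L^q}$ for any $q\geq p$. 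For $s<N/p$, I would pick $q$ with $\tfrac{1}{q}=\tfrac{1}{p}-\tfrac{s}{N}$, so that the exponent becomes exactly $s/N$, and apply the homogeneous Besov--Sobolev embedding $\dot B^s_{p,r}\hookrightarrow L^q$. For $s\geq N/p$ I would use the embedding $\dot B^s_{p,r}\hookrightarrow L^\infty$ together with the cruder bound $\|u\|_{L^p}\leq |K|^{1/p}\|u\|_{L^\infty}$; since $\tfrac{1}{p}\leq \tfrac{s}{N}$ in this regime, $|K|^{1/p}\lesssim (1+|K|)^{s/N}$ once $|K|\geq 1$, while for $|K|<1$ the right-hand side is bounded below by a constant.

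The main obstacle is that the homogeneous Besov--Sobolev embeddings above are sensitive to the fine index $r$ (they require $r$-restrictions or the borderline condition $r=1$). A clean workaround is to argue directly dyadically: split the low-frequency sum $S_0 u=\sum_{j\leq -1}\dot\Delta_j u$ at a threshold $J$ with $2^{-J}\approx (1+|K|)^{1/N}$. For $J<j\leq -1$ the triangle inequality, followed by H\"older in $j$ against $\{2^{js}\|\dot\Delta_j u\|_{L^p}\}\in\ell^r$, produces the desired factor $2^{-Js}\lesssim (1+|K|)^{s/N}$. For the very low frequencies $j\leq J$, writing $u=\tilde\chi u$ for a smooth cutoff equal to $1$ on $K$ and exploiting that $\dot\Delta_j$ is a convolution with a Schwartz function concentrated at scale $2^{-j}\gtrsim (1+|K|)^{1/N}$, one obtains via Bernstein and the compactness of the support of $u$ bounds whose sum in $j\leq J$ is again controlled by $(1+|K|)^{s/N}\|u\|_{\dot B^s_{p,r}}$. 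Combining the two pieces concludes the proof.
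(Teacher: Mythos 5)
First, note that the paper does not prove this lemma at all: it is quoted verbatim from \cite[Proposition 2.93]{bahouri2011fourier}, so there is no in-paper argument to compare against. Judged on its own, your plan has the right overall architecture (reduce to an $L^p$ bound on $u$, then split frequencies at a threshold $2^{-J}\sim (1+|K|)^{1/N}$), and the reduction step and the mid-frequency sum $J<j\le -1$ are fine: there $\sum_{J<j\le-1}\|\dot\Delta_j u\|_{L^p}\le \big(\sum_{J<j\le-1}2^{-js r'}\big)^{1/r'}\|u\|_{\dot B^s_{p,r}}\lesssim 2^{-Js}\|u\|_{\dot B^s_{p,r}}$ since $s>0$.

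The genuine gap is the regime $j\le J$, which is exactly where the whole difficulty of the lemma sits, and your proposal only asserts the conclusion there ("one obtains via Bernstein and the compactness of the support \dots bounds whose sum \dots is again controlled by $(1+|K|)^{s/N}\|u\|_{\dot B^s_{p,r}}$") without giving a mechanism. The obvious candidates all fail or are circular as stated: (i) the triangle inequality against $\{2^{js}\|\dot\Delta_ju\|_{L^p}\}\in\ell^r$ produces $\sum_{j\le J}2^{-js}=\infty$ because $s>0$ makes the geometric series diverge toward $j\to-\infty$; (ii) Young's inequality with the compact support, $\|\dot\Delta_j u\|_{L^p}\le \|2^{jN}h(2^j\cdot)\|_{L^p}\|u\|_{L^1}\le C2^{jN/p'}|K|^{1/p'}\|u\|_{L^p}$, bounds the low frequencies by the very quantity $\|u\|_{L^p}$ you are trying to estimate, so it only closes via an absorption argument that presupposes $\|u\|_{L^p}<\infty$ qualitatively (part of what is being proved) and degenerates when $p=1$ (no gain in $j$); (iii) the Bernstein route $\|\dot S_J u\|_{L^\infty}\le C\sum_{j<J}2^{j(N/p-s)}\,2^{js}\|\dot\Delta_j u\|_{L^p}$ followed by $\|u\|_{L^p(K)}\le |K|^{1/p}\|\dot S_Ju\|_{L^\infty}+\|(\mathrm{Id}-\dot S_J)u\|_{L^p}$ does work and yields exactly $(1+|K|)^{s/N}$ after optimizing $J$ — but only when $s<N/p$, since the sum diverges otherwise. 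Relatedly, your fallback for $s\ge N/p$ via "$\dot B^s_{p,r}\hookrightarrow L^\infty$" is not available: the homogeneous norm does not control $\|u\|_{L^\infty}$ (the low-frequency block sums are unbounded, and the embedding fails at $s=N/p$ unless $r=1$). So the proof is incomplete precisely at its load-bearing step; to finish it you need to commit to one of the mechanisms above (e.g.\ the restriction-to-$K$ plus Bernstein argument for $s<N/p$, supplemented by an absorption or moment-cancellation argument in the remaining range), rather than invoking "Bernstein and compact support" generically.
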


The following three lemmas are concerned with the commutator estimates.
\begin{lemma}\label{corollary.commutator_S_j}
Let $\alpha >1$. Then the following inequality holds true:
\begin{equation}\label{eq:com-es1}
  \Vert[\Lambda^{\alpha-1},\dot S_{j-1}v\cdot\nabla]\dot{\Delta}_j \sigma\Vert_{L^2}
  \le C 2^{j(\alpha-1)}\Vert \nabla v\Vert_{L^{\infty}}\Vert \dot{\Delta}_j \sigma\Vert_{L^2}.
\end{equation}
\end{lemma}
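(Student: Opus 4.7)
The plan is to reduce the commutator to an explicit convolution-integral representation via frequency localization, and then bound it using a first-order Taylor expansion together with Young's inequality and Bernstein's inequality. This is the standard route for Kato--Ponce type commutator estimates, and since $\Lambda^{\alpha-1}$ is a smooth Fourier multiplier away from the origin, it behaves well on frequency-localized functions.

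\emph{Step 1: Kernel representation of $\Lambda^{\alpha-1}$.} Since $\dot S_{j-1}v$ has Fourier support in a ball $\{|\xi|\lesssim 2^{j-1}\}$ and $\nabla\dot\Delta_j\sigma$ in the annulus $\{|\xi|\sim 2^j\}$, a simple sum-set computation shows that both $\nabla\dot\Delta_j\sigma$ and the product $\dot S_{j-1}v\cdot\nabla\dot\Delta_j\sigma$ have Fourier support in a common fixed annulus $\{c_1 2^j\leq|\xi|\leq c_2 2^j\}$ (independent of $j$). Choose $\chi\in C_c^\infty(\R^N\setminus\{0\})$ with $\chi\equiv 1$ on $\{c_1\leq|\eta|\leq c_2\}$, set $\psi(\eta):=|\eta|^{\alpha-1}\chi(\eta)\in\mathcal{S}(\R^N)$ and $h:=\mathcal{F}^{-1}\psi\in\mathcal{S}(\R^N)$. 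Then for any $f$ Fourier-supported in $\{c_1 2^j\leq|\xi|\leq c_2 2^j\}$,
\[
  \Lambda^{\alpha-1}f = 2^{j(\alpha-1)}\,h_j\ast f,\qquad h_j(x):=2^{jN}h(2^j x).
\]
Applying this identity to both $\nabla\dot\Delta_j\sigma$ and $\dot S_{j-1}v\cdot\nabla\dot\Delta_j\sigma$ produces the pointwise formula
\[
  [\Lambda^{\alpha-1},\dot S_{j-1}v\cdot\nabla]\dot\Delta_j\sigma(x) = 2^{j(\alpha-1)}\int_{\R^N} h_j(x-y)\bigl(\dot S_{j-1}v(y)-\dot S_{j-1}v(x)\bigr)\cdot\nabla\dot\Delta_j\sigma(y)\,\dd y.
\]

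\emph{Step 2: Taylor expansion combined with Young and Bernstein.} The mean-value theorem gives $|\dot S_{j-1}v(y)-\dot S_{j-1}v(x)|\leq |x-y|\,\|\nabla \dot S_{j-1}v\|_{L^\infty}\leq |x-y|\,\|\nabla v\|_{L^\infty}$. Young's convolution inequality ($L^1\ast L^2\hookrightarrow L^2$) then yields
\[
  \bigl\|[\Lambda^{\alpha-1},\dot S_{j-1}v\cdot\nabla]\dot\Delta_j\sigma\bigr\|_{L^2}
  \leq 2^{j(\alpha-1)}\|\nabla v\|_{L^\infty}\,\|\nabla\dot\Delta_j\sigma\|_{L^2}\int_{\R^N}|z|\,|h_j(z)|\,\dd z.
\]
A change of variables together with the Schwartz decay of $h$ gives $\int|z|\,|h_j(z)|\,\dd z=2^{-j}\int|y|\,|h(y)|\,\dd y\leq C\,2^{-j}$, while Bernstein's inequality yields $\|\nabla\dot\Delta_j\sigma\|_{L^2}\leq C\,2^j\,\|\dot\Delta_j\sigma\|_{L^2}$. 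The two powers $2^{\pm j}$ cancel, leaving the asserted bound \eqref{eq:com-es1}.

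The argument is essentially routine; the only mild subtlety is arranging the spectral cutoff $\chi$ so that the convolution representation of $\Lambda^{\alpha-1}$ applies simultaneously to $\nabla\dot\Delta_j\sigma$ and to its product with the low-frequency factor $\dot S_{j-1}v$. Once this is in place, the estimate follows from a single Taylor expansion combined with Young's and Bernstein's inequalities.
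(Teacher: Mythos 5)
Your proof is correct and follows essentially the same route as the paper's: both represent $\Lambda^{\alpha-1}$ on the relevant frequency band by a rescaled Schwartz kernel (the paper via $\sum_{|j'-j|\le 4}\Lambda^{\alpha-1}\dot\Delta_{j'}$, you via a single cutoff adapted to the fixed annulus containing both $\nabla\dot\Delta_j\sigma$ and its product with $\dot S_{j-1}v$), extract the difference $\dot S_{j-1}v(y)-\dot S_{j-1}v(x)$, and conclude by the mean value theorem, Young's convolution inequality, and Bernstein. Your write-up just spells out the final estimates that the paper leaves implicit.
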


\begin{proof}[Proof of Lemma \ref{corollary.commutator_S_j}]
Taking advantage of the spectrum support property, we see that
\begin{align*}
  &\quad [\Lambda^{\alpha-1},\dot S_{j-1}v\cdot\nabla]\dot{\Delta}_j \sigma= \sum_{\vert j-j'\vert\le4}[\Lambda^{\alpha-1}\dot{\Delta}_{j'},\dot{S}_{j-1}v\cdot\nabla]\dot{\Delta}_j \sigma \\
  & = \sum_{|j'-j|\leq 4} 2^{j'(N+\alpha-1)} \int_{\R^N} \widetilde{h}(2^{j'}y) \big(\dot S_{j-1}v(x-y) -\dot S_{j-1}v(x)\big)\cdot \nabla \dot\Delta_j \sigma(x-y) \dd y,
\end{align*}
where $\Lambda^{\alpha-1}\dot\Delta_{j'} = 2^{j'(N+\alpha -1)} \widetilde{h}(2^{j'}x)*$ and $\widetilde{h} = \mathcal{F}^{-1}(|\xi|^{\alpha-1} \varphi(\xi))\in \mathcal{S}(\R^N)$.
Hence, combined with H\"older's and Bernstein's inequalities, the desired inequality \eqref{eq:com-es1} easily follows.
\end{proof}

\begin{lemma}[{\cite[Lemma 2.100 and Remark 2.103]{bahouri2011fourier}}]\label{lemma.Remark2.103}
Let $-\frac{N}{2}< s\le \frac{N}{2}$.
There exists a constant $C$ such that
\begin{equation*}
\begin{aligned}
  \Big{\Vert} \Big(2^{js}\Vert\dot{S}_{j-1}v\cdot\nabla\dot{\Delta}_j f-\dot{\Delta}_j(v\cdot\nabla f)\Vert_{L^2}\Big)_j
  \Big{\Vert}_{\ell^{1}}\le C\Vert \nabla v\Vert_{\dot{B}^{\frac{N}{2}}_{2,1}}\Vert f\Vert_{\dot{B}^s_{2,1}}.
\end{aligned}
\end{equation*}
\end{lemma}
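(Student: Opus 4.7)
The natural strategy is to exploit Bony's paraproduct decomposition to separate the product $v\cdot\nabla f$ into three pieces that behave very differently with respect to frequency localization, then bound each piece in the weighted $\ell^1(\mathbb{Z})$-$L^2$ norm by $\|\nabla v\|_{\dot B^{N/2}_{2,1}}\|f\|_{\dot B^s_{2,1}}$. Writing
\begin{equation*}
v^{i}\partial_{i} f = T_{v^{i}}\partial_{i} f + T_{\partial_{i} f} v^{i} + R(v^{i},\partial_{i} f),
\end{equation*}
I decompose the target quantity $R_j := \dot S_{j-1}v\cdot\nabla\dot\Delta_j f - \dot\Delta_j(v\cdot\nabla f)$ as $R_j^{1}+R_j^{2}+R_j^{3}$, where $R_j^{1}$ collects the paraproduct $T_{v^i}\partial_i f$ minus the main diagonal contribution $\dot S_{j-1}v^{i}\partial_{i}\dot\Delta_j f$, and $R_j^{2},R_j^{3}$ are the other two paraproducts. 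The regularity restriction $-\tfrac{N}{2}<s\le\tfrac{N}{2}$ will enter through summability requirements on these three pieces.

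\paragraph{Step 1 (commutator piece).} For $R_j^{1}$ I rewrite
\begin{equation*}
R_j^{1} = \sum_{|k-j|\le 4}\bigl[\dot\Delta_j,\dot S_{k-1}v^{i}\bigr]\partial_{i}\dot\Delta_k f
\;+\;\sum_{|k-j|\le 1}\bigl(\dot S_{k-1}-\dot S_{j-1}\bigr)v^{i}\,\partial_{i}\dot\Delta_j\dot\Delta_k f,
\end{equation*}
using $\dot\Delta_j\dot\Delta_k=0$ when $|j-k|\ge 2$. Using the convolution kernel representation $\dot\Delta_j = 2^{jN}h(2^{j}\cdot)\ast$ and a first-order Taylor expansion on $\dot S_{k-1}v^{i}(y)-\dot S_{k-1}v^{i}(x)=(y-x)\cdot\int_0^1\nabla\dot S_{k-1}v^{i}(x+\tau(y-x))\,\dd\tau$ produces the crucial $2^{-j}$ gain:
\begin{equation*}
\bigl\|[\dot\Delta_j,\dot S_{k-1}v^{i}]g\bigr\|_{L^2} \le C\,2^{-j}\|\nabla \dot S_{k-1}v\|_{L^{\infty}}\|g\|_{L^{2}}.
\end{equation*}
Combined with $\|\partial_{i}\dot\Delta_k f\|_{L^{2}}\le C2^{k}\|\dot\Delta_k f\|_{L^{2}}$ and $|k-j|\le 4$, this yields $\|R_j^{1}\|_{L^2}\lesssim\|\nabla v\|_{L^{\infty}}\sum_{|k-j|\le 4}\|\dot\Delta_k f\|_{L^{2}}$. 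Multiplying by $2^{js}$ and summing in $j$ gives the desired bound (valid for all $s\in\mathbb{R}$), after invoking the embedding $\dot B^{N/2}_{2,1}\hookrightarrow L^{\infty}$ for $\nabla v$.

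\paragraph{Step 2 (low-high paraproduct $R_j^{2}$).} Here I use the straightforward bound
\begin{equation*}
\|\dot\Delta_j T_{\partial_{i}f}v^{i}\|_{L^{2}} \le C\sum_{|k-j|\le 4}\|\dot S_{k-1}\partial_{i}f\|_{L^{\infty}}\|\dot\Delta_k v\|_{L^{2}},
\end{equation*}
and then estimate $\|\dot S_{k-1}\partial_{i}f\|_{L^{\infty}}\le\sum_{k'\le k-2}2^{k'(1+\frac{N}{2})}\|\dot\Delta_{k'}f\|_{L^2}$ by Bernstein. Introducing the $\ell^1$ sequences $a_{k'}=2^{k's}\|\dot\Delta_{k'}f\|_{L^{2}}$ and $b_k=2^{k(1+\frac{N}{2})}\|\dot\Delta_k v\|_{L^{2}}$, the resulting sum reorganizes to $\sum_k b_k\sum_{k'\le k-2}2^{(k'-k)(1+\frac{N}{2}-s)}a_{k'}$. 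The restriction $s\le\tfrac{N}{2}$ makes the convolution kernel $2^{(k'-k)(1+\frac{N}{2}-s)}\mathbf{1}_{k'\le k-2}$ summable, so Young's inequality gives the bound $\|b\|_{\ell^{1}}\|a\|_{\ell^{1}}=\|\nabla v\|_{\dot B^{N/2}_{2,1}}\|f\|_{\dot B^{s}_{2,1}}$.

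\paragraph{Step 3 (remainder $R_j^{3}$) and assembly.} Using the spectral localization property, $\dot\Delta_j R(v^{i},\partial_{i}f)=\sum_{k\ge j-3}\dot\Delta_j(\dot\Delta_k v^{i}\widetilde{\dot\Delta}_k\partial_{i}f)$, and applying Bernstein from $L^{1}\to L^{2}$:
\begin{equation*}
\|\dot\Delta_j(\dot\Delta_k v^{i}\widetilde{\dot\Delta}_k\partial_{i}f)\|_{L^{2}} \le C\,2^{j\frac{N}{2}}\|\dot\Delta_k v\|_{L^{2}}\,2^{k}\|\widetilde{\dot\Delta}_k f\|_{L^{2}}.
\end{equation*}
After multiplying by $2^{js}$, the factor $2^{(j-k)(s+\frac{N}{2})}$ appears with $j-k\le 3$; the hypothesis $s>-\tfrac{N}{2}$ makes this kernel summable, and Young's inequality again closes the estimate as $\|\nabla v\|_{\dot B^{N/2}_{2,1}}\|f\|_{\dot B^{s}_{2,1}}$. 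Summing the three contributions concludes the proof. The main technical obstacle is ensuring the endpoint $s=\tfrac{N}{2}$ in Step 2 does not suffer a logarithmic loss; this is handled by the careful $\ell^{1}\ast\ell^{1}\hookrightarrow\ell^{1}$ bookkeeping above rather than by an $\ell^{\infty}\cdot\ell^{1}$ pairing, and by invoking Bernstein's inequality with the exact exponent $1+\tfrac{N}{2}$ (which converts a $L^{\infty}$ norm of a low-frequency truncation into the $\ell^{1}$-summable Besov norm of $\nabla v$).
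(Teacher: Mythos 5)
Your argument is correct and is essentially the standard proof of this commutator estimate: the paper itself gives no proof (it imports the result from Bahouri--Chemin--Danchin, Lemma 2.100 and Remark 2.103), and your Bony decomposition into a genuine commutator piece (handled by the kernel/Taylor argument giving the $2^{-j}$ gain and the embedding $\dot B^{N/2}_{2,1}\hookrightarrow L^\infty$ for $\nabla v$), a $T_{\nabla f}v$ piece (using $s\le \tfrac N2$), and a remainder piece (using $s>-\tfrac N2$) is exactly the route taken there. The summability bookkeeping at both endpoints is handled correctly, so no changes are needed.
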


\begin{lemma}[{\cite[Lemma 10.25]{bahouri2011fourier}}]\label{lemma.commutator_estimate}
Let $m\in\mathbb{R}$, and let $A(D)$ be a smooth homogeneous multiplier of degree $m$.
There exists a constant $C = C(m,A,N)$ such that for all $p \in [1,\infty]$ the following inequality holds true:
\begin{equation*}
  \|\dot{S}_{j-1}a\dot{\Delta}_jA(D)b  - A(D)\dot{\Delta}_jT_{a} b \|_{L^p}
  \le C2^{j(m-1)}\sum_{j',j''=j-4}^{j+4}\big(\Vert \nabla \dot{\Delta}_{j''}a\Vert_{L^{\infty}}+\Vert \nabla \dot{S}_{j-1}a\Vert_{L^{\infty}}\big)
  \Vert \dot{\Delta}_{j'}b\Vert_{L^p}.
\end{equation*}
\end{lemma}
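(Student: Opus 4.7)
\textbf{Proof plan for Lemma \ref{lemma.commutator_estimate}.}
I would combine the kernel representation of the frequency-localized multiplier with the standard mean-value trick for commutators, together with a Bernstein inequality to harvest the $2^{j(m-1)}$ gain. Set $\psi_j := \dot\Delta_j A(D) = A(D)\dot\Delta_j$; since $A$ is homogeneous of degree $m$ and $\varphi$ is supported in an annulus, one has $\psi_j f = h_j * f$ with
\[
  h_j(x) = 2^{j(N+m)}\,h_0(2^j x), \qquad h_0 := \mathcal{F}^{-1}(A\,\varphi)\in \mathcal{S}(\R^N).
\]
The two scaling identities I would use are $\|h_j\|_{L^1}\lesssim 2^{jm}$ and $\int|y|\,|h_j(y)|\,dy \lesssim 2^{j(m-1)}$. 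Simultaneously, Young's inequality yields the operator bound $\|\psi_j\|_{L^p\to L^p}\lesssim 2^{jm}$ for every $p\in [1,\infty]$.

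Next, I would reduce the left-hand side to finitely many summands using spectral localization. By the almost-orthogonality of the paraproduct, $\psi_j T_a b = \sum_{|j'-j|\leq 4}\psi_j(\dot S_{j'-1}a\cdot\dot\Delta_{j'}b)$, while the identity $\psi_j\dot\Delta_{j'}=0$ for $|j'-j|\geq 2$ lets me write $\dot S_{j-1}a\cdot\psi_j b = \sum_{|j'-j|\leq 4}\dot S_{j-1}a\cdot\psi_j\dot\Delta_{j'}b$ (the added terms vanish). Adding and subtracting $\psi_j(\dot S_{j-1}a\cdot\dot\Delta_{j'}b)$ inside each summand then splits the difference as
\[
  \dot S_{j-1}a\cdot\psi_j\dot\Delta_{j'}b - \psi_j(\dot S_{j'-1}a\cdot\dot\Delta_{j'}b)
  = [\dot S_{j-1}a,\psi_j]\dot\Delta_{j'}b + \psi_j\bigl[(\dot S_{j-1}a-\dot S_{j'-1}a)\cdot\dot\Delta_{j'}b\bigr].
\]

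For the genuine commutator I would use the kernel representation
\[
  [\dot S_{j-1}a,\psi_j]\dot\Delta_{j'}b(x)=\int h_j(y)\,\bigl(\dot S_{j-1}a(x)-\dot S_{j-1}a(x-y)\bigr)\dot\Delta_{j'}b(x-y)\,dy,
\]
apply the mean-value inequality $|\dot S_{j-1}a(x)-\dot S_{j-1}a(x-y)|\leq |y|\,\|\nabla \dot S_{j-1}a\|_{L^\infty}$, and Young's inequality with the first-moment bound on $h_j$, producing exactly the factor $C\,2^{j(m-1)}\|\nabla \dot S_{j-1}a\|_{L^\infty}\|\dot\Delta_{j'}b\|_{L^p}$. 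For the second piece, $\dot S_{j-1}a-\dot S_{j'-1}a$ equals a signed sum of $\dot\Delta_{j''}a$ over $j''$ in a bounded neighbourhood of $j$ (since $|j'-j|\leq 4$); I would bound $\psi_j$ on $L^p$ by $C\,2^{jm}$ and apply Bernstein to convert $\|\dot\Delta_{j''}a\|_{L^\infty}$ into $2^{-j''}\|\nabla\dot\Delta_{j''}a\|_{L^\infty}\sim 2^{-j}\|\nabla\dot\Delta_{j''}a\|_{L^\infty}$, yielding the same $2^{j(m-1)}$ gain. Summing over the finitely many pairs $(j',j'')\in\{j-4,\ldots,j+4\}^2$ gives the stated bound.

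The main (very mild) obstacle is simply making sure that each of the two bracketed terms spends exactly one derivative on $a$ and keeps exactly one factor of $2^{-j}$: this is forced by the $|y|$-weight in the first term and by the Bernstein loss in the second. No cancellation specific to the commutator $[\Lambda^\alpha,\cdot]$ (as used in Lemma \ref{corollary.commutator_S_j}) is required here because the homogeneity of $A$ is already incorporated into the scaling of $h_j$.
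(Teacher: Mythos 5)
Your argument is correct and is essentially the standard proof of this estimate from the cited reference (the paper itself does not reprove the lemma, it only quotes \cite[Lemma 10.25]{bahouri2011fourier}): kernel representation $\psi_j=h_j\ast$ with $\|h_j\|_{L^1}\lesssim 2^{jm}$ and $\int|y|\,|h_j(y)|\,\dd y\lesssim 2^{j(m-1)}$, first-order Taylor on $\dot S_{j-1}a$ for the genuine commutator, and reverse Bernstein on the annulus-supported blocks $\dot\Delta_{j''}a$ for the remainder $\dot S_{j-1}a-\dot S_{j'-1}a$. The only (harmless) bookkeeping caveat is that when $j'=j-4$ the difference $\dot S_{j-1}a-\dot S_{j'-1}a$ involves $\dot\Delta_{j''}a$ with $j''$ as low as $j-5$, so the index window in the final sum should be read as a fixed finite neighbourhood of $j$ rather than literally $\{j-4,\dots,j+4\}$.
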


\subsection{Fractional Laplacian and Kato-Ponce type inequality}
\begin{definition}[Fractional Laplacian]\label{def:fraLap}
For every $u$ belonging to the Schwartz class $\mathcal{S}(\R^N)$, the fractional Laplacian operator $\Lambda^\alpha$ with $0<\alpha<2$ is defined as
\begin{align}\label{def:Lam-alp}
  {\Lambda}^\alpha u(x)=( - \Delta)^{\frac{\alpha}{2}} u (x) = c_{\alpha,N}\, \mathrm{p.v.} \int_{\mathbb{R}^N} \frac{u(x)-u(y)}{|x-y|^{N+\alpha}} dy,
\end{align}
with $c_{\alpha,N} = \frac{2^\alpha  \Gamma ( N/2 + \alpha/2 )}{\pi^{N/2} \Gamma (- \alpha /2 ) }$.
\end{definition}

We first recall the following Kato-Ponce inequality associated with the fractional Laplacian operator $\Lambda^\alpha$,
of which proof can be found in Li's paper \cite[Theorem 1.2]{li2019kato}. 
\begin{lemma}\label{lemma.kato_ponce}
Let $\alpha>0$, $\alpha_1,\alpha_2\ge0$ be with $\alpha_1+\alpha_2=\alpha$, and let $1<p, p_1, p_2 <\infty$ be satisfying $\frac{1}{p_1} + \frac{1}{p_2} =\frac{1}{p}$. Then for every $u,v\in\mathcal{S}(\mathbb{R}^N)$, we have
\begin{equation}\label{eq:Li-KPin1}
\begin{aligned}
  \Big\| \Lambda^\alpha(uv)- \sum_{|k|< \alpha_1} \frac{1}{k!} \partial^k u \,\Lambda^{\alpha,k} v - \sum_{|m| \le \alpha_2}\frac{1}{m!} \partial^{m} v\, \Lambda^{\alpha,m} u \Big\|_{L^p}  \lesssim_{\alpha,\alpha_1,\alpha_2,p,N} \| \Lambda^{\alpha_1} u \|_{L^p} \| \Lambda^{\alpha_2} v \|_{\operatorname{BMO}},
\end{aligned}
\end{equation}
and
\begin{equation}\label{eq:Li-KPin2}
\begin{aligned}
  \Big\| \Lambda^{\alpha}(uv)- \sum_{|k|\le \alpha_1} \frac{1}{k!} \partial^k u\, \Lambda^{\alpha,k} v - \sum_{|m| \le \alpha_2}
  \frac{1}{m!} \partial^{m} v \,\Lambda^{\alpha,m} u \Big\|_{L^p}  \lesssim_{\alpha,\alpha_1,\alpha_2,p,p_1,p_2,N} \| \Lambda^{\alpha_1} u \|_{L^{p_1}} \| \Lambda^{\alpha_2} v \|_{L^{p_2}},
\end{aligned}
\end{equation}
where $k=(k_1,\cdots,k_N)$, $m=(m_1,\cdots,m_N)$, $\widehat{\Lambda^{\alpha,k} v} (\xi) = i^{-|k|} \partial_{\xi}^k (|\xi|^\alpha) \widehat v(\xi)$
and the $\operatorname{BMO}$ semi-norm is given by
$ \Vert u\Vert_{\operatorname{BMO}}=\Vert (\sum\limits_{j\in\mathbb{Z}}\vert \dot{\Delta}_j u\vert^2)^{\frac{1}{2}}\Vert_{L^{\infty}}$.
\end{lemma}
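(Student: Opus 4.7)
The plan is to recover the estimates \eqref{eq:Li-KPin1}--\eqref{eq:Li-KPin2} from a dyadic symbolic calculus argument, reducing to a commutator-type expansion via Bony's paraproduct decomposition \eqref{eq:Bony-dec}. Writing $uv = T_u v + T_v u + R(u,v)$, I would treat $\Lambda^\alpha T_u v$ and $\Lambda^\alpha T_v u$ by the same mechanism with the roles of the two functions swapped, and handle $\Lambda^\alpha R(u,v)$ separately.

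The first and main step is the following observation. On each paraproduct block $\dot{S}_{j-1}u\cdot\dot{\Delta}_j v$ the low-frequency factor $\dot S_{j-1}u$ has spectrum in a ball of radius $\sim 2^j$ that is disjoint from an annulus slightly larger than that of $\dot\Delta_j v$. Hence one may Taylor expand the multiplier symbol $|\xi+\eta|^\alpha$ in $\eta$ around the high-frequency variable $\xi$, which on the physical side amounts to the identity
\begin{equation*}
\Lambda^\alpha\bigl(\dot S_{j-1}u\cdot\dot\Delta_j v\bigr) = \sum_{|k|\leq M} \tfrac{1}{k!}\,\partial^k \dot S_{j-1}u\cdot \Lambda^{\alpha,k}\dot\Delta_j v + R^{M}_{j}(u,v),
\end{equation*}
with remainder $R^{M}_j$ controlled by $\|\nabla^{M+1}\dot S_{j-1} u\|_{L^{p_1}}$ times a block of $\Lambda^{\alpha-M-1}v$ in $L^{p_2}$ (or the analogous $L^\infty$/BMO quantity). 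Choosing $M = \lceil \alpha_1\rceil - 1$ or $M=\lfloor\alpha_1\rfloor$ matches the retained Taylor sum with the terms subtracted on the left-hand side of \eqref{eq:Li-KPin1} or \eqref{eq:Li-KPin2} respectively; summing over $j$ then reconstructs the $T_u v$ contribution.

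For the bilinear $L^{p_1}\times L^{p_2}$ bound \eqref{eq:Li-KPin2} with $1<p,p_1,p_2<\infty$, I would apply the Littlewood--Paley square-function characterization of $L^p$ to the dyadic sum in $j$, pair the maximal function of $\Lambda^{\alpha_1}u$ with the square function of $\Lambda^{\alpha_2}v$ via H\"older's inequality, and use the vector-valued Fefferman--Stein maximal inequality to absorb the Littlewood--Paley projections. For the endpoint \eqref{eq:Li-KPin1} the $v$-factor sits in BMO, and I would rely on the square-function representation $\|\Lambda^{\alpha_2} v\|_{\operatorname{BMO}}\sim \|(\sum_j |\dot\Delta_j \Lambda^{\alpha_2} v|^2)^{1/2}\|_{L^\infty}$ recorded in the statement, together with a $H^1$--BMO duality/Carleson-measure argument, to sum the block estimates. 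The symmetric piece $\Lambda^\alpha T_v u$ is handled in the same way, and $\Lambda^\alpha R(u,v)$ by noting that on remainder blocks the two factors carry comparable frequencies, so Taylor expanding around either variable produces an extra decaying gap in $j$ that can be summed.

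The hard part is the endpoint \eqref{eq:Li-KPin1}: the naive pointwise bound on $R^M_j$ only gives $L^\infty$ control on the $\Lambda^{\alpha_2} v$ factor, and replacing it by BMO requires the more delicate John--Nirenberg / Carleson-measure machinery. A secondary subtlety is that when $\alpha_1$ or $\alpha_2$ is a positive integer one must be careful about whether the corresponding boundary term $|k|=\alpha_1$ is included; this is exactly what distinguishes the two statements. Since these endpoint and boundary issues are resolved by a rather careful analysis, I would ultimately invoke Li \cite[Theorem 1.2]{li2019kato}, where precisely these inequalities are established, to conclude.
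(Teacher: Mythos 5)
Your proposal ultimately rests on the same justification as the paper: the paper offers no independent proof of this lemma and simply cites Li \cite[Theorem 1.2]{li2019kato}, which is exactly where you land after your (reasonable) sketch of the paraproduct/Taylor-expansion strategy. So the approaches coincide, and the citation is the correct and sufficient resolution.
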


A consequence of Lemma \ref{lemma.kato_ponce} is the following commutator estimate. 
\begin{lemma}\label{cor.commu.est}
Let $1< \alpha < 2$, $1<p <\infty$ and $p_1, p_2 \in [p, \infty]$ be satisfying $\frac{1}{p_1} + \frac{1}{p_2} =\frac{1}{p}$. Then we have
\begin{equation}\label{eq:Commu-est-Lp}
\begin{aligned}
  \Vert \Lambda^{\alpha}(uv)- u\Lambda^{\alpha}v \Vert_{L^p}
  \lesssim_{\alpha,N} \Vert\nabla u \Vert_{L^{p_1}} \Vert \Lambda^{\alpha-1}v \Vert_{L^{p_2}}+\Vert \Lambda^{\alpha}u \Vert_{L^{p_1}} \Vert v \Vert_{L^{p_2}}.
\end{aligned}
\end{equation}
\end{lemma}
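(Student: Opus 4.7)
The plan is to deduce \eqref{eq:Commu-est-Lp} directly from the extended Kato-Ponce formulas \eqref{eq:Li-KPin1}--\eqref{eq:Li-KPin2}, by a judicious choice of $\alpha_1,\alpha_2$ tailored to $1<\alpha<2$ and by splitting off the endpoint cases where one of $p_1,p_2$ equals $\infty$ (since \eqref{eq:Li-KPin2} is only stated for finite exponents, while \eqref{eq:Li-KPin1} is available up to BMO).

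In the bulk case $p_1,p_2\in(1,\infty)$, I would apply \eqref{eq:Li-KPin2} with $\alpha_1=\alpha$ and $\alpha_2=0$. Since $1<\alpha<2$, the index $|k|\le\alpha$ ranges only over $\{0,1\}$: the $k=0$ term is precisely $u\Lambda^\alpha v$, the $|k|=1$ contributions have the form $\partial_ju\cdot\Lambda^{\alpha,e_j}v$, and the single $|m|\le 0$ term is $v\Lambda^\alpha u$. Rearranging and applying H\"older gives
\[
 \|\Lambda^\alpha(uv)-u\Lambda^\alpha v\|_{L^p}
 \lesssim \|\nabla u\|_{L^{p_1}}\,\max_j\|\Lambda^{\alpha,e_j}v\|_{L^{p_2}}
 + \|v\|_{L^{p_2}}\|\Lambda^\alpha u\|_{L^{p_1}}.
\]
Because the symbol $-i\alpha|\xi|^{\alpha-2}\xi_j$ of $\Lambda^{\alpha,e_j}$ factors (up to a constant) as $R_j\Lambda^{\alpha-1}$ with $R_j$ a Riesz transform, the Mikhlin--H\"ormander theorem yields $\|\Lambda^{\alpha,e_j}v\|_{L^{p_2}}\lesssim\|\Lambda^{\alpha-1}v\|_{L^{p_2}}$ for $1<p_2<\infty$, closing this case.

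For the endpoint $p_2=\infty$ (so $p_1=p$), I would invoke \eqref{eq:Li-KPin1} with $\alpha_1=1$, $\alpha_2=\alpha-1\in(0,1)$. The strict condition $|k|<1$ forces $k=0$ and the condition $|m|\le\alpha-1$ forces $m=0$, so only the two main terms $u\Lambda^\alpha v$ and $v\Lambda^\alpha u$ survive, and
\[
 \|\Lambda^\alpha(uv)-u\Lambda^\alpha v-v\Lambda^\alpha u\|_{L^p}
 \lesssim \|\nabla u\|_{L^p}\,\|\Lambda^{\alpha-1}v\|_{\operatorname{BMO}}.
\]
The bound then follows from $\|\cdot\|_{\operatorname{BMO}}\lesssim\|\cdot\|_{L^\infty}$ together with H\"older's inequality on $v\Lambda^\alpha u$. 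The symmetric endpoint $p_1=\infty$ is handled by applying \eqref{eq:Li-KPin1} to $vu$ with the roles of the two factors swapped and choosing $\alpha_1=\alpha-1$, $\alpha_2=1$; now $|m|\le 1$ produces an extra correction $\sum_j\partial_j u\,\Lambda^{\alpha,e_j}v$, which is absorbed by the $L^p$-boundedness of $\Lambda^{\alpha,e_j}$ as in the bulk case.

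The main obstacle is the treatment of the endpoint exponents $p_1=\infty$ or $p_2=\infty$, where the bilinear $L^{p_1}\cdot L^{p_2}$ inequality \eqref{eq:Li-KPin2} is not available. The key observation that makes the argument go through is that the complementary inequality \eqref{eq:Li-KPin1} comes with a BMO norm on the right-hand side, which is dominated by the $L^\infty$ norm appearing in \eqref{eq:Commu-est-Lp}, and that the asymmetric choice of $\alpha_1$ versus $\alpha_2$ in \eqref{eq:Li-KPin1} can be compensated by swapping the roles of $u$ and $v$ in its statement.
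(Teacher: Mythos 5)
Your proof is correct and follows essentially the same route as the paper: both deduce the estimate from Lemma \ref{lemma.kato_ponce} by choosing $(\alpha_1,\alpha_2)$ so that, for $1<\alpha<2$, only the $k=0$ and $|k|=1$ Taylor corrections survive, both handle the endpoint exponents via the BMO-version \eqref{eq:Li-KPin1} (swapping the roles of $u$ and $v$ when $p_1=\infty$), and both absorb the first-order correction $\partial_j u\,\Lambda^{\alpha,e_j}v$ by Calder\'on--Zygmund/Riesz-transform boundedness. Your choice $(\alpha_1,\alpha_2)=(\alpha,0)$ in the interior case, versus the paper's $(1,\alpha-1)$, is only a cosmetic difference; if anything you are more careful, since the paper's displayed application of \eqref{eq:Li-KPin2} silently omits the $|k|=1$ term that its own choice of indices also produces.
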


\begin{proof}[Proof of Lemma \ref{cor.commu.est}]
  For every $p<p_2\le \infty$, by taking $\alpha_1=1$ and $\alpha_2=\alpha-1$, the inequalities \eqref{eq:Li-KPin1}-\eqref{eq:Li-KPin2} become
\begin{align*}
  \|\Lambda^\alpha(uv) - u \Lambda^\alpha v -v\Lambda^\alpha u \|_{L^p} \lesssim_{\alpha,N} 
  \|\Lambda u\|_{L^{p_1}} \|\Lambda^{\alpha-1} v\|_{L^{p_2}} \lesssim_{\alpha,N} \|\nabla u\|_{L^{p_1}} \|\Lambda^{\alpha-1} v\|_{L^{p_2}},
\end{align*}
thus the desired estimate \eqref{eq:Commu-est-Lp} follows from the triangle inequality and H\"older's inequality.
Similarly, by taking $\alpha_1=\alpha-1$ and $\alpha_2=1$ and switching $u,v$ in \eqref{eq:Li-KPin1},
and using the Calder\'on-Zygmund theorem of singular integral operator,
we obtain
\begin{align*}
  \|\Lambda^\alpha(uv) - v \Lambda^\alpha u - u \Lambda^\alpha v - \partial u \, \Lambda^{\alpha,1}v\|_{L^p} \lesssim_{\alpha,N}
  \|\Lambda u\|_{\mathrm{BMO}} \|\Lambda^{\alpha-1} v\|_{L^p} \lesssim_{\alpha,N} \|\nabla u\|_{L^\infty} \|\Lambda^{\alpha-1}v\|_{L^p},
\end{align*}
which leads to \eqref{eq:Commu-est-Lp} in the case $p_2=p$.
\end{proof}

Now, we present a useful commutator estimate in Besov space as follows.
\begin{lemma}\label{cor.K-P.Besov}
Let $1< \alpha < 2$, $s>-\frac{N}{2}$ and $s_1,s_2\ge 0$. Then we have
\begin{equation}\label{eq:Commu-est-Besov}
\begin{aligned}
  \Vert \Lambda^{\alpha}(uv)- u\Lambda^{\alpha}v \Vert_{\dot{B}^{s}_{2,1}}
  \lesssim_{\alpha,N,s,s_1,s_2} \Vert u \Vert_{\dot{B}^{\frac{N}{2}+1-s_1}_{2,1}} \Vert v \Vert_{\dot{B}^{s+s_1+\alpha-1}_{2,1}}
  +\Vert u \Vert_{\dot{B}^{s+s_2+\alpha}_{2,1}} \Vert v \Vert_{\dot{B}^{\frac{N}{2}-s_2}_{2,1}}.
\end{aligned}
\end{equation}
\end{lemma}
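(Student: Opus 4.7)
The plan is to combine Bony's paraproduct decomposition with the block-wise commutator estimate of Lemma \ref{lemma.commutator_estimate}, so that only the ``symmetric'' paraproduct piece $[\Lambda^\alpha, T_u]v$ benefits from the commutator gain of one derivative (producing the first term on the right-hand side), while the remaining four pieces are estimated individually by standard paraproduct and remainder bounds (each producing the second term on the right-hand side).

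Applying \eqref{eq:Bony-dec} to both $uv$ and $u\Lambda^\alpha v$ and subtracting, I get
\begin{equation*}
  [\Lambda^\alpha,u]v
  =[\Lambda^\alpha,T_u]v
  +\bigl(\Lambda^\alpha T_v u - T_{\Lambda^\alpha v}u\bigr)
  +\bigl(\Lambda^\alpha R(u,v)-R(u,\Lambda^\alpha v)\bigr)
  =: I_1+I_2+I_3.
\end{equation*}
For $I_1=\sum_j[\Lambda^\alpha,\dot S_{j-1}u]\dot\Delta_jv$, each summand is spectrally localized in an annulus of size $2^j$; applying Lemma \ref{lemma.commutator_estimate} with $A(D)=\Lambda^\alpha$ (of degree $m=\alpha$) yields
\begin{equation*}
  \|\dot\Delta_j I_1\|_{L^2}\lesssim 2^{j(\alpha-1)}\!\!\sum_{|j'-j|\le 4}\bigl(\|\nabla\dot S_{j'-1}u\|_{L^\infty}+\|\nabla\dot\Delta_{j'}u\|_{L^\infty}\bigr)\|\dot\Delta_{j'}v\|_{L^2}.
\end{equation*}
Using Bernstein to write $\|\nabla\dot S_{j-1}u\|_{L^\infty}\le\sum_{k\le j-2}2^{k(1+N/2-s_1)}\cdot 2^{ks_1}\|\dot\Delta_ku\|_{L^2}$, multiplying by $2^{js}$, summing in $j$, and swapping the order of summation produces a sum of the form $\sum_{k,\,j\ge k+2}b_kc_j\,2^{(k-j)s_1}$ with $b_k=2^{k(1+N/2-s_1)}\|\dot\Delta_ku\|_{L^2}$ (whose $\ell^1$ norm equals $\|u\|_{\dot B^{N/2+1-s_1}_{2,1}}$) and $c_j=2^{j(s+\alpha-1+s_1)}\|\dot\Delta_jv\|_{L^2}$ (whose $\ell^1$ norm equals $\|v\|_{\dot B^{s+s_1+\alpha-1}_{2,1}}$). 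Since $s_1\ge 0$, the kernel $2^{(k-j)s_1}$ is bounded by $1$ on $\{j\ge k\}$, so the double sum is dominated by $\|b\|_{\ell^1}\|c\|_{\ell^1}$, giving the first term on the right-hand side of \eqref{eq:Commu-est-Besov}.

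For $I_2$ I treat the two pieces separately. Each block $\dot\Delta_j\Lambda^\alpha T_v u$ (respectively $\dot\Delta_j T_{\Lambda^\alpha v}u$) is bounded by $2^{j\alpha}\sum_{|j'-j|\le 4}\|\dot S_{j'-1}v\|_{L^\infty}\|\dot\Delta_{j'}u\|_{L^2}$ (respectively $\sum_{|j'-j|\le 4}\|\dot S_{j'-1}\Lambda^\alpha v\|_{L^\infty}\|\dot\Delta_{j'}u\|_{L^2}$). The split $\|\dot S_{j-1}\Lambda^{\alpha'}v\|_{L^\infty}\le\sum_{k\le j-2}2^{k(\alpha'+N/2-s_2)}\cdot 2^{ks_2}\|\dot\Delta_kv\|_{L^2}$ for $\alpha'\in\{0,\alpha\}$, combined with the same Young-type convolution argument as in Step~2 (now using $s_2\ge 0$ and $s_2+\alpha\ge 0$), yields the bound $\|u\|_{\dot B^{s+\alpha+s_2}_{2,1}}\|v\|_{\dot B^{N/2-s_2}_{2,1}}$. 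For $I_3$ I apply the standard Besov-space remainder estimate (Theorem 2.52 of \cite{bahouri2011fourier}): for $\|\Lambda^\alpha R(u,v)\|_{\dot B^s_{2,1}}=\|R(u,v)\|_{\dot B^{s+\alpha}_{2,1}}$, choosing $\sigma_1=s+\alpha+s_2$, $\sigma_2=\frac{N}{2}-s_2$ gives $\sigma_1+\sigma_2=s+\alpha+\frac{N}{2}>0$ (since $s>-\frac{N}{2}$ and $\alpha>1$), producing the second term; the estimate $\|R(u,\Lambda^\alpha v)\|_{\dot B^s_{2,1}}$ is handled identically after shifting $\sigma_2$ by $-\alpha$, the condition $\sigma_1+\sigma_2>0$ then reducing to $s+\frac{N}{2}>0$.

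The only genuinely delicate point is the commutator gain for $I_1$: one must extract the factor $2^{j(\alpha-1)}\|\nabla u\|$ rather than $2^{j\alpha}\|u\|$, which would give the wrong right-hand side when $s_1=0$. This is precisely the content of Lemma \ref{lemma.commutator_estimate}. The remaining work is routine Littlewood--Paley bookkeeping, where the assumptions $s_1,s_2\ge 0$ are used only to ensure that the relevant geometric kernels are summable.
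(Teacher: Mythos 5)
Your proof is correct, and it uses the same architecture as the paper's: Bony's decomposition \eqref{eq:Bony-dec} into the three pieces $[\Lambda^\alpha,T_u]v$, $\Lambda^\alpha T_vu-T_{\Lambda^\alpha v}u$ and $\Lambda^\alpha R(u,v)-R(u,\Lambda^\alpha v)$, with the commutator gain needed only on the first piece and the last two estimated termwise (your $I_3$ treatment is essentially identical to the paper's \eqref{eq:KP-Besov-R}). The one substantive difference is the tool for $I_1$: the paper applies the Kato--Ponce-type commutator bound of Lemma \ref{cor.commu.est} (derived from Lemma \ref{lemma.kato_ponce}) blockwise, obtaining $\Vert\Lambda^\alpha\dot S_{j-1}u\Vert_{L^\infty}\Vert\dot\Delta_jv\Vert_{L^2}+\Vert\nabla\dot S_{j-1}u\Vert_{L^\infty}\Vert\Lambda^{\alpha-1}\dot\Delta_jv\Vert_{L^2}$, whereas you invoke the homogeneous-multiplier commutator Lemma \ref{lemma.commutator_estimate} with $A(D)=\Lambda^\alpha$, $m=\alpha$, extracting $2^{j(\alpha-1)}\Vert\nabla\dot S_{j-1}u\Vert_{L^\infty}\Vert\dot\Delta_jv\Vert_{L^2}$. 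Both yield the same bound $\Vert u\Vert_{\dot B^{N/2+1-s_1}_{2,1}}\Vert v\Vert_{\dot B^{s+s_1+\alpha-1}_{2,1}}$ after the Bernstein/Young bookkeeping (your use of $s_1\ge0$ to bound the kernel $2^{(k-j)s_1}$ is exactly where the paper uses $\alpha-1+s_1>0$); your route has the minor advantage of bypassing the fractional Leibniz machinery entirely, and your observation that $I_2$ can be estimated without exploiting its commutator structure (since each of $\Lambda^\alpha T_vu$ and $T_{\Lambda^\alpha v}u$ separately obeys the second right-hand bound) is also valid, though the paper does use the commutator form there via Lemma \ref{cor.commu.est}.
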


\begin{proof}[Proof of Lemma \ref{cor.K-P.Besov}]
  We here apply Lemma \ref{cor.commu.est} to show \eqref{eq:Commu-est-Besov}.
Using Bony's decomposition \eqref{eq:Bony-dec} leads to
\begin{align*}
  \Lambda^{\alpha}(uv)-  u\Lambda^{\alpha}v =\big(\Lambda^{\alpha}T_{u}v -T_{u}(\Lambda^{\alpha}v)\big)+(\Lambda^{\alpha}T_{v}u - T_{\Lambda^{\alpha}v}u)
  + \big(\Lambda^{\alpha}R(u,v)-R(u,\Lambda^{\alpha}v)\big)=: \sum_{j=1}^3 \Pi_j.
\end{align*}
In light of Lemma \ref{cor.commu.est} and the spectrum support property of dyadic operators, we get
\begin{align}\label{eq:KP-Besov-T1}
  \Vert \Pi_1 \Vert_{\dot{B}^{s}_{2,1}}
  &\lesssim \sum_{j\in\mathbb{Z}}2^{js}\Vert \Lambda^{\alpha}(\dot{S}_{j-1}u\dot{\Delta}_j v)-\dot{S}_{j-1}u\, \Lambda^{\alpha}\dot{\Delta}_jv \Vert_{L^2} \nonumber\\
  &\lesssim \sum_{j\in\mathbb{Z}}2^{js}\big(\Vert \Lambda^\alpha \dot{S}_{j-1}u\Vert_{L^{\infty}} \Vert\dot{\Delta}_j v\Vert_{L^2} +
  \Vert \nabla \dot{S}_{j-1}u\Vert_{L^{\infty}}\Vert\Lambda^{\alpha-1}\dot{\Delta}_j v\Vert_{L^2}\big) \nonumber \\
  & \lesssim \sum_{j\in \mathbb{Z}} \Big(2^{-js_1}\sum_{j'\leq j-1} 2^{j'(\frac{N}{2}+1)} \|\dot \Delta_{j'} u\|_{L^2} \Big)
  2^{j(s +s_2+\alpha-1)} \|\dot \Delta_j v\|_{L^2} \nonumber \\
  &\lesssim \Vert u \Vert_{\dot{B}^{\frac{N}{2}+1-s_1}_{2,1}} \Vert v \Vert_{\dot{B}^{s+s_1+\alpha-1}_{2,1}}.
\end{align}
Similarly, we have
\begin{align}\label{eq:KP-Besov-T2}
  \Vert \Pi_2 \Vert_{\dot{B}^{s}_{2,1}}
  &\lesssim \sum_{j\in\mathbb{Z}}2^{js}\Vert \Lambda^{\alpha}(\dot{S}_{j-1}v\dot{\Delta}_j u)-\Lambda^{\alpha}(\dot{S}_{j-1}v) \, \dot{\Delta}_ju \Vert_{L^2} \nonumber\\
  &\lesssim \sum_{j\in\mathbb{Z}}2^{js}\big(\Vert \Lambda^{\alpha}\dot{\Delta}_ju\Vert_{L^2} \Vert\dot{S}_{j-1} v\Vert_{L^{\infty}}
  + \Vert \nabla \dot{\Delta}_ju\Vert_{L^2} \Vert\Lambda^{\alpha-1}\dot{S}_{j-1} v\Vert_{L^{\infty}}\big) \nonumber \\
  &\lesssim \Vert u \Vert_{\dot{B}^{\alpha+s+s_2}_{2,1}} \Vert v \Vert_{\dot{B}^{\frac{N}{2}-s_2}_{2,1}}.
\end{align}
For the term $\Pi_3$, we do not need to use the commutator structure, and we infer that for every $s>-\frac{N}{2}$,
\begin{align}\label{eq:KP-Besov-R}
  \Vert \Pi_3 \Vert_{\dot{B}^{s}_{2,1}}
  & \lesssim \sum_{j\in\mathbb{Z}} 2^{js} \big(2^{j\alpha} \|\dot \Delta_j R(u,v)  \|_{L^2} + \|\dot \Delta_j R(u,\Lambda^\alpha v)\|_{L^2} \big) \nonumber \\
  &\lesssim \sum_{j\in\mathbb{Z}}\sum_{k> j-3}2^{j(s+\frac{N}{2})} \big(
  2^{j\alpha} \Vert \dot \Delta_j(\dot{\Delta}_k u \widetilde{\dot{\Delta}}_k v) \Vert_{L^1} +
  \Vert \dot \Delta_j (\dot{\Delta}_k u\,\Lambda^\alpha(\widetilde{\dot{\Delta}}_k v)) \Vert_{L^1} \big) \nonumber \\
  &\lesssim \sum_{j\in\mathbb{Z}}\sum_{k > j-3} 2^{(j-k)(s+\frac{N}{2})}
  2^{k(s+s_2+\alpha)}\Vert\dot{\Delta}_k u\Vert_{L^2} 2^{k (\frac{N}{2} -s_2)}\Vert\widetilde{\dot{\Delta}}_k v\Vert_{L^2}\nonumber \\
  &\lesssim \Vert u \Vert_{\dot{B}^{s +s_2+\alpha}_{2,1}} \Vert v \Vert_{\dot{B}^{\frac{N}{2}-s_2}_{2,1}}.
\end{align}
Combining these inequalities \eqref{eq:KP-Besov-T2}--\eqref{eq:KP-Besov-R} completes the proof of this lemma.
\end{proof}

As a direct consequence of Lemma \ref{cor.K-P.Besov} with $s=\frac{N}{2}+1-r$, $s_1=r_1-\alpha$ and $s_2=r-r_1$, we have the following commutator estimate.
\begin{corollary}\label{corollary.kato_ponce}
Let $1<\alpha<2$ and $\alpha\le r_1\le r< N+1$. Then the following inequality holds true:
\begin{equation}\label{eq:KP-ine1}
\begin{aligned}
  \Vert \Lambda^\alpha(uv)- u\Lambda^\alpha v\Vert_{\dot{B}^{\frac{N}{2}+1-r}_{2,1}}  \lesssim_{\alpha,r,r_1,N} \Vert u \Vert_{\dot{B}^{\frac{N}{2}+1+\alpha-r_1}_{2,1}}\Vert v \Vert_{\dot{B}^{\frac{N}{2}+r_1-r}_{2,1}}.
\end{aligned}
\end{equation}
\end{corollary}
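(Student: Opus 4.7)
The plan is to obtain this commutator estimate as an immediate algebraic consequence of Lemma \ref{cor.K-P.Besov}, exactly as the statement advertises. First, I would set
\[
s = \tfrac{N}{2}+1-r,\qquad s_1 = r_1 - \alpha,\qquad s_2 = r - r_1,
\]
and verify that these choices fall within the hypotheses of Lemma \ref{cor.K-P.Besov}. The constraint $s>-\frac{N}{2}$ is equivalent to $r< N+1$, which is assumed; the constraints $s_1\geq 0$ and $s_2\geq 0$ follow from $\alpha\leq r_1$ and $r_1\leq r$ respectively.

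Next I would substitute into the right-hand side of \eqref{eq:Commu-est-Besov} and simplify the Besov indices. A short computation shows
\[
\tfrac{N}{2}+1-s_1 = \tfrac{N}{2}+1+\alpha-r_1,\qquad s+s_1+\alpha-1 = \tfrac{N}{2}+r_1-r,
\]
and symmetrically
\[
s+s_2+\alpha = \tfrac{N}{2}+1+\alpha-r_1,\qquad \tfrac{N}{2}-s_2 = \tfrac{N}{2}+r_1-r.
\]
Hence the two summands on the right-hand side of Lemma \ref{cor.K-P.Besov} coincide, collapsing to the single product $\|u\|_{\dot B^{N/2+1+\alpha-r_1}_{2,1}}\|v\|_{\dot B^{N/2+r_1-r}_{2,1}}$, which is precisely \eqref{eq:KP-ine1}.

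Since all the substantive work (Bony decomposition of the commutator, handling of the paraproduct and remainder pieces using the Kato--Ponce inequality of Lemma \ref{cor.commu.est}) has already been carried out in Lemma \ref{cor.K-P.Besov}, the only task here is this index bookkeeping. There is therefore no genuine obstacle to overcome; the role of the corollary is merely to repackage Lemma \ref{cor.K-P.Besov} in the symmetric form that will be most convenient in the later \textit{a priori} estimates for the Euler--alignment system.
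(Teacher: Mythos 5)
Your proposal is correct and is exactly the paper's argument: the paper derives Corollary \ref{corollary.kato_ponce} precisely by applying Lemma \ref{cor.K-P.Besov} with $s=\frac{N}{2}+1-r$, $s_1=r_1-\alpha$, $s_2=r-r_1$, and your verification of the hypotheses ($r<N+1$, $r_1\ge\alpha$, $r\ge r_1$) and of the index bookkeeping that collapses the two summands into one is accurate.
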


\begin{remark}
  It is worth mentioning that Danchin et al in \cite[Lemma 3.1]{danchin2019regular} established a related commutator estimate (with different Besov index),
and they mainly use the intrinsic definition of Besov space.
\end{remark}

\section{A priori estimates}\label{priori}
This section is devoted to establishing \emph{a priori} estimates for the Euler-alignment system \eqref{eq.EA}, with isothermal or isentropic pressures \eqref{eq:pressure}, and strongly singular alignment interactions \eqref{eq:alignment}.

Let us begin with rewriting the system into a more treatable form
by introducing a new quantity
\begin{equation}\label{eq.rho.sigma}
  \sigma :=
\begin{cases}
  \frac{\sqrt{\kappa\gamma}}{\gamma-1}(\rho^{\gamma-1}-1),\ &\gamma> 1,\\
  \sqrt{\kappa} \ln \rho ,\ &\gamma=1.
\end{cases}
\end{equation}
It is easy to see that $\rho = 1 + h(\sigma)$ with
\begin{equation}\label{eq:h}
h(\sigma) :=
\begin{cases}
  \left(\frac{\gamma-1}{\sqrt{\kappa\gamma}}\sigma+1\right)^{\frac{1}{\gamma-1}}-1,\ &\gamma>1,\\
  e^{\sigma/\sqrt{\kappa}}-1,\ &\gamma=1.
\end{cases}
\end{equation}
Consequently, $(\sigma,u)$ satisfies the following coupled system
\begin{equation}\label{eq.EAsigma.nl}
\begin{cases}
\partial_t \sigma+u\cdot \nabla\sigma+(\gamma-1)\sigma\Div u+ \lambda\Div u=0,\\
\partial_t u+u\cdot\nabla u +\mu\Lambda^\alpha u +\lambda\nabla \sigma =-\mu \big(\Lambda^\alpha (uh(\sigma)) - u \Lambda^\alpha h(\sigma)\big),\\
(\sigma,u)|_{t=0}=(\sigma_0,u_0),
\end{cases}
\end{equation}
where $\lambda : = \sqrt{\kappa\gamma}$ and $\mu:=\frac{1}{c_{\alpha,N}}$.

In the following, we first study paralinearized equations of \eqref{eq.EAsigma.nl}, and obtain \emph{a priori} estimates in a hybrid Besov space. The analysis is inspired by the work of Danchin \cite{danchin2000global} (see also \cite{charve2010global,chen2010global}). We then calculate some nonlinear estimates and obtain \emph{a priori} local/global uniform estimates for the system \eqref{eq.EAsigma.nl}. 

\subsection{A priori estimates for the paralinearized equations}
In this subsection we study the following paralinearized system
\begin{equation}\label{eq.lineareqbn0}
\begin{cases}
  \partial_t \sigma + T_v\cdot\nabla \sigma + \lambda\Div{u}=F,\\
  \partial_t u + T_v \cdot \nabla u+ \mu\Lambda^\alpha u+\lambda\nabla \sigma=G, \\
  (\sigma, u)|_{t=0} = (\sigma_0,u_0),
\end{cases}
\end{equation}
where  $v$ is a given vector field of $\mathbb{R}^N$, and $F$, $G$ are given source terms. We denote
\begin{align*}
  T_v\cdot \nabla g:=\sum_{i=1}^N T_{v_i}\partial_{x_i}g = \sum_{i=1}^N \sum_{j\in \mathbb{Z}} \dot{S}_{j-1} v_i \dot\Delta_j \partial_{x_i}g.
\end{align*}
The system \eqref{eq.lineareqbn0} contains the major linear structures of \eqref{eq.EAsigma.nl}. \textit{A priori} energy estimates of  \eqref{eq.lineareqbn0} are stated as follows.
\begin{proposition}\label{prop.lin.pro}
For every $s\in\mathbb{R}$, the smooth solution $(\sigma,u)$ of the paralinearized system \eqref{eq.lineareqbn0} satisfies that
\begin{equation}\label{eq.lin.pro}
\begin{aligned}
  &\quad \Vert \sigma\Vert_{\widetilde{L}_t^{\infty} (\widetilde{B}^{s,s+\alpha-1})} + \Vert u\Vert_{\widetilde{L}_t^\infty (\dot{B}^s_{2,1})}
  + \int_0^t \Vert \sigma(\tau)\Vert_{\widetilde{B}^{s+\alpha,s+1}} \dd \tau + \int_0^t \Vert u(\tau)\Vert_{\dot{B}^{s+\alpha}_{2,1}} \dd \tau \\
  &\leq C e^{C V(t)}\Big(\Vert \sigma_0\Vert_{\widetilde{B}^{s,s+\alpha-1}}+\Vert u_0\Vert_{\dot{B}^s_{2,1}}+
  \int_0^t \Vert F(\tau) \Vert_{\widetilde{B}^{s,s+\alpha-1}} \dd \tau
  + \int_0^t \Vert G(\tau)\Vert_{\dot{B}^s_{2,1}} \dd \tau \Big),
\end{aligned}
\end{equation}
with
$V(t):=\int_0^t \Vert v(\tau)\Vert_{\dot{B}^{\frac{N}{2}+1}_{2,1}}\dd \tau$ and $C= C(\alpha, N)>0$.
\end{proposition}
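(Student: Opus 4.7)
The plan is to adapt Danchin's frequency-localized energy method for the barotropic compressible Navier-Stokes system. The spectral analysis of the constant-coefficient linear part of \eqref{eq.lineareqbn0} explains the hybrid structure: the symbol governing the divergence part of $u$ is $X^2+\mu|\xi|^\alpha X+\lambda^2|\xi|^2=0$, whose two roots at low frequencies ($\mu|\xi|^\alpha<2\lambda|\xi|$) are complex conjugates with real part $-\frac{\mu}{2}|\xi|^\alpha$, so both $\sigma$ and $u$ dissipate at rate $|\xi|^\alpha$, while at high frequencies the roots are real, corresponding to a ``fast'' mode $u$ with rate $|\xi|^\alpha$ and a ``slow'' mode $\sigma$ with rate $|\xi|^{2-\alpha}$. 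Each term in \eqref{eq.lin.pro} matches one of these rates.

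First I would apply $\dot\Delta_j$ to both equations in \eqref{eq.lineareqbn0} and write
\[
\dot\Delta_j(T_v\cdot\nabla g)=\dot S_{j-1}v\cdot\nabla\dot\Delta_j g+\mathcal R_j(v,g),
\]
so that $\sigma_j=\dot\Delta_j\sigma$ and $u_j=\dot\Delta_j u$ satisfy
\[
\partial_t\sigma_j+\dot S_{j-1}v\cdot\nabla\sigma_j+\lambda\Div u_j=F_j+\mathcal R_j^1,\qquad \partial_t u_j+\dot S_{j-1}v\cdot\nabla u_j+\mu\Lambda^\alpha u_j+\lambda\nabla\sigma_j=G_j+\mathcal R_j^2.
\]
Lemmas \ref{corollary.commutator_S_j} and \ref{lemma.Remark2.103} furnish the commutator bounds $\sum_j 2^{js}\|\mathcal R_j^1\|_{L^2}\lesssim\|v\|_{\dot B^{N/2+1}_{2,1}}\|\sigma\|_{\dot B^s_{2,1}}$ and its analogue for $\mathcal R_j^2$, which is the source of the Gronwall factor $e^{CV(t)}$.

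The heart of the proof is the frequency-dependent Lyapunov functional
\[
\mathcal L_j^2:=\|\sigma_j\|_{L^2}^2+\|u_j\|_{L^2}^2+2\beta_j\!\int_{\mathbb R^N}\!u_j\cdot\nabla\sigma_j\,dx,\qquad \beta_j=c_0\min\bigl(2^{j(\alpha-2)},\,2^{-j\alpha}\bigr),
\]
with $c_0$ small so that Bernstein's inequality $\|\nabla\sigma_j\|_{L^2}\sim 2^j\|\sigma_j\|_{L^2}$ yields the equivalence $\mathcal L_j^2\sim\|\sigma_j\|^2+\|u_j\|^2$ in both regimes. Differentiating along the paralinearized equations, the basic $L^2$ energy delivers the dissipation $\mu 2^{j\alpha}\|u_j\|^2$, the pressure couplings between $\sigma_j$ and $u_j$ cancel by integration by parts, and the cross term contributes $-\lambda\|\nabla\sigma_j\|^2+\lambda\|\Div u_j\|^2-\mu\!\int\Lambda^\alpha u_j\cdot\nabla\sigma_j\,dx$ plus source and transport remainders. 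The choice of $\beta_j$ is tailored to absorb $\beta_j\lambda\|\Div u_j\|^2$ into the $u$-dissipation and to split the $\Lambda^\alpha$-cross term by Young's inequality into pieces absorbable by both dissipations, yielding
\[
\frac{d}{dt}\mathcal L_j^2+c\bigl(\mu 2^{j\alpha}\|u_j\|^2+\lambda\beta_j 2^{2j}\|\sigma_j\|^2\bigr)\le C\|\nabla v\|_{L^\infty}\mathcal L_j^2+C\mathcal L_j\bigl(\|F_j\|+\|G_j\|+\|\mathcal R_j^1\|+\|\mathcal R_j^2\|\bigr),
\]
where $\beta_j 2^{2j}=2^{j\alpha}$ for $j\le 0$ and $2^{j(2-\alpha)}$ for $j>0$ matches the hybrid dissipation space $\widetilde B^{s+\alpha,s+1}$ for $\sigma$. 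Dividing by $\mathcal L_j$ and applying Duhamel/Gronwall produces, with $D_j=2^{j\alpha}$ (low) or $D_j=2^{j(2-\alpha)}$ (high),
\[
\sup_{[0,t]}\mathcal L_j(\tau)+D_j\!\int_0^t\!\mathcal L_j(\tau)\,d\tau\lesssim e^{CV(t)}\Bigl(\mathcal L_j(0)+\!\int_0^t\!\bigl(\|F_j\|+\|G_j\|+\|\mathcal R_j^1\|+\|\mathcal R_j^2\|\bigr)(\tau)\,d\tau\Bigr).
\]

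Weighted summation with $2^{js}$ at low and $2^{j(s+\alpha-1)}$ at high frequencies delivers all terms in \eqref{eq.lin.pro} \emph{except} the high-frequency part of $\|u\|_{L^1\dot B^{s+\alpha}_{2,1}}$, since the coupled Lyapunov only sees the slower rate $2^{j(2-\alpha)}$. I expect this gap to be the main obstacle: to recover the sharp $2^{j\alpha}$ rate for $u$ at high frequencies, I would treat the $u_j$-equation as a forced fractional heat equation with forcing $G_j-\lambda\nabla\sigma_j+\mathcal R_j^2$ and apply the standard heat-semigroup bound $2^{j\alpha}\!\int_0^t\!\|u_j\|_{L^2}\lesssim e^{CV(t)}\bigl(\|u_j(0)\|_{L^2}+\int\|G_j\|+\lambda 2^j\!\int\|\sigma_j\|+\int\|\mathcal R_j^2\|\bigr)$. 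After multiplication by $2^{js}$ and summation over $j>j_0$, the $\lambda$-contribution equals $\lambda\|\sigma\|_{L^1\widetilde B^{s+\alpha,s+1}}^h$, which is already controlled by the Lyapunov step, closing the estimate at the cost of enlarging the constant in the Gronwall exponent. The condition $\alpha>1$ enters precisely in ensuring that the choice of $\beta_j$ simultaneously realizes the equivalence, the absorption of $\|\Div u_j\|^2$, and the absorption of the $\Lambda^\alpha$-cross term.
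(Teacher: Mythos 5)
Your overall architecture (frequency-localized Lyapunov functional with a cross term to extract the smoothing of $\sigma$, commutator lemmas feeding the Gronwall factor $e^{CV(t)}$, and a separate forced-fractional-heat argument to upgrade the high-frequency smoothing of $u$ from $2^{j(2-\alpha)}$ to $2^{j\alpha}$) is the same as the paper's, and your low-frequency analysis is essentially equivalent to it: your cross term $\beta_j\int u_j\cdot\nabla\sigma_j$ with $\beta_j=c_02^{j(\alpha-2)}$ scales exactly like the paper's $\delta\frac{\mu}{\lambda}(\dot\Delta_j d\,|\,\Lambda^{\alpha-1}\dot\Delta_j\sigma)$, and working with $u$ rather than $d=\Lambda^{-1}\Div u$ is harmless since only $\Div u_j$ enters the coupling.

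There is, however, a genuine gap at high frequencies. Your functional $\mathcal L_j^2=\|\sigma_j\|^2+\|u_j\|^2+2\beta_j\int u_j\cdot\nabla\sigma_j$ is \emph{symmetric} in $\sigma_j$ and $u_j$: the equivalence $\mathcal L_j\approx\|\sigma_j\|_{L^2}+\|u_j\|_{L^2}$ holds with the same relative weight in both regimes. But the target estimate is \emph{anisotropic} for $j>j_0$: one needs $2^{j(s+\alpha-1)}\|\sigma_j\|$ together with $2^{js}\|u_j\|$, while the admissible sources are $2^{j(s+\alpha-1)}\|F_j\|$ but only $2^{js}\|G_j\|$ and $2^{js}\|u_j(0)\|$. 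Since your differential inequality is a single scalar inequality for $\mathcal L_j$, you can only multiply it by one weight per block: with $2^{js}$ you obtain $\sup_t 2^{js}\|\sigma_j\|$ and $2^{j(s+2-\alpha)}\int\|\sigma_j\|$, which are weaker than the required $2^{j(s+\alpha-1)}$ and $2^{j(s+1)}$ by the factor $2^{j(\alpha-1)}\to\infty$; with $2^{j(s+\alpha-1)}$ the right-hand side contains $\sum_{j>j_0}2^{j(s+\alpha-1)}\|G_j\|$ and $\sum_{j>j_0}2^{j(s+\alpha-1)}\|u_j(0)\|$, which are not controlled by $\|G\|_{\dot B^{s}_{2,1}}$ and $\|u_0\|_{\dot B^s_{2,1}}$. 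So the step ``weighted summation with $2^{j(s+\alpha-1)}$ at high frequencies delivers all terms'' does not go through. The paper circumvents this by building the weight into the high-frequency energy itself, taking
\begin{equation*}
Y_j^2=\Vert\Lambda^{\alpha-1}\dot\Delta_j\sigma\Vert_{L^2}^2+2\tfrac{\lambda^2}{\mu^2}\Vert\dot\Delta_j d\Vert_{L^2}^2-2\tfrac{\lambda}{\mu}\big(\dot\Delta_j d\,\big|\,\Lambda^{\alpha-1}\dot\Delta_j\sigma\big),\qquad j>j_0,
\end{equation*}
so that $Y_j\approx 2^{j(\alpha-1)}\|\sigma_j\|+\|d_j\|$ and the source naturally splits as $2^{j(\alpha-1)}\|f_j\|+\|g_j\|$, matching $\|F\|_{\widetilde B^{s,s+\alpha-1}}$ and $\|G\|_{\dot B^s_{2,1}}$ after multiplication by $2^{js}$. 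Your scheme can be repaired the same way — replace $\|\sigma_j\|^2$ by $2^{2j(\alpha-1)}\|\sigma_j\|^2$ and rescale the cross term accordingly for $j>j_0$ — but as written the high-frequency part of the argument does not close.
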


\begin{proof}
Denote by $\mathbb{P}:= \Id - \nabla \Delta^{-1} \Div $ the projection operator onto the divergence-free field and
\begin{align}\label{eq:d}
  d := \Lambda^{-1} \Div u.
\end{align}
We decompose $u$ into two parts
\[u = - \nabla \Lambda^{-1} d + \mathbb{P} u.\]
Then \eqref{eq.lineareqbn0} becomes
\begin{equation}\label{eq.lineareq-sig-d}
\begin{cases}
  \partial_t \sigma+  T_{v}\cdot \nabla\sigma + \lambda \Lambda d =  F,\\
  \partial_t d +\Lambda^{-1}\Div (T_{v} \cdot \nabla u) + \mu\Lambda^\alpha d - \lambda \Lambda \sigma =  \Lambda^{-1} \Div G, \\
  \partial_t \mathbb{P} u + \mathbb{P}(T_v \cdot \nabla u) +  \mu \Lambda^\alpha \mathbb{P}u =  \mathbb{P} G.
\end{cases}
\end{equation}
From \eqref{eq.lineareq-sig-d}, we see that $(\dot{\Delta}_j \sigma, \dot{\Delta}_j u)$ satisfy that
\begin{equation}\label{eq.lineareqibn0}
\begin{cases}
  \partial_t \dot{\Delta}_j \sigma + \dot S_{j-1} v\cdot\nabla \dot{\Delta}_j \sigma + \lambda \Lambda {\dot{\Delta}_j} d = f_j,\\
  \partial_t \dot{\Delta}_j d +\dot S_{j-1} v \cdot \nabla \dot{\Delta}_j d + \mu\Lambda^\alpha  \dot{\Delta}_j d - \lambda \Lambda \dot{\Delta}_j \sigma = g_j,\\
  \partial_t\dot{\Delta}_j \mathbb{P}u + \dot S_{j-1} v \cdot \nabla\dot{\Delta}_j \mathbb{P}u+ \mu\Lambda^\alpha\dot{\Delta}_j \mathbb{P}u =\widetilde{g}_j,
\end{cases}
\end{equation}
where
\begin{align}
  f_j & :=\dot{\Delta}_j F + \dot S_{j-1} v\cdot\nabla \dot{\Delta}_j \sigma - \dot{\Delta}_j (T_{v}\cdot\nabla \sigma), \label{eq:fj} \\
  g_j & :=\Lambda^{-1}\Div \dot{\Delta}_j G + \dot S_{j-1} v\cdot\nabla\dot{\Delta}_j d - \Lambda^{-1} \Div \dot{\Delta}_j (T_{v}\cdot \nabla u), \label{eq:gj} \\
  \widetilde{g}_j & : =\mathbb{P}\dot{\Delta}_j G + \dot S_{j-1} v \cdot\nabla \mathbb{P}\dot{\Delta}_j u-\mathbb{P}\dot{\Delta}_j (T_{v}\cdot \nabla u). \label{eq:tild-gj}
\end{align}

Now we estimate the compressible part $(\dot{\Delta}_j \sigma, \dot{\Delta}_j d)$ and the incompressible part $\mathbb{P}\dot{\Delta}_j u$ respectively. For the sake of simplicity, we denote $(f| g)=\int_{\R^d} f(x)\cdot g(x) \dd x$ in the sequel.

\textbf{Step 1: the compressible part.}
Taking the $L^2$ inner product of the first two equations of \eqref{eq.lineareqibn0} respectively and using the integration by parts give that
\begin{equation}\label{eq.comp.aj}
  \frac{1}{2}\frac{d}{dt} \Vert \dot{\Delta}_j \sigma\Vert^2_{L^2} + \lambda \int\dot{\Delta}_j \sigma\, \Lambda{\dot{\Delta}_j d} \dd x
  = \int f_j\, \dot{\Delta}_j \sigma \dd x + \frac{1}{2} \int (\dot{\Delta}_j \sigma)^2 \Div \dot S_{j-1} v \dd x,
\end{equation}
and
\begin{equation}\label{eq.comp.Quj}
  \frac{1}{2}\frac{d}{dt}\Vert \dot{\Delta}_j d \Vert^2_{L^2}
  + \mu\Vert\Lambda^{\frac{\alpha}{2}}\dot{\Delta}_j d \Vert^2_{L^2}
  - \lambda\int  \dot{\Delta}_j \sigma \Lambda \dot{\Delta}_j d \dd x
  = \int g_j \dot{\Delta}_j d \dd x + \frac{1}{2}\int (\dot{\Delta}_j d)^2 \Div \dot S_{j-1} v \dd x.
\end{equation}
In order to get the smoothing effect of $\sigma$, we consider the cross term $(\Lambda^{\alpha-1} \dot{\Delta}_j \sigma | \dot{\Delta}_j d)$.
Applying the operator $\Lambda^{\alpha-1}$ to the first equation of \eqref{eq.lineareqibn0} yields
\begin{equation}\label{eq:sigma-Lam}
\begin{aligned}
  \partial_t \Lambda^{\alpha-1} \dot{\Delta}_j \sigma+\Lambda^{\alpha-1}(\dot S_{j-1} v \cdot\nabla \dot{\Delta}_j \sigma)
  + \lambda \Lambda^\alpha {\dot{\Delta}_j d} = \Lambda^{\alpha-1} f_j.
\end{aligned}
\end{equation}
Together with the second equation of \eqref{eq.lineareqibn0}, we get
\begin{align}\label{eq.comp.crossj}
  &\quad\frac{d}{dt} (\dot{\Delta}_j d | \Lambda^{\alpha-1} \dot{\Delta}_j \sigma)
  - \lambda\Vert \Lambda^{\frac{\alpha}{2}} \dot{\Delta}_j \sigma\Vert^2_{L^2}
  + \lambda\Vert\Lambda^{\frac{\alpha}{2}}\dot{\Delta}_j d \Vert_{L^2}^2 + \mu\int \Lambda^\alpha \dot{\Delta}_j d \cdot \Lambda^{\alpha-1} \dot{\Delta}_j \sigma \dd x \nonumber \\
  &=\int \Lambda^{\alpha-1} f_j\cdot \dot{\Delta}_j d \dd x + \int g_j\cdot \Lambda^{\alpha-1} \dot{\Delta}_j \sigma \dd x \nonumber \\
  &\quad- \int (\dot S_{j-1} v \cdot \nabla \dot{\Delta}_j d)\cdot \Lambda^{\alpha-1} \dot{\Delta}_j \sigma \dd x - \int \Lambda^{\alpha-1} (\dot S_{j-1} v\cdot\nabla \dot{\Delta}_j \sigma ) \cdot \dot{\Delta}_j d\, \dd x.
\end{align}

To proceed, we work with low frequencies and high frequencies separately.
Define
\begin{equation}\label{eq:j0}
  j_0 := \frac{1}{\alpha -1} \log_2 \frac{4\lambda}{\mu},
  \qquad\text{i.e.}\quad
  2^{j_0(\alpha-1)} = \frac{4\lambda}{\mu}.
\end{equation}

For low frequencies $j\leq j_0$, namely
\begin{equation}\label{eq:jlow}
  2^{j(\alpha-1)} \leq \frac{4\lambda}{\mu},
\end{equation}
we set
\begin{align*}
  Y_j^2 = \Vert \dot{\Delta}_j \sigma\Vert^2_{L^2}+ \Vert \dot{\Delta}_j d\Vert^2_{L^2} - \delta \frac{\mu}{\lambda} (\dot{\Delta}_j d | \Lambda^{\alpha-1}  \dot{\Delta}_j \sigma),\quad j\leq j_0,
\end{align*}
where $\delta>0$ is a suitable small constant, e.g. $\delta=\frac{1}{300}$.
It is easy to check that
\begin{align}\label{eq:Yj-low-equ}
  Y_j\approx \Vert\dot{\Delta}_j \sigma\Vert_{L^2}+\Vert \dot{\Delta}_j d\Vert_{L^2}.
\end{align}
Gathering the equations \eqref{eq.comp.aj}, \eqref{eq.comp.Quj} and \eqref{eq.comp.crossj}, we see that $Y_j$ satisfies
\begin{equation}\label{eq.comp.low}
\begin{aligned}
  &\quad\frac{1}{2}\frac{d}{dt}Y_j^2+ \Big(1-\frac{\delta}{2}\Big)\mu\Vert\Lambda^{\frac{\alpha}{2}} \dot{\Delta}_j d\Vert^2_{L^2}+\frac{\delta\mu}{2}\Vert \Lambda^{\frac{\alpha}{2}}\dot{\Delta}_j \sigma\Vert^2_{L^2}-\frac{\delta\mu^2}{2\lambda}\int \Lambda^\alpha \dot{\Delta}_j d \cdot \Lambda^{\alpha-1} \dot{\Delta}_j \sigma \dd x \\
  &=\int f_j \Big(\dot{\Delta}_j \sigma-\frac{\delta\mu}{2\lambda}\Lambda^{\alpha-1} \dot{\Delta}_j d\Big) \dd x + \int g_j \Big(\dot{\Delta}_j d - \frac{\delta\mu}{2\lambda}\Lambda^{\alpha-1} \dot{\Delta}_j \sigma\Big) \dd x \\
  &\quad - \frac{1}{2} \int \big((\dot{\Delta}_j \sigma)^2 + (\dot{\Delta}_j d)^2 \big) \Div \dot S_{j-1} v\, \dd x  \\
  &\quad+ \frac{\delta\mu}{2\lambda}\int (\dot S_{j-1} v \cdot \nabla \dot{\Delta}_j d)\cdot \Lambda^{\alpha-1} \dot{\Delta}_j \sigma \dd x
  +  \frac{\delta\mu}{2\lambda}\int \Lambda^{\alpha-1}\big(\dot S_{j-1} v\cdot\nabla \dot{\Delta}_j \sigma\big) \cdot \dot{\Delta}_j d\, \dd x.
\end{aligned}
\end{equation}
For the terms on the right-hand side of equality \eqref{eq.comp.low}, using H\"older's inequality, we get
\begin{equation*}
\begin{aligned}
   \int f_j \Big(\dot{\Delta}_j \sigma-\frac{\delta\mu}{2\lambda}\Lambda^{\alpha-1} \dot{\Delta}_j d\Big)\dd x
  +\int g_j \Big(\dot{\Delta}_j d - \frac{\delta\mu}{2\lambda}\Lambda^{\alpha-1} \dot{\Delta}_j \sigma\Big)\dd x \le C(\Vert f_j\Vert_{L^2}+\Vert g_j\Vert_{L^2})Y_j, &
\end{aligned}
\end{equation*}
\begin{equation*}
\begin{aligned}
  \Big| \int \big((\dot{\Delta}_j \sigma)^2 + (\dot{\Delta}_j d)^2\big) \Div \dot S_{j-1} v \dd x \Big|
  \le C\Vert \nabla \dot S_{j-1} v\Vert_{L^{\infty}}Y_j^2,
\end{aligned}
\end{equation*}
and thanks to the integration by parts and Lemma \ref{corollary.commutator_S_j}, we find
\begin{align}\label{eq.commutator_S_j}
  &\quad \int (\dot S_{j-1} v \cdot \nabla \dot{\Delta}_j d)\cdot \Lambda^{\alpha-1} \dot{\Delta}_j \sigma \dd x
  + \int \Lambda^{\alpha-1} ( \dot S_{j-1} v\cdot\nabla \dot{\Delta}_j \sigma) \cdot \dot{\Delta}_j d\, \dd x \nonumber \\
  &=  - \int_{\R^N}\, (\Div \dot S_{j-1} v) \dot{\Delta}_j d\cdot \Lambda^{\alpha-1} \dot{\Delta}_j \sigma \dd x + \int_{\R^N} [\Lambda^{\alpha-1},\dot S_{j-1}v\cdot\nabla] \dot{\Delta}_j \sigma\, \cdot \dot{\Delta}_j d\, \dd x \nonumber\\
  &\le C2^{j_0(\alpha-1)}\Vert \nabla \dot S_{j-1} v\Vert_{L^{\infty}} \Vert \dot{\Delta}_j d\Vert_{L^2} \Vert \dot{\Delta}_j \sigma\Vert_{L^2}.
\end{align}
For the terms on the left-hand side of \eqref{eq.comp.low}, by virtue of H\"older's and Young's inequalities, we have that for all $j\le j_0$,
\begin{equation*}
\begin{aligned}
  \frac{\delta\mu^2}{2\lambda}\int \Lambda^\alpha \dot{\Delta}_j d \cdot \Lambda^{\alpha-1} \dot{\Delta}_j \sigma \dd x
  &\le \frac{\delta \mu^3}{4\lambda^2} \|\Lambda^{\frac{3\alpha}{2}-1} \dot{\Delta}_j d\|_{L^2}^2  + \frac{\delta \mu}{4} \|\Lambda^{\frac{\alpha}{2}}\dot{\Delta}_j \sigma\|_{L^2}^2  \\
  &\le \frac{\delta\mu^3}{\lambda^2}\big(\frac83\big)^{2(\alpha-1)} 2^{2j_0(\alpha-1)} \Vert\Lambda^{\frac{\alpha}{2}}\dot{\Delta}_j d\Vert^2_{L^2} + \frac{\delta\mu}{4} \Vert\Lambda^{\frac{\alpha}{2}} \dot{\Delta}_j \sigma \Vert^2_{L^2}.
\end{aligned}
\end{equation*}
Choosing $\delta $ small enough (e.g. $\delta =\frac{1}{300}$) and using \eqref{eq:jlow}, we obtain
\begin{align*}
  \Big(1-\frac{\delta}{2}\Big)\mu -\frac{\delta\mu^3}{\lambda^2}\Big(\frac83\Big)^{2(\alpha-1)} 2^{2j_0(\alpha-1)} > (1 - 115\delta)\mu  > \frac{\mu}{2}.
\end{align*}
Thus the equation \eqref{eq.comp.low} can be rewritten as
\begin{equation}\label{eq.Yj.pro.low}
\begin{aligned}
  \frac{d}{dt}Y_j+\bar{\mu} 2^{j\alpha}Y_j\le C\Big(\Vert f_j\Vert_{L^2} + \Vert g_j\Vert_{L^2}+\Vert \nabla v\Vert_{L^{\infty}}Y_j\Big),
\end{aligned}
\end{equation}
where $\bar{\mu} := \frac{\delta\mu}{8}$ and we have used the following estimate
\begin{align*}
  \frac{\mu}{2}\Vert\Lambda^{\frac{\alpha}{2}} \dot{\Delta}_j d\Vert^2_{L^2} + \frac{\delta\mu}{4} \Vert \Lambda^{\frac{\alpha}{2}}\dot{\Delta}_j \sigma\Vert^2_{L^2}
  \geq \frac{\delta \mu}{4} \big(\frac{3}{4}\big)^\alpha 2^{j\alpha} \big( \|\dot\Delta_j d\|_{L^2}^2 + \|\dot\Delta_j \sigma\|_{L^2}^2\big) \geq \bar\mu 2^{j\alpha} Y_j^2 .
\end{align*}

Now we consider the high frequency case $j>j_0$, namely
\begin{equation}\label{eq:jhigh}
  2^{j(\alpha-1)} > \frac{4\lambda}{\mu}.
\end{equation}
We define another energy, still denoted by $Y_j$, as follows
\begin{equation*}
  Y_j^2= \Vert \Lambda^{\alpha-1} \dot{\Delta}_j \sigma\Vert^2_{L^2}
  + 2 \frac{\lambda^2}{\mu^2}\Vert \dot{\Delta}_j d\Vert^2_{L^2}
  - 2\frac{\lambda}{\mu}(\dot{\Delta}_j d | \Lambda^{\alpha-1} \dot{\Delta}_j \sigma),\qquad j>j_0.
\end{equation*}
From \eqref{eq:sigma-Lam} we can gain the following $L^2$-estimate 
\begin{equation*}
\begin{aligned}
  \frac{1}{2}\frac{d}{dt}\Vert \Lambda^{\alpha-1} \dot{\Delta}_j \sigma\Vert^2_{L^2}
  &+\int \Lambda^{\alpha-1} \big(\dot S_{j-1} v\cdot\nabla \dot{\Delta}_j \sigma\big)\cdot \Lambda^{\alpha-1} \dot{\Delta}_j \sigma \dd x \\
  &+ \lambda\int \Lambda^\alpha \dot{\Delta}_j d \cdot \Lambda^{\alpha-1} \dot{\Delta}_j \sigma \dd x
  =\int \Lambda^{\alpha-1} f_j \cdot \Lambda^{\alpha-1} \dot{\Delta}_j \sigma \dd x.
\end{aligned}
\end{equation*}
Noting that $2\frac{\lambda}{\mu}(\dot{\Delta}_j d | \Lambda^{\alpha-1} \dot{\Delta}_j \sigma) \leq \frac{3}{2} \frac{\lambda^2}{\mu^2}\|\dot \Delta_j d\|_{L^2}^2 + \frac{2}{3}\|\Lambda^{\alpha-1}\dot\Delta_j \sigma\|_{L^2}^2 $, we find that
\begin{align}\label{eq:Yj-high-equ}
  Y_j \approx \Vert \Lambda^{\alpha-1} \dot{\Delta}_j \sigma\Vert_{L^2} + \frac{\lambda^2}{\mu^2}\Vert \dot{\Delta}_j d\Vert_{L^2}
\end{align}
and $Y_j$ satisfies
\begin{equation*}
\begin{aligned}
  &\quad\frac{1}{2}\frac{d}{dt} Y_j^2
  + \frac{\lambda^2}{\mu}\Vert\Lambda^{\frac{\alpha}{2}} \dot{\Delta}_j d\Vert^2_{L^2}
  + \frac{\lambda^2}{\mu}\Vert \Lambda^{\frac{\alpha}{2}} \dot{\Delta}_j \sigma\Vert^2_{L^2}
  - \frac{2\lambda^3}{\mu^2} \int \dot{\Delta}_j \sigma\, \Lambda \dot{\Delta}_j d \,\dd x \\
  &= \int \Lambda^{\alpha-1} f_j \Big( \Lambda^{\alpha-1} \dot{\Delta}_j \sigma -\frac{\lambda}{\mu} \dot{\Delta}_j d \Big)\,\dd x
  + \frac{\lambda}{\mu^2} \int g_j ( 2\lambda \dot{\Delta}_j d - \mu\Lambda^{\alpha-1} \dot{\Delta}_j \sigma)\,\dd x \\
  &\quad+ \frac{\lambda}{\mu}  \int (\dot S_{j-1} v \cdot \nabla \dot{\Delta}_j d)\cdot \Lambda^{\alpha-1} \dot{\Delta}_j \sigma\, \dd x
  + \frac{\lambda}{\mu} \int \Lambda^{\alpha-1} \big(\dot S_{j-1} v\cdot\nabla \dot{\Delta}_j \sigma\big) \cdot \dot{\Delta}_j d\, \dd x \\
  &\quad- \int \Lambda^{\alpha-1} (\dot S_{j-1} v\cdot \nabla \dot{\Delta}_j \sigma)\cdot \Lambda^{\alpha-1} \dot{\Delta}_j \sigma \,\dd x
  + \frac{\lambda^2}{\mu^2} \int (\dot{\Delta}_j d)^2 \Div \dot S_{j-1} v\, \dd x .
\end{aligned}
\end{equation*}
For the left-hand side of the above equality, applying \eqref{eq:jhigh} we have
\begin{equation*}
  \Vert\Lambda^{\frac{\alpha}{2}}\dot{\Delta}_j d\Vert_{L^2}^2
  \ge (\frac{3}{4})^{2\alpha-2} 2^{j(2\alpha-2)}\Vert\Lambda^{1-\frac{\alpha}{2}}\dot{\Delta}_j d\Vert^2_{L^2}
  \ge 8 \frac{\lambda^2}{\mu^2} \Vert\Lambda^{1-\frac{\alpha}{2}}\dot{\Delta}_j d\Vert^2_{L^2},
\end{equation*}
and
\begin{equation*}
\begin{aligned}
  \frac{2\lambda^3}{\mu^2} \int \dot{\Delta}_j \sigma \,\Lambda \dot{\Delta}_j d \dd x
  &\le \frac{\lambda^2}{2\mu}\Vert \Lambda^{\frac{\alpha}{2}} \dot{\Delta}_j \sigma\Vert_{L^2}^2
  +\frac{2\lambda^4}{\mu^3} \Vert \Lambda^{1-\frac{\alpha}{2}}\dot{\Delta}_j d \Vert_{L^2}^2 \\
  & \leq \frac{\lambda^2}{2\mu}\Vert \Lambda^{\frac{\alpha}{2}} \dot{\Delta}_j \sigma\Vert_{L^2}^2
  + \frac{\lambda^2}{2\mu} \|\Lambda^{\frac{\alpha}{2}} \dot{\Delta}_j d \|_{L^2}^2,
\end{aligned}
\end{equation*}
and
\begin{align*}
  \frac{\lambda^2}{2\mu} \big(\Vert \Lambda^{\frac{\alpha}{2}} \dot{\Delta}_j \sigma\Vert_{L^2}^2
  + \|\Lambda^{\frac{\alpha}{2}} \dot{\Delta}_j d \|_{L^2}^2\big)
  & \geq \frac{\lambda^2}{2\mu} \Big(\Vert \Lambda^{\frac{\alpha}{2}} \dot{\Delta}_j \sigma\Vert_{L^2}^2
  + 8 \frac{\lambda^2}{\mu^2} \|\Lambda^{1-\frac{\alpha}{2}} \dot{\Delta}_j d \|_{L^2}^2 \Big) \\
  & \geq \frac{\lambda^2}{4 \mu} 2^{j(2-\alpha)} \Big( \|\Lambda^{\alpha -1} \dot{\Delta}_j \sigma \|_{L^2}^2
  + \frac{8\lambda^2}{\mu^2} \|\dot{\Delta}_j d\|_{L^2}^2 \Big).
\end{align*}
On the other hand, for the terms on the right-hand side, owing to H\"older's and Bernstein's inequalities, we infer that
\begin{equation*}
  \int \Lambda^{\alpha-1} f_j \Big( \Lambda^{\alpha-1} \dot{\Delta}_j \sigma -\frac{\lambda}{\mu} \dot{\Delta}_j d \Big)\,\dd x
  \le C\Vert \Lambda^{\alpha-1}f_j \Vert_{L^2}Y_j,
\end{equation*}
\begin{equation*}
\begin{aligned}
  \frac{\lambda}{\mu^2} \int g_j \big( 2\lambda \dot{\Delta}_j d - \mu\Lambda^{\alpha-1} \dot{\Delta}_j \sigma\big)\,\dd x \le C\Vert g_j\Vert_{L^2}Y_j,
\end{aligned}
\end{equation*}
and
\begin{equation*}
  \frac{\lambda^2}{\mu^2} \int (\dot{\Delta}_j d)^2 \Div \dot S_{j-1} v\, \dd x
  \le C\Vert \Div \dot S_{j-1} v\Vert_{L^\infty}\Vert \dot{\Delta}_j d\Vert_{L^2}^2,
\end{equation*}
and using the integration by parts and Corollary \ref{corollary.commutator_S_j},
\begin{align*}
  &\quad \int \Lambda^{\alpha-1} (\dot S_{j-1} v\cdot \nabla \dot{\Delta}_j \sigma)\cdot \Lambda^{\alpha-1} \dot{\Delta}_j \sigma \, \dd x
  \\&=\int  (\dot S_{j-1} v\cdot \nabla \Lambda^{\alpha-1}\dot{\Delta}_j \sigma)\, \Lambda^{\alpha-1} \dot{\Delta}_j \sigma \, \dd x
  + \int ([\Lambda^{\alpha-1}, \dot S_{j-1} v\cdot \nabla] \dot{\Delta}_j \sigma)\, \Lambda^{\alpha-1} \dot{\Delta}_j \sigma \, \dd x \\
  &= - \frac{1}{2}\int (\Div \dot S_{j-1} v )\cdot (\Lambda^{\alpha-1}\dot{\Delta}_j \sigma)^2 \, \dd x
  + \int ([\Lambda^{\alpha-1}, \dot S_{j-1} v\cdot \nabla] \dot{\Delta}_j \sigma)\, \Lambda^{\alpha-1} \dot{\Delta}_j \sigma \, \dd x \\
  &\le C 2^{j(\alpha-1)}\Vert \nabla \dot S_{j-1} v\Vert_{L^{\infty}} \Vert \Lambda^{\alpha-1}\dot{\Delta}_j \sigma\Vert_{L^2} \Vert \dot{\Delta}_j \sigma\Vert_{L^2}.
\end{align*}
Similarly as obtaining inequality \eqref{eq.commutator_S_j}, we also see that
\begin{equation*}
\begin{aligned}
  \frac{\lambda}{\mu}  \int (\dot S_{j-1} v \cdot \nabla \dot{\Delta}_j d)\cdot \Lambda^{\alpha-1} \dot{\Delta}_j \sigma \dd x
  + \frac{\lambda}{\mu} \int \Lambda^{\alpha-1} \big(\dot S_{j-1} v\cdot\nabla \dot{\Delta}_j \sigma\big) \cdot \dot{\Delta}_j d\, \dd x &\\
  \le C2^{j(\alpha-1)}\Vert \nabla \dot S_{j-1} v\Vert_{L^{\infty}} \Vert \dot{\Delta}_j d\Vert_{L^2} \Vert \dot{\Delta}_j \sigma\Vert_{L^2}.&
\end{aligned}
\end{equation*}
Hence, by letting $\bar{\nu} := \frac{\lambda^2}{4\mu}$ and gathering the above estimates, we have the following inequality
\begin{equation}\label{eq.Yj.pro.high}
  \frac{d}{dt}Y_j+\bar{\nu}2^{j(2-{\alpha})}Y_j\le C 2^{j(\alpha-1)} \Vert f_j \Vert_{L^2}
  + C \Vert g_j\Vert_{L^2}+ C\Vert \nabla v\Vert_{L^\infty}Y_j.
\end{equation}

Combining the estimates on both low and high frequencies \eqref{eq.Yj.pro.low} and \eqref{eq.Yj.pro.high}, we obtain
\begin{equation*}
  \frac{d}{dt}{Y_j}+ \min\{\bar{\mu} 2^{j\alpha},\bar{\nu}2^{j(2-\alpha)}\}Y_j
  \le
  C \max\{1,2^{j(\alpha-1)}\}\Vert f_j \Vert_{L^2}
  + C \Vert g_j\Vert_{L^2}+ C\Vert \nabla v\Vert_{L^\infty}Y_j.
\end{equation*}
Integrating this with respect to the time variable and using \eqref{eq:Yj-low-equ}, \eqref{eq:Yj-high-equ}, we obtain
\begin{equation}\label{eq.priori.comp.Xj}
\begin{aligned}
  & \quad Y_j(t)+\min\{\bar{\mu} 2^{j\alpha},\bar{\nu}2^j\}\int_0^t \|\dot{\Delta}_j \sigma\|_{L^2} \dd \tau + \min\{\bar{\mu} 2^{j\alpha},\bar{\nu}2^{j(2-\alpha)}\} \int_0^t \|\dot{\Delta}_j d\|_{L^2} \dd \tau \\
  & \le Y_j(0)+C\int_0^t \Big(\max\{1,2^{j(\alpha-1)}\}\Vert f_j \Vert_{L^2} +\Vert g_j\Vert_{L^2}+\Vert \nabla v\Vert_{L^\infty}Y_j\Big) \dd \tau.
\end{aligned}
\end{equation}

In the above estimate, the smoothing effect of $d$ in the high-frequency can be improved. 
Indeed, taking the $L^2$-inner product with $d$ in the second equation of \eqref{eq.lineareqibn0}, we deduce that
\begin{equation}\label{eq.dj.pro}
  \frac{d}{dt}\Vert \dot{\Delta}_j d\Vert_{L^2}
  + (\frac{3}{4})^\alpha \mu2^{j\alpha}\Vert\dot{\Delta}_j d\Vert_{L^2} \le \frac{8}{3} \lambda 2^j\Vert  \dot{\Delta}_j \sigma\Vert_{L^2}
  + \Vert g_j\Vert_{L^2}+\Vert \nabla v\Vert_{L^{\infty}}\Vert \dot{\Delta}_j d\Vert_{L^2},
\end{equation}
and integrating on the time variable leads to
\begin{equation*}
  \mu 2^{j\alpha}\int_0^t \Vert\dot{\Delta}_j d(\tau)\Vert_{L^2}\dd \tau - \lambda 2^j\int_0^t \Vert \dot{\Delta}_j \sigma(\tau)\Vert_{L^2}\dd \tau
  \le CY_j(0) + C\int_0^t \Big(\Vert g_j\Vert_{L^2} +\Vert\nabla v \Vert_{L^\infty}Y_j\Big) \dd \tau.
\end{equation*}
Inserting the above inequality into \eqref{eq.priori.comp.Xj} yields
\begin{equation}\label{eq.lin.comp.pro}
\begin{aligned}
  & \quad Y_j +\min\{\bar{\mu} 2^{j\alpha},\bar{\nu}2^j \}\int_0^t \Vert \dot{\Delta}_j \sigma\Vert_{L^2}\dd \tau
  +2^{j\alpha}\int_0^t \Vert\dot{\Delta}_j d\Vert_{L^2}\dd \tau \\
  & \leq C Y_j(0) + C \int_0^t \Big(\max\{1,2^{j(\alpha-1)}\}\Vert f_j \Vert_{L^2}
  +\Vert g_j\Vert_{L^2}+\Vert \nabla v\Vert_{L^\infty}Y_j \Big) \dd \tau.
\end{aligned}
\end{equation}

\textit{\bf Step 2: the incompressible part.}
By taking the $L^2$-estimate of equation $\eqref{eq.lineareqibn0}_3$, the incompressible part satisfies
\begin{equation*}%
\begin{aligned}
  \frac{1}{2}\frac{d}{dt}\Vert \dot\Delta_j\mathbb{P}u\Vert_{L^2}^2 + \mu \Vert \Lambda^{\frac{\alpha}{2}}\dot\Delta_j\mathbb{P}u\Vert_{L^2}^2
  \le C \Vert \dot\Delta_j \mathbb{P} u\Vert_{L^2} \big(\Vert \Div \dot S_{j-1} v\Vert_{L^{\infty}}\Vert \dot\Delta_j\mathbb{P} u \Vert_{L^2}
  + \Vert \widetilde{g}_j\Vert_{L^2}\big),
\end{aligned}
\end{equation*}
which implies that
\begin{align}\label{eq.Puj.pro}
  \frac{d}{dt}\Vert \dot\Delta_j\mathbb{P}u\Vert_{L^2} + \mu (\frac{3}{4})^\alpha 2^{j\alpha}\Vert \dot\Delta_j\mathbb{P}u\Vert_{L^2}^2
  \le C \big(\Vert \Div \dot S_{j-1} v\Vert_{L^{\infty}}\Vert \dot\Delta_j\mathbb{P} u \Vert_{L^2} + \Vert \widetilde{g}_j\Vert_{L^2}\big).
\end{align}
Integrating with respect to the time variable, we have
\begin{equation}\label{eq.lin.incomp.pro}
\begin{aligned}
  \Vert \dot\Delta_j \mathbb{P}u(t)\Vert_{L^2} + \mu 2^{j\alpha} \int_0^t\Vert \dot \Delta_j\mathbb{P} u\Vert_{L^2}\dd \tau
  \le 2 \Vert \dot \Delta_j\mathbb{P}u_0\Vert_{L^2} + C \int_0^t \Big(\Vert \widetilde{g}_j\Vert_{L^2} + \Vert \nabla v\Vert_{L^{\infty}} \Vert \dot\Delta_j\mathbb{P}u\Vert_{L^2} \Big)\dd \tau.
\end{aligned}
\end{equation}

\textit{\bf Step 3: the \textit{a priori} estimate for $(\sigma,u)$.}
We need to combine the compressible and incompressible estimates to show the \textit{a priori} estimate of $(\sigma,u)$.
Multiplying $2^{js}$ on both sides of \eqref{eq.lin.comp.pro} and \eqref{eq.lin.incomp.pro}, taking the $\ell^1$-norm with regard to $j$,
and noting that $\|u\|_{\dot{B}^s_{2,1}} \leq \|d\|_{\dot{B}^s_{2,1}} + \|\mathbb{P} u\|_{\dot{B}^s_{2,1}}$, we obtain
\begin{equation}\label{eq:sig-u-es}
\begin{aligned}
  &\quad\Vert \sigma\Vert_{\widetilde{L}_t^{\infty}(\widetilde{B}^{s,s+\alpha-1})} + \Vert u\Vert_{\widetilde{L}_t^{\infty}(\dot{B}^s_{2,1})}
  +\int_0^t \Big(\Vert \sigma(\tau)\Vert_{\widetilde{B}^{s+\alpha,s+1}} + \Vert u(\tau)\Vert_{\dot{B}^{s+\alpha}_{2,1}}\Big) \dd \tau\\
  &\lesssim \Vert \sigma_0\Vert_{\widetilde{B}^{s,s+\alpha-1}}+\Vert u_0\Vert_{\dot{B}^s_{2,1}}
  +\int_0^t \Vert \nabla v(\tau)\Vert_{L^\infty} \big(\Vert \sigma(\tau)\Vert_{\widetilde{B}^{s,s+\alpha-1}} + \Vert u(\tau)\Vert_{\dot{B}^s_{2,1}} \big) \dd \tau \\
  &\quad+ \int_0^t \Big(\sum_{j\in\mathbb{Z}}\max\{2^{js},2^{j(s+\alpha-1)}\}\Vert f_j \Vert_{L^2}
  + \sum_{j\in\mathbb{Z}}2^{js} \big(\Vert g_j\Vert_{L^2} + \Vert \widetilde{g}_j\Vert_{L^2}\big)\Big) \dd \tau.
\end{aligned}
\end{equation}
Recalling that $f_j$, $g_j$ and $\widetilde{g}_j$ are given by \eqref{eq:fj}--\eqref{eq:tild-gj}, and by virtue of Lemma \ref{lemma.Remark2.103}, we see that
\begin{equation}\label{eq.fi}
  \sum_{j\in\mathbb{Z}}\max\{2^{js},2^{j(s+\alpha-1)}\}\Vert f_j \Vert_{L^2} \le C \Vert F\Vert_{\widetilde{B}^{s,s+\alpha-1}}+C\Vert \nabla v\Vert_{\dot{B}^{\frac{N}{2}}_{2,1}}\Vert \sigma\Vert_{\widetilde{B}^{s,s+\alpha-1}},
\end{equation}
and
\begin{equation}\label{eq.gi}
  \sum_{j\in\mathbb{Z}}2^{js}(\Vert g_j\Vert_{L^2}+\Vert \widetilde{g}_j\Vert_{L^2})\le C \Vert G \Vert_{\dot{B}^s_{2,1}} + C\Vert \nabla v\Vert_{\dot{B}^{\frac{N}{2}}_{2,1}}\Vert u\Vert_{\dot{B}^s_{2,1}}.
\end{equation}
Inserting \eqref{eq.fi}-\eqref{eq.gi} into \eqref{eq:sig-u-es} yields
\begin{equation}\label{eq:sig-u-es2}
\begin{aligned}
  &\quad\Vert \sigma\Vert_{\widetilde{L}_t^{\infty}(\widetilde{B}^{s,s+\alpha-1})} + \Vert u\Vert_{\widetilde{L}_t^{\infty}(\dot{B}^s_{2,1})}
  +\int_0^t \Big(\Vert \sigma(\tau)\Vert_{\widetilde{B}^{s+\alpha,s+1}} + \Vert u(\tau)\Vert_{\dot{B}^{s+\alpha}_{2,1}}\Big) \dd \tau \\
  &\lesssim \Vert \sigma_0\Vert_{\widetilde{B}^{s,s+\alpha-1}}+\Vert u_0\Vert_{\dot{B}^s_{2,1}}
  +\int_0^t \Vert \nabla v(\tau)\Vert_{\dot{B}^{\frac{N}{2}}_{2,1}}  \big(\Vert \sigma(\tau)\Vert_{\widetilde{B}^{s,s+\alpha-1}} + \Vert u(\tau)\Vert_{\dot{B}^s_{2,1}} \big) \dd \tau  \\
  &\quad+ \Vert F\Vert_{L^1_t(\widetilde{B}^{s,s+\alpha-1})} +  \Vert G \Vert_{L^1_t(\dot{B}^s_{2,1})}.
\end{aligned}
\end{equation}
With the help of Gronwall's inequality, we conclude the proof of \eqref{eq.lin.pro}.
\end{proof}

\subsection{A priori estimates for the nonlinear system}\label{subsec:apri-nl}
Let us turn our discussion to the nonlinear system \eqref{eq.EAsigma.nl}. It can be viewed as \eqref{eq.lineareqbn0} with $v=u$ and
\begin{equation}\label{eq:FG}
\begin{aligned}
  &F:=-(\gamma-1)\sigma\Div u-T_{\nabla\sigma}\cdot u-R(u,{\nabla\sigma}),\\
  &G := -\mu\Lambda^\alpha (uh(\sigma)) + \mu\, u\Lambda^\alpha h(\sigma)-T_{\nabla {u}}\cdot u-R(u,\nabla {u}).
\end{aligned}
\end{equation}
Define
\begin{equation}\label{eq:XT}
  X(T) :=\Vert \sigma\Vert_{\widetilde{L}^{\infty}_{T}(\widetilde{B}^{\frac{N}{2}+1-\alpha,\frac{N}{2}})}
  + \Vert u\Vert_{\widetilde{L}^{\infty}_{T}(\dot{B}^{\frac{N}{2}+1-\alpha}_{2,1})}
  + \Vert \sigma\Vert_{L^{1}_{T}(\widetilde{B}^{\frac{N}{2}+1,\frac{N}{2}+2-\alpha})}+\Vert u\Vert_{L^{1}_{T}(\dot{B}^{\frac{N}{2}+1}_{2,1})}
\end{equation}
and
\begin{equation}\label{eq:X0}
  X_0 :=\Vert \sigma_0\Vert_{\widetilde{B}^{\frac{N}{2}+1-\alpha,\frac{N}{2}}} +\Vert u_0\Vert_{\dot{B}^{\frac{N}{2}+1-\alpha}_{2,1}}.
\end{equation}

We state the following \emph{a priori} estimate on $X(T)$, assuming $X_0$ is sufficiently small. The uniform bound on $X(T)$ will play an important role in the global well-posedness theory for the system \eqref{eq.EAsigma.nl}.

\begin{proposition}\label{prop.nonlin.pro.nl}
Assume that $(\sigma,u)$ is a smooth solution for the system \eqref{eq.EAsigma.nl} with $\sigma_0 \in \widetilde{B}^{\frac{N}{2}+1-\alpha,\frac{N}{2}}$ and $u_0 \in \dot{B}^{\frac{N}{2}+1-\alpha}_{2,1}$. Then there exist constants $\varepsilon_0=\varepsilon_0(\alpha,N)>0$ and  $C_*= C_*(\alpha, N) > 0$ such that, if $X_0\leq \varepsilon_0$ then
\begin{equation}\label{es:XT-glob}
  X(T)\le C_* \,X_0 ,\qquad \forall~T\ge0.
\end{equation}
\end{proposition}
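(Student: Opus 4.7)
The plan is to regard \eqref{eq.EAsigma.nl} as the paralinearized system \eqref{eq.lineareqbn0} with $v=u$ and source terms $F,G$ from \eqref{eq:FG}, feed it into Proposition~\ref{prop.lin.pro}, bound $F$ and $G$ by a quadratic quantity in $X(T)$ via the product, commutator, and composition estimates of Section~\ref{Preliminary}, and close via a continuity argument on $T\mapsto X(T)$. Applying Proposition~\ref{prop.lin.pro} at $s=\frac{N}{2}+1-\alpha$ and noting $V(T)=\int_0^T\|u\|_{\dot{B}^{\frac{N}{2}+1}_{2,1}}\,d\tau\leq X(T)$ gives
\begin{equation*}
X(T)\leq Ce^{CX(T)}\Bigl(X_0+\|F\|_{L^1_T(\widetilde{B}^{\frac{N}{2}+1-\alpha,\frac{N}{2}})}+\|G\|_{L^1_T(\dot{B}^{\frac{N}{2}+1-\alpha}_{2,1})}\Bigr),
\end{equation*}
so the whole task reduces to proving that each source norm is $\lesssim X(T)^2$.

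For $F=-(\gamma-1)\sigma\Div u-T_{\nabla\sigma}\cdot u-R(u,\nabla\sigma)$, the pressure coupling $\sigma\Div u$ fits into \eqref{eq:prod-es0} with $(s_1,s_1',s_2)=(\tfrac{N}{2}+1-\alpha,\tfrac{N}{2},\tfrac{N}{2})$ and is bounded by $\|\sigma\|_{\widetilde{B}^{\frac{N}{2}+1-\alpha,\frac{N}{2}}}\|u\|_{\dot{B}^{\frac{N}{2}+1}_{2,1}}$, while the Bony remainders are handled by \eqref{eq:prod-es5} with $(s_1,s_2)=(\tfrac{N}{2}+1-\alpha,\tfrac{N}{2})$. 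For the transport piece $T_{\nabla u}\cdot u+R(u,\nabla u)$ in $G$, I would invoke \eqref{eq:prod-es1}--\eqref{eq:prod-es2} with $s_1=s_1'=\tfrac{N}{2}+1-\alpha$, $s_2=\tfrac{N}{2}$, obtaining $\|u\|_{\dot{B}^{\frac{N}{2}+1-\alpha}_{2,1}}\|u\|_{\dot{B}^{\frac{N}{2}+1}_{2,1}}$. The delicate piece is the alignment commutator: Corollary~\ref{corollary.kato_ponce} with $r=r_1=\alpha$ produces
\begin{equation*}
\bigl\|\Lambda^\alpha(uh(\sigma))-u\Lambda^\alpha h(\sigma)\bigr\|_{\dot{B}^{\frac{N}{2}+1-\alpha}_{2,1}}\lesssim\|u\|_{\dot{B}^{\frac{N}{2}+1}_{2,1}}\|h(\sigma)\|_{\dot{B}^{\frac{N}{2}}_{2,1}},
\end{equation*}
and Lemma~\ref{lemma.composition} with Remark~\ref{rmk:compos} plus the elementary embedding $\|\sigma\|_{\dot{B}^{\frac{N}{2}}_{2,1}}\lesssim\|\sigma\|_{\widetilde{B}^{\frac{N}{2}+1-\alpha,\frac{N}{2}}}$ (valid because $2^{j(\alpha-1)}\leq 2^{j_0(\alpha-1)}=\frac{4\lambda}{\mu}$ for $j\leq j_0$, and $\|\sigma\|_{L^\infty}\lesssim X(T)$ stays small along the bootstrap) reduce the right-hand side to $\|u\|_{\dot{B}^{\frac{N}{2}+1}_{2,1}}\|\sigma\|_{\widetilde{B}^{\frac{N}{2}+1-\alpha,\frac{N}{2}}}$. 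Integrating over $[0,T]$ with $u\in L^1_T(\dot{B}^{\frac{N}{2}+1}_{2,1})$ then controls every piece by $X(T)^2$.

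Collecting these estimates yields $X(T)\leq Ce^{CX(T)}(X_0+C'X(T)^2)$. Fix $C_*=4C$ and choose $\varepsilon_0$ so small that $Ce^{CC_*\varepsilon_0}(1+C'C_*^2\varepsilon_0)\leq\tfrac{C_*}{2}$; since $X(0)=X_0\leq\varepsilon_0$ and $T\mapsto X(T)$ is continuous, the set $\{T\geq 0:X(T)\leq C_*X_0\}$ is nonempty, closed, and also open (on it the inequality forces $X(T)\leq\tfrac{C_*}{2}X_0$), hence equals $[0,\infty)$, which is \eqref{es:XT-glob}. The expected main obstacle is the alignment commutator: one must simultaneously trade a full derivative on $u$ (affordable only because the fractional dissipation gives $u\in L^1_T(\dot{B}^{\frac{N}{2}+1}_{2,1})$) for an $\alpha$-th order derivative on $h(\sigma)$, and propagate this through the nonlinear composition $\sigma\mapsto h(\sigma)$ in the hybrid Besov scale. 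This is precisely what dictates the hybrid functional setting $\widetilde{B}^{\frac{N}{2}+1-\alpha,\frac{N}{2}}$ with its $\alpha-1$ gap between low- and high-frequency indices, and the bookkeeping for the pressure and transport pieces has to match this asymmetric regularity.
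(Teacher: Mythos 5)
Your proposal follows essentially the same route as the paper: apply Proposition \ref{prop.lin.pro} at $s=\frac{N}{2}+1-\alpha$, bound $F$ and $G$ by $CX(T)^2$ using Lemma \ref{lemma.Bilinear}, Corollary \ref{corollary.kato_ponce} with $r=r_1=\alpha$, and Lemma \ref{lemma.composition} (with the same $L^\infty$-smallness caveat on $\sigma$), and close with a continuity/bootstrap argument. The indices you chose in the product estimates check out, and your open-closed formulation of the continuity step is equivalent to the paper's contradiction at the first time $X(T)=C_*X_0$.
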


\begin{proof}[Proof of Proposition \ref{prop.nonlin.pro.nl}]
Apply Proposition \ref{prop.lin.pro} with $s =\frac{N}{2}+1-\alpha$ and get
\[
X(T)\leq C e^{C X(T)} \Big(X_0+ \Vert F\Vert_{L^1_T(\widetilde{B}^{\frac{N}{2}+1-\alpha,\frac{N}{2}})} + \Vert G\Vert_{ L^1_T(\dot{B}^{\frac{N}{2}+1-\alpha}_{2,1})} \Big).
\]

Now, we estimate the source terms $F$ and $G$. It follows from Lemma \ref{lemma.Bilinear} that
\begin{align*}
   &\Vert\sigma\Div u\Vert_{L^1_T(\widetilde{B}^{\frac{N}{2}+1-\alpha,\frac{N}{2}})}
  + \Vert T_{\nabla\sigma}\cdot u\Vert_{L^1_T(\widetilde{B}^{\frac{N}{2}+1-\alpha,\frac{N}{2}})}
  + \Vert R(u,{\nabla\sigma})\Vert_{L^1_T(\widetilde{B}^{\frac{N}{2}+1-\alpha,\frac{N}{2}})}\\
  & \leq C \Vert \sigma\Vert_{\widetilde{L}^\infty_T(\widetilde{B}^{\frac{N}{2}+1-\alpha,\frac{N}{2}})}
  \Vert u\Vert_{L^1_T(\dot{B}^{\frac{N}{2}+1}_{2,1})} \le C X^2(T)
\end{align*}
and
\[
   \Vert T_{\nabla u}\cdot u\Vert_{L^1_T(\dot{B}^{\frac{N}{2}+1-\alpha}_{2,1})}
  + \Vert R(u,\nabla u)\Vert_{L^1_T(\dot{B}^{\frac{N}{2}+1-\alpha}_{2,1})}  \le C \Vert u\Vert_{\widetilde{L}^\infty_T(\dot{B}^{\frac{N}{2}+1-\alpha}_{2,1})}
  \Vert u \Vert_{L^1_T(\dot{B}^{\frac{N}{2}+1}_{2,1})}
  \le CX^2(T).
\]
Taking advantage of Corollary \ref{corollary.kato_ponce} and Lemma \ref{lemma.composition}, we obtain
\begin{equation}\label{eq.local.G2}
\begin{split}
  \Vert \Lambda^\alpha (uh(\sigma)) - u \Lambda^\alpha h(\sigma)\Vert_{L^1_T(\dot{B}^{\frac{N}{2}+1-\alpha}_{2,1})} & \le C\Vert u\Vert_{L^1_T(\dot{B}^{\frac{N}{2}+1}_{2,1})}
  \Vert h(\sigma) \Vert_{\widetilde{L}^\infty_T(\dot{B}^{\frac{N}{2}}_{2,1})} \\
  & \leq C \|u \|_{L^1_T(\dot B^{\frac{N}{2}+1}_{2,1})} \|\sigma\|_{\widetilde{L}^\infty_T (\dot B^{\frac{N}{2}}_{2,1})}
  \le C X^2(T).
\end{split}
\end{equation}
Note that when applying \eqref{eq:compest1} in Lemma \ref{lemma.composition}, the constant $C$ depends on $\|\sigma\|_{L^\infty}$. Moreover, when $\gamma>1$, $h$ is defined in $I = [-\frac{\sqrt{\kappa\gamma}}{\gamma-1},+\infty)$, and it is not necessarily smooth at the endpoint $\sigma=-\frac{\sqrt{\kappa\gamma}}{\gamma-1}$ (The endpoint corresponds to vacuum $\rho=0$, which needs to be avoided). Therefore, we may assume e.g. 
\begin{equation}\label{eq:sigmasmall}
\|\sigma\|_{\widetilde{L}^\infty_T(L^\infty)} \leq \tfrac{\sqrt{\kappa\gamma}}{2(\gamma-1)}
\end{equation}
to make sure $C$ is a universal constant. \eqref{eq:sigmasmall} can be enforced by an appropriately chosen smallness condition.

Combining the above estimates with H\"older's inequality yields
\begin{equation}\label{eq:FG-es}
  \Vert F\Vert_{L^{1}_T(\widetilde{B}^{\frac{N}{2}+1-\alpha,\frac{N}{2}})} + \Vert G\Vert_{L^1_T(\dot{B}^{\frac{N}{2}+1-\alpha}_{2,1})} \le C X^2(T).
\end{equation}
Hence, collecting the above estimates we deduce that
\begin{equation}\label{eq:XT}
  X(T)\le Ce^{CX(T)}\big(X_0 + X^2(T)\big),
\end{equation}
where the constant $C=C(\alpha,N)$. Pick $C_*=\max\{4C, 1\}$ and $\varepsilon_0=C_*^{-2}$. We now show \eqref{es:XT-glob} by contradiction. Suppose \eqref{es:XT-glob} is false. By continuity of $X$, there exists a $T>0$ such that $X(T)=C_*X_0$. Then \eqref{eq:XT} implies
\[
X(T)\leq C e^{CC_*X_0}(X_0+C_*^2X_0^2)\leq Ce^{1/4}\cdot 2X_0<
C_*X_0.\]
This leads to a contradiction.

We can further choose a smaller $\varepsilon_0$ to get a smaller $X(T)$ when needed. In particular, since $\widetilde{B}^{\frac{N}{2}+1-\alpha,\frac{N}{2}}\hookrightarrow L^\infty$, we can choose $\varepsilon$ small enough to guarantee \eqref{eq:sigmasmall}.
\end{proof}

Next, we present an improved \emph{a priori} estimate on $X(T)$ for some positive time $T$, without a smallness assumption on $u_0$. This allows us to obtain a stronger local well-posedness result (see Theorem \ref{th.local.solution.nl}). This method has been used on the barotropic compressible Navier-Stokes system, see e.g. \cite[Corollary 10.4]{bahouri2011fourier}.

\begin{proposition}\label{prop.priori.local}
Let $1<\alpha<2$, $\sigma_0 \in \widetilde{B}^{\frac{N}{2}+1-\alpha,\frac{N}{2}}$ and $u_0 \in \dot{B}^{\frac{N}{2}+1-\alpha}_{2,1}$. Assume that $(\sigma,u)$ is a smooth solution for the system \eqref{eq.EAsigma.nl}.
Then there exist a positive time $T$ and a small enough constant $\eta >0$, such that, under the assumption
\begin{equation}\label{assum:sig0}
  \Vert \sigma_0\Vert_{\widetilde{B}^{\frac{N}{2}+1-\alpha,\frac{N}{2}}}\le \eta,
\end{equation}
we have
\begin{align}\label{eq:sig-loc}
  \Vert \sigma\Vert_{\widetilde{L}^{\infty}_{T}(\widetilde{B}^{\frac{N}{2}+1-\alpha,\frac{N}{2}})}
  + \Vert \sigma\Vert_{L^1_T(\widetilde{B}^{\frac{N}{2}+1,\frac{N}{2}+2-\alpha})} \leq C \Vert \sigma_0\Vert_{\widetilde{B}^{\frac{N}{2}+1-\alpha,\frac{N}{2}}},
\end{align}
and
\begin{equation}\label{eq:u-loc}
\begin{aligned}
  X(T) \le C X_0,
\end{aligned}
\end{equation}
with $C = C(\alpha,N)>0$.
\end{proposition}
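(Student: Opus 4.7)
The plan is to absorb the (potentially large) free evolution driven by $u_0$ via a linear decomposition, in the style of \cite[Corollary~10.4]{bahouri2011fourier}. Let $u_L(t):=e^{-\mu t\Lambda^\alpha}u_0$ solve $\partial_t u_L+\mu\Lambda^\alpha u_L=0$ with datum $u_0$, and set $\tilde u:=u-u_L$, so that $\tilde u|_{t=0}=0$. Direct spectral computations yield the uniform bound $\|u_L\|_{\widetilde L^\infty_T(\dot B^{N/2+1-\alpha}_{2,1})}\leq\|u_0\|_{\dot B^{N/2+1-\alpha}_{2,1}}$ and the integrable smoothing estimate $\|u_L\|_{L^1(\mathbb{R}^+;\dot B^{N/2+1}_{2,1})}\lesssim\|u_0\|_{\dot B^{N/2+1-\alpha}_{2,1}}$, so that $\delta(T):=\|u_L\|_{L^1_T(\dot B^{N/2+1}_{2,1})}\to 0$ as $T\to 0^+$ by absolute continuity of the integral. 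Similarly, using Bernstein's inequality on low frequencies to trade one derivative for a factor $2^{j_0}$ followed by a trivial time integration, $\|\Div u_L\|_{L^1_T(\widetilde B^{N/2+1-\alpha,N/2})}\leq C\bigl(2^{j_0}T\|u_0\|_{\dot B^{N/2+1-\alpha}_{2,1}}+\delta(T)\bigr)=:\varepsilon(T)\to 0$.

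Substituting $u=u_L+\tilde u$ in \eqref{eq.EAsigma.nl} and taking $v:=u$ in \eqref{eq.lineareqbn0}, the pair $(\sigma,\tilde u)$ satisfies the paralinearized system with initial data $(\sigma_0,0)$ and sources
\[
F=-\lambda\Div u_L-(\gamma-1)\sigma\Div u-T_{\nabla\sigma}\cdot u-R(u,\nabla\sigma),
\]
\[
G=-\mu\bigl(\Lambda^\alpha(uh(\sigma))-u\Lambda^\alpha h(\sigma)\bigr)-T_{\nabla u}\cdot u-R(u,\nabla u).
\]
Setting
\[
\tilde X(T):=\|\sigma\|_{\widetilde L^\infty_T(\widetilde B^{N/2+1-\alpha,N/2})}+\|\tilde u\|_{\widetilde L^\infty_T(\dot B^{N/2+1-\alpha}_{2,1})}+\|\sigma\|_{L^1_T(\widetilde B^{N/2+1,N/2+2-\alpha})}+\|\tilde u\|_{L^1_T(\dot B^{N/2+1}_{2,1})},
\]
and noting $V(T)\leq\delta(T)+\tilde X(T)$, Proposition~\ref{prop.lin.pro} at regularity $s=N/2+1-\alpha$, combined with Lemma~\ref{lemma.Bilinear} and Corollary~\ref{corollary.kato_ponce} applied to the nonlinear terms of $F,G$ exactly as in the proof of Proposition~\ref{prop.nonlin.pro.nl}, yields
\[
\tilde X(T)\leq Ce^{C(\tilde X(T)+\delta(T))}\Bigl(\|\sigma_0\|_{\widetilde B^{N/2+1-\alpha,N/2}}+\varepsilon(T)+\bigl(\tilde X(T)+\|u_0\|_{\dot B^{N/2+1-\alpha}_{2,1}}\bigr)\bigl(\tilde X(T)+\delta(T)\bigr)\Bigr).
\]

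Closure proceeds by a bootstrap on $\tilde X$. First, pick $M>0$ small enough that $2CM(M+\|u_0\|_{\dot B^{N/2+1-\alpha}_{2,1}})\leq M/8$ and $e^{CM}\leq\sqrt{2}$; next, shrink $T$ so that $e^{C\delta(T)}\leq\sqrt{2}$ and $\varepsilon(T)+\delta(T)(M+\|u_0\|_{\dot B^{N/2+1-\alpha}_{2,1}})\leq M/(8C)$; finally take $\eta$ with $2C\eta\leq M/4$. Under the continuity hypothesis $\tilde X(T')\leq M$, the displayed inequality improves to $\tilde X(T')\leq M/2$, closing the bootstrap on $[0,T]$. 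A further shrinking of $T$ so that $\varepsilon(T)+\|u_0\|_{\dot B^{N/2+1-\alpha}_{2,1}}\delta(T)\lesssim\|\sigma_0\|_{\widetilde B^{N/2+1-\alpha,N/2}}$ then upgrades the bound to the linear $\tilde X(T)\leq C\|\sigma_0\|_{\widetilde B^{N/2+1-\alpha,N/2}}$, which is precisely \eqref{eq:sig-loc}; and \eqref{eq:u-loc} follows from $X(T)\leq\tilde X(T)+\|u_L\|_{\widetilde L^\infty_T(\dot B^{N/2+1-\alpha}_{2,1})}+\delta(T)\leq CX_0$. The principal obstacle is orchestrating the cascade of smallness thresholds ($M$ depending on $\|u_0\|$, $T$ on $(M,\|u_0\|,\|\sigma_0\|)$, $\eta$ on $M$) in a consistent way, and, technically, controlling the source $-\lambda\Div u_L$ in the hybrid Besov norm whose low-frequency index $N/2+1-\alpha$ is below what the natural smoothing of the fractional heat equation delivers, which forces the $2^{j_0}T$ exchange in the low-frequency piece.
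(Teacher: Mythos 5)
Your strategy coincides with the paper's: split $u=u_L+\widetilde u$ with $u_L=e^{-\mu t\Lambda^\alpha}u_0$, apply Proposition \ref{prop.lin.pro} to $(\sigma,\widetilde u)$ with zero velocity data, and close by a continuity argument using that $U_L(T):=\|u_L\|_{L^1_T(\dot B^{N/2+1}_{2,1})}\to0$ as $T\to0$. The semigroup bounds for $u_L$ and your treatment of $\Div u_L$ (trading one derivative for $2^{j_0}$ on low frequencies plus a trivial time integration) are correct and equivalent to the interpolation $\dot B^{\frac N2+2-\alpha}_{2,1}\subset\dot B^{\frac N2+1-\alpha}_{2,1}\cap\dot B^{\frac N2+1}_{2,1}$ used in the paper.

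The gap is in the closure. You compress all nonlinear contributions into the single product $\bigl(\widetilde X(T)+\|u_0\|_{\dot B^{N/2+1-\alpha}_{2,1}}\bigr)\bigl(\widetilde X(T)+\delta(T)\bigr)$, which contains the term $\|u_0\|\,\widetilde X(T)$ with no small factor attached. A term linear in $\widetilde X$ on the right-hand side can only be absorbed if its coefficient is less than $1$, and indeed your first bootstrap condition $2CM(M+\|u_0\|)\le M/8$ is equivalent to $M+\|u_0\|\le 1/(16C)$, which no $M>0$ satisfies once $\|u_0\|\ge 1/(16C)$. As written, your argument therefore only closes for small $u_0$, which is precisely the hypothesis this proposition is designed to dispense with. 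The offending term is in fact spurious: running the bilinear estimates term by term, the factor $\|u\|_{\widetilde L^\infty_T(\dot B^{N/2+1-\alpha}_{2,1})}\le\|u_0\|+\widetilde X$ (the only place $\|u_0\|$ enters without a time integral) is always paired with $U_L(T)$ or with $T$ itself (namely in $u\cdot\nabla u_L$ and $\Div u_L$), while every term whose $L^1$-in-time factor involves $\sigma$ or $\widetilde u$ carries an $\widetilde L^\infty$-in-time factor of $\sigma$ or $\widetilde u$, hence $\widetilde X$. Keeping this structure gives $\widetilde X\le C_1\bigl(\|\sigma_0\|+T\|u_0\|+U_L+U_L^2\bigr)+C_2\widetilde X^2$, with no $\|u_0\|\widetilde X$, after which the continuity argument closes with $\eta=\tfrac{1}{8C_1C_2}$ independent of $\|u_0\|$ once $T$ is shrunk so that $T\|u_0\|+U_L+U_L^2\lesssim\|\sigma_0\|$. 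A minor related slip: your displayed $G$ omits the source $-T_u\cdot\nabla u_L$ that must appear once $T_u\cdot\nabla\widetilde u$ is kept on the left; it is harmless, being exactly one of the $(\|u_0\|+\widetilde X)\,U_L$ terms.
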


\begin{proof}[Proof of Proposition \ref{prop.priori.local}]
We split $u$ into  $u_L+\widetilde{u}$, with $u_L$ satisfying
\begin{equation}\label{eq.fractional.heat}
  \partial_t u_L +\mu\Lambda^\alpha u_L=0, \qquad u_L|_{t=0} = u_0.
\end{equation}
It is obvious that $u_L = e^{- \mu t\Lambda^{\alpha}}u_0$, where $e^{ - \mu t \Lambda^\alpha} = \mathcal{F}^{-1}(e^{-\mu t |\xi|^\alpha})*$ is the fractional heat semigroup operator.
By using Bernstein's inequality and \cite[Proposition 2.2]{hmidi2007global}, we have
\begin{align}\label{eq:UL}
  U_L(t) := \int_0^{t}\Vert u_L(\tau)\Vert_{\dot{B}^{\frac{N}{2}+1}_{2,1}} \dd \tau \le C\sum_{j\in\mathbb{Z}}2^{j(\frac{N}{2}+1-\alpha)}(1-e^{-\mu t2^{j\alpha}})\Vert\dot{\Delta}_ju_0\Vert_{L^2},
\end{align}
and
\begin{align}\label{eq:UL2}
  \Vert u_L \Vert_{\widetilde{L}^\infty(\R^+;\dot{B}^{\frac{N}{2}+1-\alpha}_{2,1})} \le \Vert u_0\Vert_{\dot{B}^{\frac{N}{2}+1-\alpha}_{2,1}}.
\end{align}
Observe that $(\sigma,\widetilde{u})$ satisfies
\begin{equation}
\begin{cases}
  \partial_t {\sigma}+T_u \cdot\nabla\sigma +\lambda\Div{\widetilde{u}} = -\lambda\Div{u_L} - (\gamma-1)\sigma\Div u-T_{\nabla\sigma}\cdot u-R(u,{\nabla\sigma})
  =: \widetilde{F},\\
  \partial_t \widetilde{u}+T_{u}\cdot\nabla \widetilde{u}+\mu\Lambda^\alpha \widetilde{u} + \lambda\nabla {\sigma}
  =\\
  \qquad\qquad\qquad-u\cdot\nabla u_L-\mu \big(\Lambda^\alpha (uh(\sigma)) - u \Lambda^\alpha h(\sigma)\big) -T_{\nabla \widetilde{u}}\cdot u-R(u,\nabla \widetilde{u})=:\widetilde{G}, \\
  ({\sigma}, \widetilde{u})_{t=0} = (\sigma_0,0),
\end{cases}
\end{equation}
with $u= u_L + \widetilde{u}$.
Denote by
\begin{equation}
  \widetilde{X}(T) :=\Vert \sigma\Vert_{\widetilde{L}^{\infty}_{T}(\widetilde{B}^{\frac{N}{2}+1-\alpha,\frac{N}{2}})}
  +\Vert \widetilde{u}\Vert_{\widetilde{L}^{\infty}_{T}(\dot{B}^{\frac{N}{2}+1-\alpha}_{2,1})}
  +\Vert \sigma\Vert_{L^1_{T}(\widetilde{B}^{\frac{N}{2}+1,\frac{N}{2}+2-\alpha})} + \Vert \widetilde{u}\Vert_{L^1_T (\dot{B}^{\frac{N}{2}+1}_{2,1})}.
\end{equation}
According to \eqref{eq:sig-u-es2} of Proposition \ref{prop.lin.pro} with $s= \frac{N}{2} + 1-\alpha$, we infer that
\begin{equation}\label{eq.local.X}
  \widetilde{X}(t) \lesssim \Vert \sigma_0\Vert_{\widetilde{B}^{\frac{N}{2}+1-\alpha,\frac{N}{2}}} + \int_0^t \Vert \nabla u(\tau)\Vert_{\dot{B}^{\frac{N}{2}}_{2,1}} \widetilde{X}(\tau) \dd \tau
  +\Vert \widetilde{F}\Vert_{L^1_t(\widetilde{B}^{\frac{N}{2}+1-\alpha,\frac{N}{2}})} + \Vert \widetilde{G} \Vert_{L^1_t(\dot{B}^{\frac{N}{2}+1-\alpha}_{2,1})}.
\end{equation}
H\"older's inequality implies
\begin{equation}\label{eq.local.R1}
  \int_0^t \Vert \nabla u(\tau)\Vert_{\dot{B}^{\frac{N}{2}}_{2,1}} \widetilde{X}(\tau) \dd \tau \le U_L(t)\widetilde{X}(t) + \widetilde{X}^2(t) \le U_L^2(t)+ 2 \widetilde{X}^2(t).
\end{equation}
Now we calculate the terms involving $\widetilde{F}$ and $\widetilde{G}$. By virtue of H\"older's inequality and the interpolation, we get
\begin{align*}
  \Vert \Div u_L\Vert_{L^1_t(\widetilde{B}^{\frac{N}{2}+1-\alpha,\frac{N}{2}})}&\le \Vert u_L\Vert_{L^1_t(\dot{B}^{\frac{N}{2}+2-\alpha}_{2,1})} +  \Vert u_L \Vert_{L^1_t(\dot{B}^{\frac{N}{2}+1}_{2,1})} \nonumber \\
  & \le \Vert u_L \Vert_{L^1_t (\dot{B}^{\frac{N}{2}+1-\alpha}_{2,1} \cap \dot{B}^{\frac{N}{2}+1}_{2,1})} + \Vert u_L\Vert_{L^1_t(\dot{B}^{\frac{N}{2}+1}_{2,1})} \nonumber \\
  & \le t \Vert u_L\Vert_{\widetilde{L}^\infty_t(\dot{B}^{\frac{N}{2}+1-\alpha}_{2,1})} + 2\Vert u_L\Vert_{L^1_t(\dot{B}^{\frac{N}{2}+1}_{2,1})}\le t\Vert u_0\Vert_{\dot{B}^{\frac{N}{2}+1-\alpha}_{2,1}}+2U_L(t).
\end{align*}
It follows from Lemma \ref{lemma.Bilinear} that
\begin{equation*}
\begin{aligned}
  & \Vert\sigma\Div u\Vert_{L^1_t(\widetilde{B}^{\frac{N}{2}+1-\alpha,\frac{N}{2}})}
  + \Vert T_{\nabla\sigma}\cdot u\Vert_{L^1_t(\widetilde{B}^{\frac{N}{2}+1-\alpha,\frac{N}{2}})}
  + \Vert R(u,{\nabla\sigma})\Vert_{L^1_t(\widetilde{B}^{\frac{N}{2}+1-\alpha,\frac{N}{2}})} \\
  & \leq C \Vert \sigma\Vert_{\widetilde{L}^\infty_t(\widetilde{B}^{\frac{N}{2}+1-\alpha,\frac{N}{2}})}
  \Vert u\Vert_{L^1_t(\dot{B}^{\frac{N}{2}+1}_{2,1})} \le C \big(U_L^2(t) + \widetilde{X}^2(t) \big),
\end{aligned}
\end{equation*}
and
\begin{equation*}
  \Vert u\cdot\nabla u_L\Vert_{L^1_t(\dot{B}^{\frac{N}{2}+1-\alpha}_{2,1})}\le C \Vert u\Vert_{\widetilde{L}^\infty_t(\dot{B}^{\frac{N}{2}+1-\alpha}_{2,1})}\Vert u_L\Vert_{L^1_t(\dot{B}^{\frac{N}{2}+1}_{2,1})}
  \le C\big(U_L^2(t)+\widetilde{X}^2(t) \big),
\end{equation*}
and
\begin{equation*}
\begin{aligned}
  & \quad \Vert T_{\nabla \widetilde{u}}\cdot u\Vert_{L^1_t(\dot{B}^{\frac{N}{2}+1-\alpha}_{2,1})}
  + \Vert R(u,\nabla \widetilde{u})\Vert_{L^1_t(\dot{B}^{\frac{N}{2}+1-\alpha}_{2,1})} \\
  & \le C \Vert \widetilde{u}\Vert_{\widetilde{L}^\infty_t(\dot{B}^{\frac{N}{2}+1-\alpha}_{2,1})}
  \Vert u \Vert_{L^1_t(\dot{B}^{\frac{N}{2}+1}_{2,1})}
  \le C\big(U_L^2(t)+\widetilde{X}^2(t) \big).
\end{aligned}
\end{equation*}
From \eqref{eq.local.G2} we get
\[
  \Vert \Lambda^\alpha (uh(\sigma)) - u \Lambda^\alpha h(\sigma)\Vert_{L^1_t(\dot{B}^{\frac{N}{2}+1-\alpha}_{2,1})}\leq C \|u \|_{L^1_t(\dot B^{\frac{N}{2}+1}_{2,1})} \|\sigma\|_{\widetilde{L}^\infty_t (\dot B^{\frac{N}{2}}_{2,1})}
  \le C\big(U_L^2(t)+\widetilde{X}^2(t)\big) .
\]
Plugging the estimates above into \eqref{eq.local.X}, we have
\begin{equation*}
  \widetilde{X}(t) \le C_1 \Big(\Vert \sigma_0\Vert_{\widetilde{B}^{\frac{N}{2}+1-\alpha,\frac{N}{2}}} +
  t \Vert u_0\Vert_{\dot{B}^{\frac{N}{2}+1-\alpha}_{2,1}} + U_L(t) + U_L^2(t) \Big) + C_2 \widetilde{X}^2(t) ,
\end{equation*}
with $C_1,C_2>0$. Since $\lim_{t\to0}U_L(t)=0$, by letting $T>0$ small enough,
we infer that for every $t\in [0,T]$,
\begin{align}
  \widetilde{X}(t) \leq 2C_1 \Vert \sigma_0\Vert_{\widetilde{B}^{\frac{N}{2}+1-\alpha,\frac{N}{2}}}  + C_2 \widetilde{X}^2(t) .
\end{align}
Let $\eta = \frac{1}{8 C_1 C_2}$ and $C = 4C_1$, a similar continuity argument as in Proposition \ref{prop.nonlin.pro.nl} yields
\[\widetilde{X}(t)< C \Vert \sigma_0\Vert_{\widetilde{B}^{\frac{N}{2}+1-\alpha,\frac{N}{2}}},\quad\forall~t\in[0,T].\]

Finally, combined with \eqref{eq:UL}-\eqref{eq:UL2}, we conclude the proof of \eqref{eq:u-loc}.
\end{proof}

\section{Proof of Theorem \ref{cor.global.solution}}\label{global_solution}
This section is devoted to the proof of Theorem \ref{cor.global.solution}. As a byproduct, we also include the local well-posedness result.

We will mainly prove the following global well-posedess result to the system \eqref{eq.EAsigma.nl}.
\begin{theorem}\label{th.global.solution.nl}
Let $1<\alpha<2$. Consider the system \eqref{eq.EAsigma.nl} with initial data $\sigma_0\in \widetilde{B}^{\frac{N}{2}+1-\alpha,\frac{N}{2}}(\R^N)$ and $u_0\in \dot{B}^{\frac{N}{2}+1-\alpha}_{2,1}(\R^N)$.
There exists a small constant $\varepsilon'>0$, such that if
\begin{equation}\label{assum:sig-u0}
  \Vert \sigma_0\Vert_{ \widetilde{B}^{\frac{N}{2}+1-\alpha,\frac{N}{2}}}
  +\Vert u_0\Vert_{\dot{B}^{\frac{N}{2}+1-\alpha}_{2,1}}\leq \varepsilon',
\end{equation}
then the system \eqref{eq.EAsigma.nl} has a global unique solution $(\sigma,u)$ satisfying
\begin{equation}\label{eq:sig-u-ubdd}
  \Vert \sigma \Vert_{\widetilde{L}^\infty(\R^+;\widetilde{B}^{\frac{N}{2}+1-\alpha,\frac{N}{2}})} + \Vert \sigma\Vert_{L^1(\R^+;\widetilde{B}^{\frac{N}{2}+1,\frac{N}{2}+2-\alpha})}
  + \Vert u\Vert_{\widetilde{L}^\infty(\R^+;\dot{B}^{\frac{N}{2}+1-\alpha}_{2,1})} + \Vert u\Vert_{L^1(\dot{B}^{\frac{N}{2}+1}_{2,1})} \leq C \varepsilon',
\end{equation}
with $C = C(\alpha,N)>0$.
\end{theorem}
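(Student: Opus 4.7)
\textbf{Proof proposal for Theorem \ref{th.global.solution.nl}.}
My plan is to build the global solution by a Friedrichs-type approximation scheme, pass to the limit using the \emph{a priori} bound of Proposition \ref{prop.nonlin.pro.nl}, and then establish uniqueness via a stability estimate in a lower-regularity space. Concretely, I would cut off the initial data in frequency by setting $(\sigma_0^n,u_0^n) := (\dot{S}_n \sigma_0, \dot{S}_n u_0)$ and solve the approximate system
\begin{equation*}
\begin{cases}
\partial_t \sigma^n + \dot{J}_n(\dot{J}_n u^n \cdot \nabla \dot{J}_n \sigma^n) + (\gamma-1)\dot{J}_n(\dot{J}_n\sigma^n \,\Div \dot{J}_n u^n) + \lambda \Div \dot{J}_n u^n = 0, \\[2pt]
\partial_t u^n + \dot{J}_n(\dot{J}_n u^n \cdot \nabla \dot{J}_n u^n) + \mu \Lambda^\alpha \dot{J}_n u^n + \lambda \nabla \dot{J}_n \sigma^n = -\mu \dot{J}_n\bigl([\Lambda^\alpha, \dot{J}_nu^n] h(\dot{J}_n\sigma^n)\bigr),
\end{cases}
\end{equation*}
where $\dot{J}_n = \dot{S}_n$ is the frequency projector. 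This becomes an ODE in the Banach space $L^2_n := \{ f \in L^2 : \supp \widehat f \subset \{2^{-n}\le |\xi|\le 2^{n}\}\}$ (after a small tweak to avoid the vanishing at the origin of $\dot S_n$ one may add a cutoff away from low frequencies, cf.~Lemma \ref{lem:Bes-K}). The Cauchy--Lipschitz theorem produces a maximal-in-time smooth solution $(\sigma^n, u^n)$.

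Since the projectors $\dot{J}_n$ are self-adjoint idempotent and commute with the Littlewood--Paley blocks, the computations in Section \ref{priori} go through verbatim for $(\sigma^n, u^n)$, giving the same \emph{a priori} estimate
\begin{equation*}
X^n(T) := \|\sigma^n\|_{\widetilde{L}^\infty_T(\widetilde{B}^{\frac{N}{2}+1-\alpha,\frac{N}{2}})} + \|u^n\|_{\widetilde{L}^\infty_T(\dot{B}^{\frac{N}{2}+1-\alpha}_{2,1})} + \|\sigma^n\|_{L^1_T(\widetilde{B}^{\frac{N}{2}+1,\frac{N}{2}+2-\alpha})} + \|u^n\|_{L^1_T(\dot{B}^{\frac{N}{2}+1}_{2,1})} \le C_* X_0^n,
\end{equation*}
uniformly in $n$ and $T$, provided $\varepsilon'$ in \eqref{assum:sig-u0} is picked smaller than the threshold $\varepsilon_0$ of Proposition \ref{prop.nonlin.pro.nl}. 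This uniform bound combined with a continuation criterion extends $(\sigma^n, u^n)$ globally in time. I would then extract a weak-$*$ limit $(\sigma, u)$ by standard compactness (Aubin--Lions in local $L^2$ spaces, using bounds on $\partial_t\sigma^n$ and $\partial_t u^n$ read off from the equations) and verify that $(\sigma,u)$ solves \eqref{eq.EAsigma.nl} by passing to the limit termwise in Bony's decomposition of the nonlinearities, relying on the product and composition estimates of Lemmas \ref{lemma.Bilinear} and \ref{lemma.composition}. The bound \eqref{eq:sig-u-ubdd} then follows by lower semicontinuity of the norms. To ensure $\rho = 1 + h(\sigma) > 0$ one uses $\widetilde{B}^{\frac{N}{2}+1-\alpha,\frac{N}{2}} \hookrightarrow L^\infty$ together with the smallness \eqref{assum:sig-u0}, which guarantees \eqref{eq:sigmasmall} and hence $h$ is smooth on the range of $\sigma$.

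For uniqueness, given two solutions $(\sigma_1,u_1),(\sigma_2,u_2)$ with the regularity of \eqref{eq:sig-u-ubdd}, I would set $\delta\sigma = \sigma_2 - \sigma_1$, $\delta u = u_2 - u_1$ and derive a linear system for $(\delta\sigma,\delta u)$ with source terms quadratic in the differences. The expected estimate is in the one-derivative-lower space $\widetilde{B}^{\frac{N}{2}-\alpha,\frac{N}{2}-1}\times \dot B^{\frac{N}{2}-\alpha}_{2,1}$, where all paraproduct and remainder terms can be closed by Lemma \ref{lemma.Bilinear} and the commutator estimate of Corollary \ref{corollary.kato_ponce}. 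A Proposition \ref{prop.lin.pro}-type estimate (applied at $s=\frac{N}{2}-\alpha$) combined with Gronwall in time yields $\delta\sigma \equiv 0$, $\delta u \equiv 0$.

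The main obstacle I anticipate is the stability/uniqueness step: the commutator term $[\Lambda^\alpha, u]h(\sigma)$ and the pressure-type coupling $\lambda\nabla\sigma$ force one to use the hybrid low/high-frequency splitting also for the difference $(\delta\sigma,\delta u)$, and the available regularity of $(\sigma_i,u_i)$ is exactly at the scaling level, so one loses a derivative in controlling $\delta\sigma\,\Div u_i$ in the high-frequency regime unless one carefully matches the indices of Lemma \ref{lemma.Bilinear}(4)--(5). A secondary difficulty is ensuring, during the convergence step, that the highly nonlinear term $\Lambda^\alpha(uh(\sigma))-u\Lambda^\alpha h(\sigma)$ converges in the sense of distributions; here Corollary \ref{corollary.kato_ponce} together with strong convergence of $(\sigma^n,u^n)$ on compact sets in a slightly rougher Besov space is the key ingredient. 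Modulo these points, the rest of the argument is a direct implementation of the Danchin scheme adapted to the hybrid space $\widetilde{B}^{\frac{N}{2}+1-\alpha,\frac{N}{2}}$.
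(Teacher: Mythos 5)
Your overall scheme is sound and, at the level of ingredients, matches the paper's: Friedrichs approximation, the hybrid-Besov \emph{a priori} estimate of Proposition \ref{prop.nonlin.pro.nl}, compactness to pass to the limit, and uniqueness via a stability estimate one derivative below scaling in $\widetilde{B}^{\frac{N}{2}-\alpha,\frac{N}{2}-1}\times\dot B^{\frac{N}{2}-\alpha}_{2,1}$ (exactly the space the paper uses, with the same caveat that smallness of $\|\sigma_i\|$ is needed to absorb $\sigma_2\Div\delta u$). The structural difference is in how globality is obtained. You run the approximation globally: the uniform-in-$n$, uniform-in-$T$ bound from Proposition \ref{prop.nonlin.pro.nl} keeps the $\dot L^2_n$ norm finite, so each approximate solution is global, and you pass to the limit once on all of $\R^+$. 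The paper instead first proves a separate local existence theorem (Theorem \ref{th.local.solution.nl}), built on the splitting $u=u_L+\widetilde u$ and Proposition \ref{prop.priori.local} so that only $\sigma_0$ need be small, and then upgrades to a global solution by a continuation argument at the level of the exact solution. Your route is more direct for the small-data global statement; the paper's route additionally yields the local well-posedness result with large $u_0$ as a byproduct. Both are legitimate, and your claim that the estimates of Section \ref{priori} survive insertion of the projectors is correct since $\dot J_n$ commutes with the dyadic blocks.

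The one step you should not wave through is the convergence of $\Lambda^\alpha(u^nh(\sigma^n))-u^n\Lambda^\alpha h(\sigma^n)$ in the distributional limit. You propose to combine Corollary \ref{corollary.kato_ponce} with strong convergence of $(\sigma^n,u^n)$ on compact sets in a rougher Besov space, but local compactness alone is not sufficient here: $\Lambda^\alpha$ is nonlocal, so $\langle\Lambda^\alpha(u^nh(\sigma^n)),\phi\rangle$ for compactly supported $\phi$ still depends on the values of $u^nh(\sigma^n)$ on all of $\R^N$, and the Aubin--Lions step only gives convergence after multiplication by cutoffs $\phi_k$. The paper resolves this by writing $1=\chi_k+\sum_{j\ge k}\varphi_j$ and estimating the far-field pieces through the explicit kernel representation \eqref{def:Lam-alp}, where $|x-y|^{-N-\alpha}$ for $|x|\lesssim 2^k$, $|y|\sim 2^j$ produces a summable factor $2^{-j\alpha}$ that makes the tail uniformly small in $n$; the near-field piece is then handled by the commutator estimate and local strong convergence as you suggest. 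Without some such quantitative control of the tails (or an a priori uniform integrability of $u^nh(\sigma^n)$ against the kernel), the limit identification is incomplete. The rest of your plan, including the positivity of $\rho$ via the embedding $\widetilde{B}^{\frac{N}{2}+1-\alpha,\frac{N}{2}}\hookrightarrow L^\infty$ and \eqref{eq:sigmasmall}, is consistent with the paper.
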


Based on Theorem \ref{th.global.solution.nl}, we can immediately present the proof of Theorem \ref{cor.global.solution}. Indeed, noticing that $\rho-1 =h(\sigma)$ and $\sigma = h^{-1}(\rho-1)$ with $h^{-1}$ the inverse function of $h$, the assumption \eqref{eq:cond0} and Lemma \ref{lemma.composition} ensure the condition \eqref{assum:sig-u0}, and thus according to \eqref{eq:sig-u-ubdd} in Theorem \ref{th.global.solution.nl} and Lemma \ref{lemma.composition}, we can conclude \eqref{eq:rho>0} and \eqref{eq:rho-u-bdd0} by letting $\varepsilon$ small enough. Similarly, the higher regularity \eqref{eq:rho-u-hreg} of $(\rho,u)$ can be obtained from the corresponding estimates on $(\sigma,u)$, presented in Proposition \ref{prop.a.4}.


%

\subsection{Local existence}
Let us start with the statement of the following local well-posedness result.

\begin{theorem}[Local well-posedness]\label{th.local.solution.nl}
Let $1<\alpha<2$. Consider the system \eqref{eq.EAsigma.nl} with initial data $\sigma_0\in \widetilde{B}^{\frac{N}{2}+1-\alpha,\frac{N}{2}}(\R^N)$ and $u_0\in \dot{B}^{\frac{N}{2}+1-\alpha}_{2,1}(\R^N)$. There exists a constant $\eta>0$ such that if
\begin{equation}\label{assum:sig1}
  \Vert \sigma_0\Vert_{\widetilde{B}^{\frac{N}{2}+1-\alpha,\frac{N}{2}}}\le \eta,
\end{equation}
then there exists a positive time $T$ such that system \eqref{eq.EAsigma.nl} has a unique solution $(\sigma,u)$ on $[0,T[\times\mathbb{R}^N$ which satisfy for every $T'<T$,
\begin{equation}\label{eq:sig-u-bdd}
\begin{split}
  & \qquad u\in C_b([0,T'];\dot{B}^{\frac{N}{2}+1-\alpha}_{2,1})\cap L^1([0,T'];{\dot{B}^{\frac{N}{2}+1}_{2,1}}),\quad \textrm{and} \\
  &  \Vert \sigma\Vert_{\widetilde{L}^{\infty}_{T}(\widetilde{B}^{\frac{N}{2}+1-\alpha,\frac{N}{2}})}
  + \Vert \sigma\Vert_{L^1_T(\widetilde{B}^{\frac{N}{2}+1,\frac{N}{2}+2-\alpha})} \leq C \Vert \sigma_0\Vert_{\widetilde{B}^{\frac{N}{2}+1-\alpha,\frac{N}{2}}}.
\end{split}
\end{equation}
\end{theorem}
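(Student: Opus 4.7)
The plan is to construct a local solution via a Friedrichs-type approximation scheme combined with the a priori estimate of Proposition \ref{prop.priori.local}, and then establish uniqueness in a space of strictly lower regularity. To accommodate the possibly large initial velocity, I would carry through the decomposition $u=u_L+\widetilde u$ with $u_L:=e^{-\mu t\Lambda^\alpha}u_0$, exactly as in Proposition \ref{prop.priori.local}. The key fact is that $U_L(t)=\int_0^t\|u_L(\tau)\|_{\dot B^{N/2+1}_{2,1}}\dd\tau$ tends to $0$ as $t\to 0^+$ by dominated convergence, so the nonlinear contributions involving $u_L$ can be made arbitrarily small by shrinking $T$; this is precisely what permits the absence of a smallness assumption on $u_0$.

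First I would define the truncated system by applying the Friedrichs projector $J_n:=\mathcal{F}^{-1}\mathbf{1}_{\{n^{-1}\leq|\xi|\leq n\}}\mathcal{F}$ to every nonlinear term in \eqref{eq.EAsigma.nl} and regularising the initial data to $(J_n\sigma_0,J_nu_0)$. The resulting problem becomes an ODE in the closed subspace $L^2_n:=\{f\in L^2:\supp\widehat f\subset\{n^{-1}\leq|\xi|\leq n\}\}$ and admits, by the Cauchy-Lipschitz theorem together with preservation of the frequency support, a unique global solution $(\sigma^n,u^n)$. Since $J_n$ commutes with every dyadic block, the proof of Proposition \ref{prop.priori.local} goes through essentially verbatim and yields a common existence time $T>0$ and the uniform bound
\begin{equation*}
  \|\sigma^n\|_{\widetilde L^\infty_T(\widetilde B^{\frac N2+1-\alpha,\frac N2})}+\|\sigma^n\|_{L^1_T(\widetilde B^{\frac N2+1,\frac N2+2-\alpha})}+\|u^n\|_{\widetilde L^\infty_T(\dot B^{\frac N2+1-\alpha}_{2,1})}+\|u^n\|_{L^1_T(\dot B^{\frac N2+1}_{2,1})}\leq CX_0,
\end{equation*}
provided $\|\sigma_0\|_{\widetilde B^{\frac N2+1-\alpha,\frac N2}}\le\eta$ is sufficiently small. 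Inserting this bound back into the equations and invoking Lemma \ref{lemma.Bilinear} and Corollary \ref{corollary.kato_ponce}, I would also obtain a uniform bound on $(\partial_t\sigma^n,\partial_t u^n)$ in some weaker Chemin-Lerner space. An Aubin-Lions argument on any compact $K\subset\R^N$ then produces a subsequence converging strongly in a low-regularity Besov norm to a limit $(\sigma,u)$; Besov interpolation against the uniform critical bound upgrades this to strong convergence at any regularity strictly below critical, which is enough to pass to the limit in every nonlinear term and to conclude that $(\sigma,u)$ solves \eqref{eq.EAsigma.nl} with the regularity stated in \eqref{eq:sig-u-bdd}.

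The main obstacle will be \textbf{uniqueness}. Given two solutions $(\sigma_i,u_i)$, $i=1,2$, satisfying \eqref{eq:sig-u-bdd}, one cannot close an energy estimate for $(\delta\sigma,\delta u):=(\sigma_1-\sigma_2,u_1-u_2)$ at the critical level, because $u_i$ is not Lipschitz and the transport term $u_i\cdot\nabla\delta\sigma$ loses a derivative. I would therefore estimate $(\delta\sigma,\delta u)$ in the strictly lower-regularity space $\widetilde B^{\frac N2-\alpha,\frac N2-1}\times\dot B^{\frac N2-\alpha}_{2,1}$, where Lemma \ref{lemma.Bilinear} affords enough slack in the product estimates and the fractional smoothing on $\delta u$ still supplies an $\alpha$-derivative gain in integrated time. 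The pressure coupling is handled by the cross-term manipulation of Proposition \ref{prop.lin.pro}, while the nonlinear alignment difference, which splits as
\begin{equation*}
  \Lambda^\alpha\bigl(\delta u\,h(\sigma_1)\bigr)-\delta u\,\Lambda^\alpha h(\sigma_1)+\Lambda^\alpha\bigl(u_2[h(\sigma_1)-h(\sigma_2)]\bigr)-u_2\Lambda^\alpha[h(\sigma_1)-h(\sigma_2)],
\end{equation*}
is controlled via Corollary \ref{corollary.kato_ponce} together with the composition estimate \eqref{eq:compest2}. The Gronwall multiplier appearing in this scheme is integrable on $[0,T]$ because $\sigma_i\in L^1_T(\widetilde B^{\frac N2+1,\frac N2+2-\alpha})$ and $u_i\in L^1_T(\dot B^{\frac N2+1}_{2,1})$, whence $(\delta\sigma,\delta u)\equiv 0$ on $[0,T]$ and uniqueness follows.
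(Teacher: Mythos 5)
Your proposal follows essentially the same route as the paper: Friedrichs approximation with the $u=u_L+\widetilde u$ splitting and the a priori bound of Proposition \ref{prop.priori.local} (this is exactly why only $\sigma_0$ needs to be small), compactness on compact sets plus the Fatou property to recover critical regularity, and uniqueness for the difference measured in $\widetilde B^{\frac N2-\alpha,\frac N2-1}\times\dot B^{\frac N2-\alpha}_{2,1}$ via Proposition \ref{prop.lin.pro}, Lemma \ref{lemma.Bilinear}, Corollary \ref{corollary.kato_ponce} and \eqref{eq:compest2}. Two points you wave through deserve care: passing to the limit in the nonlocal commutator $\Lambda^\alpha(u^nh(\sigma^n))-u^n\Lambda^\alpha h(\sigma^n)$ is not immediate from local strong convergence (the paper controls the far-field contributions through the kernel decay $|x-y|^{-N-\alpha}$), and in the uniqueness step the terms carrying $\Vert\sigma_i\Vert_{L^\infty_t(\dot B^{N/2}_{2,1})}$ against $\Vert\delta u\Vert_{L^1_t(\dot B^{N/2}_{2,1})}$ are not Gronwall terms but must be absorbed by the left-hand dissipation using the smallness $\eta$.
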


The local solution is constructed through a standard approximation by the paralinearized equations \eqref{eq.lineareqbn0} and applying \emph{a priori} estimates to pass to the limit. We sketch the proof in below, with special attention to the nonlocal alignment term that needs a careful treatment.

The assumption \eqref{assum:sig1} only requires smallness on $\sigma_0$, thanks to Proposition \ref{prop.priori.local}. It is possible to obtain local well-posedness without such condition (see \cite{danchin2014local} on the compressible Navier-Stokes system). Since our global well-posedness result requires a stronger smallness assumption  \eqref{assum:sig-u0}, we do not make an effort to remove this smallness condition.

\begin{proof}[Proof of Theorem \ref{th.local.solution.nl}: existence]
We first construct the following approximate system:
\begin{equation}\label{eq:app-sys}
  \frac{d}{dt}
  \left (\begin{aligned} &\sigma^n\\ & \bar{u}^n
  \end{aligned}\right )
  =\left (\begin{aligned} &F_n( \sigma^n, \bar{u}^n) \\
  & G_n(\sigma^n,\bar{u}^n)
  \end{aligned}\right ),\ \
  \left (\begin{aligned}
  & \sigma^n\\ &\bar{u}^n
  \end{aligned} \right )_{t=0}
  =\left ( \begin{aligned} &\mathcal{J}_n \sigma_0\\ & \;\;0
\end{aligned}\right ),
\end{equation}
where $\mathcal{J}_n $ is the Friedrichs projector defined by
\[\widehat{\mathcal{J}_n f}(\xi) := 1_{\mathcal{C}_n}(\xi) \widehat{f}(\xi),\quad\text{with}\quad
\mathcal{C}_n:=\{\xi\in \mathbb{R}^N;\ n^{-1}\le |\xi|\le n\},\]
and $u^n = \bar{u}^n +u^n _{L}$ with
$u^n _L$ satisfying
\begin{equation*}
  \partial_t u_L^n + \mu\Lambda^\alpha u_L^n = 0,\quad u_L^n|_{t=0} = \mathcal{J}_n u_0,
\end{equation*}
and
\begin{equation*}
  F_n(\sigma^n,\bar{u}^n) = - \lambda \mathcal{J}_n \Div \bar{u}^n - \lambda \mathcal{J}_n \Div u_L^n - \mathcal{J}_n (u^n\cdot\nabla \sigma^n)
  - (\gamma -1)\mathcal{J}_n(\sigma^n \Div u^n ),
\end{equation*}
\begin{equation*}
\begin{aligned}
  G_n(\sigma^n,\bar{u}^n) = - \lambda \mathcal{J}_n \big(\nabla \sigma^n \big) - \mu\mathcal{J}_n \Lambda^\alpha \bar{u}^n
  -\mathcal{J}_n (u^n \cdot \nabla u^n )- \mu \mathcal{J}_n \Big(\Lambda^\alpha \big(u^nh(\sigma^n)\big)-u^n\Lambda^\alpha \big(h(\sigma^n)\big)\Big).
\end{aligned}
\end{equation*}
In addition, we define the set 
\[\dot{L}^2_n :=\{f\in L^2(\mathbb{R}^N );\  \supp \widehat{f}\subset \mathcal{C}_n\}.\]
It is easy to see that the map
$(\sigma^n ,\bar{u}^n )\mapsto \big(F_n (\sigma^n ,\bar{u}^n ),G_n (\sigma^n ,\bar{u}^n )\big)$
is locally Lipschitz continuous and also continuous with respect to $t$ in $(\dot{L}^2_n)^{N+1}$.
Via the Cauchy-Lipschitz theorem, there is a unique local solution $(\sigma^n ,\bar{u}^n )\in C^1([0,T^{*}_n );(\dot{L}^2_n)^{N+1})$ to \eqref{eq:app-sys}.

Since $\mathcal{J}_n $ is uniformly bounded from $L^2$ to $L^2$, from \eqref{eq:UL} and \eqref{eq:UL2},
we see that $u^n_L$ is bounded uniformly in $n$, and satisfies
\[\Vert u^n_L\Vert_{\widetilde{L}^{\infty}_{T}(\dot{B}^{N/2+1-\alpha}_{2,1})}
  + \Vert u^n_L\Vert_{L^1_T(\dot{B}^{N/2+1}_{2,1})}\le C \|u_0\|_{\dot B^{N/2+1-\alpha}_{2,1}}.\]
Arguing as obtaining the \textit{a priori} estimate in Proposition \ref{prop.priori.local},
under the assumption \eqref{assum:sig1} with small $\eta>0$, we infer that
$(\sigma^n,u^n)$ is uniformly-in-$n$ bounded and satisfies that for some $T>0$,
\begin{align}\label{eq:sig-unif-loc}
  \Vert \sigma^n\Vert_{\widetilde{L}^{\infty}_{T}(\widetilde{B}^{\frac{N}{2}+1-\alpha,\frac{N}{2}})}
  + \Vert \sigma^n\Vert_{L^1_T(\widetilde{B}^{\frac{N}{2}+1,\frac{N}{2}+2-\alpha})} \leq C \Vert \sigma_0\Vert_{\widetilde{B}^{\frac{N}{2}+1-\alpha,\frac{N}{2}}},
\end{align}
\begin{equation}\label{eq:sig-u-unif}
\begin{aligned}
  \Vert \sigma^n\Vert_{\widetilde{L}^\infty_T(\widetilde{B}^{\frac{N}{2}+1-\alpha,\frac{N}{2}})}+\Vert u^n\Vert_{\widetilde{L}^\infty_T(\dot{B}^{\frac{N}{2}+1-\alpha}_{2,1})}
  +\Vert \sigma^n\Vert_{L^1_T(\widetilde{B}^{\frac{N}{2}+1,\frac{N}{2}+2-\alpha})}
  + \Vert u^n\Vert_{L^{1}_{T}(\dot{B}^{\frac{N}{2}+1}_{2,1})}\le C X_0,
\end{aligned}
\end{equation}
where $X_0$ is given by \eqref{eq:X0}.
As a result, we have
\[\sigma^n \in \widetilde{L}^{\infty}_T(\widetilde{B}^{\frac{N}{2}+1-\alpha,\frac{N}{2}})\cap L^{1}_T({\widetilde{B}^{\frac{N}{2}+1,\frac{N}{2}+2-\alpha}})\quad\text{and}\quad
 u^n \in  \widetilde{L}^{\infty}_T(\dot{B}^{\frac{N}{2}+1-\alpha}_{2,1})\cap L^1_{T}(\dot{B}^{\frac{N}{2}+1}_{2,1}),\]
uniformly in $n$.
The interpolation also implies that $u^n \in \widetilde{L}^r_T(\dot B^{\frac{N}{2}+1 - \frac{(r-1)\alpha}{r}}_{2,1} )$
for every $r\in [1,\infty]$.

Let $(\tilde{\sigma}^n,\tilde{u}^n)$ be the solution of the following linear system
\begin{align}\label{eq:sig-u-tild}
\begin{cases}
  \partial_t \tilde{\sigma}^n + \lambda \Div \tilde{u}^n = 0, \\
  \partial_t \tilde{u}^n + \mu \Lambda^\alpha \tilde{u}^n + \lambda\nabla \tilde{\sigma}^n =0, \\
  (\tilde{\sigma}^n , \tilde{u}^n) = (\mathcal{J}_n \sigma_0, 0).
\end{cases}
\end{align}
According to Proposition \ref{prop.lin.pro} with $(v,F,G)\equiv (0,0,0)$, we deduce
\begin{align*}
  \Vert \tilde\sigma^n\Vert_{\widetilde{L}^\infty_T(\widetilde{B}^{\frac{N}{2}+1-\alpha,\frac{N}{2}})}
  + \Vert \tilde{u}^n\Vert_{\widetilde{L}^\infty_T(\dot{B}^{\frac{N}{2}+1-\alpha}_{2,1})}
  +\Vert \tilde\sigma^n\Vert_{L^1_T(\widetilde{B}^{\frac{N}{2}+1,\frac{N}{2}+2-\alpha})}
  + \Vert \tilde{u}^n\Vert_{L^{1}_{T}(\dot{B}^{\frac{N}{2}+1}_{2,1})}
  \leq C \|\sigma_0\|_{\widetilde{B}^{\frac{N}{2}+1-\alpha,\frac{N}{2}}}.
\end{align*}
Due to the fact that $\{(\tilde{\sigma}^n, \tilde{u}^n)\}_{n\in\N}$ is a Cauchy sequence in the considered space, there exists functions $(\tilde{\sigma},\tilde{u})$ such that $\tilde\sigma^n \rightarrow \tilde\sigma$ in $\widetilde{L}^\infty_T(\widetilde{B}^{\frac{N}{2}+1-\alpha,\frac{N}{2}}) \cap L^1_T(\widetilde{B}^{\frac{N}{2}+1,\frac{N}{2}+2-\alpha})$ and $\tilde{u}^n\rightarrow \tilde{u}$ in $\widetilde{L}^\infty_T(\dot B^{\frac{N}{2}+1-\alpha}_{2,1})\cap L^1_T(\dot B^{\frac{N}{2}+1}_{2,1})$.

Now denote by $\bar{\sigma}^n := \sigma^n - \tilde{\sigma}^n$ for every $n\in \N$.
In order to gain more time-continuity information, we consider $(\partial_t \bar{\sigma}^n, \partial_t \bar{u}^n)$.
Note that
\[ \partial_t \bar{\sigma}^n = - \lambda \mathcal{J}_n \Div u^n - \lambda \Div \tilde{u}^n - \mathcal{J}_n (u^n\cdot\nabla \sigma^n)
  - (\gamma -1)\mathcal{J}_n(\sigma^n \Div u^n ),\]
and by virtue of the above uniform estimates and Lemma \eqref{lemma.Bilinear}, we have
\begin{align}\label{eq:par-t-sig}
  \Vert\partial_t \bar{\sigma}^n \Vert_{L^{\frac{\alpha}{\alpha-1}}_T(\dot{B}^{\frac{N}{2}-1}_{2,1})}
  \lesssim \Vert u^n \Vert_{L^{\frac{\alpha}{\alpha-1}}_T(\dot{B}^{\frac{N}{2}}_{2,1})}
  + \|\tilde{u}^n\|_{L^{\frac{\alpha}{\alpha-1}}_T(\dot B^{\frac{N}{2}}_{2,1})}
  + \Vert u^n\Vert_{L^{\frac{\alpha}{\alpha-1}}_T(\dot{B}^{\frac{N}{2}}_{2,1})}
  \Vert \sigma^n\Vert_{L^{\infty}_T(\dot{B}^{\frac{N}{2}}_{2,1})} < \infty.
\end{align}
Thus, $\bar{\sigma}^n$ is uniformly bounded in
\begin{equation}\label{eq.b-td.nl}
  C([0,T];\dot{B}^{\frac{N}{2}}_{2,1})\cap C^{\frac{1}{\alpha}}([0,T];\dot{B}^{\frac{N}{2}-1}_{2,1}).
\end{equation}
Meanwhile, recalling the equation of $\bar{u}^n $ in \eqref{eq:app-sys},
and with the help of Lemma \ref{lemma.Bilinear} and Corollary \ref{corollary.kato_ponce}, we obtain
\begin{equation}\label{eq:par-t-u}
\begin{aligned}
  \Vert\partial_t\bar{u}^n\Vert_{L^{\frac{\alpha}{\alpha-1}}_T
  (\widetilde{B}^{\frac{N}{2}-1, \frac{N}{2} -\alpha})} 
  & \lesssim \Vert \sigma^n \Vert_{L^{\frac{\alpha}{\alpha-1}}_T(\dot{B}^{\frac{N}{2}}_{2,1})}
  + \Vert \bar{u}^n\Vert_{L^{\frac{\alpha}{\alpha-1}}_T(\dot{B}^{\frac{N}{2}}_{2,1})} \\
  &\quad + \Vert u^n\Vert_{L^{\frac{\alpha}{\alpha-1}}_T(\dot{B}^{\frac{N}{2}}_{2,1})}
  \Big(\Vert \sigma^n\Vert_{L^{\infty}_T(\dot{B}^{\frac{N}{2}}_{2,1})}
  + \Vert  u^n\Vert_{L^{\infty}_T(\dot{B}^{\frac{N}{2}+1-\alpha}_{2,1})}\Big) < \infty.
\end{aligned}
\end{equation}
So, $\bar{u}^n $ is uniformly bounded in
\begin{equation}\label{eq.b-tu.nl}
  C([0,T];\dot{B}^{\frac{N}{2}+1-\alpha}_{2,1})\cap
  C^{\frac{1}{\alpha}}([0,T];\widetilde{B}^{\frac{N}{2}-1, \frac{N}{2} -\alpha}).
\end{equation}

Let $\{\phi_k\}_{k\in \mathbb{N}}\in C^\infty_c(\R^N)$ be a sequence of bump functions which are supported in
$B(0, k+1)$ and equal to $1$ on $B(0, k)$.
According to \eqref{eq.b-td.nl}-\eqref{eq.b-tu.nl} and Lemma \ref{lem:Bes-K},
we have that $\{\phi_k \bar{\sigma}^n\}_{n\ge 1}$ is uniformly bounded in
$C([0,T];{B}^{\frac{N}{2}}_{2,1})\cap C^{\frac{1}{\alpha}}([0,T];{B}^{\frac{N}{2}-1}_{2,1})$
for all $k \in \mathbb{N}$,
and $\{\phi_k \bar{u}^n\}_{n\ge 1}$ is uniformly bounded in
$C([0,T];{B}^{\frac{N}{2}+1-\alpha}_{2,1})\cap C^{\frac{1}{\alpha}}([0,T];{B}^{\frac{N}{2}-\alpha}_{2,1})$
for all $k \in \mathbb{N}$.
Note that the map $f \mapsto \phi_k f$ is compact from ${B}^{\frac{N}{2}}_{2,1}$ to ${B}^{\frac{N}{2}-1}_{2,1}$,
and also from ${B}^{\frac{N}{2}+1-\alpha}_{2,1}$ to ${B}^{\frac{N}{2}-\alpha}_{2,1}$.
Thanks to Ascoli's theorem and the diagonal process, we can find a subsequence of $\{(\phi_k\bar{\sigma}^n ,\phi_k\bar{u}^n)\}_{n\ge 1}$
(still uses this notation)
satisfying that for all $k\in \mathbb{N}$ and as $n\rightarrow \infty$,
$\phi_k \bar{\sigma}^n$ converges to $\bar{\sigma}_k$ in  $C([0,T];{B}^{\frac{N}{2}-1}_{2,1})$,
and $\phi_k\bar{u}^n $  converges to $\bar{u}_k$ in $ C([0,T];{B}^{\frac{N}{2}-\alpha}_{2,1}) $.
Define
$(\bar{\sigma},\bar{u}):= (\bar{\sigma}_k,\bar{u}_k)$ for $x\in B(0,k)$.
Since $\phi_k \phi_{k+1}=\phi_k$, we infer that $(\bar{\sigma},\bar{u})$ is well-defined.
Hence, for all $\phi\in C_c^\infty(\mathbb{R}^N)$, we have
\begin{align*}
  (\phi \bar{\sigma}^n ,\phi\bar{u}^n) \rightarrow (\phi \bar{\sigma}, \phi\bar{u}),\quad \textrm{in} \;\; C([0,T];{B}^{\frac{N}{2}-1}_{2,1}\times{B}^{\frac{N}{2}-\alpha}_{2,1}).
\end{align*}
Via Fatou's property for Besov space (see \cite[Thm. 2.25]{bahouri2011fourier}) and the uniform estimates of $(\phi \bar{\sigma}^n ,\phi\bar{u}^n )$, we moreover get
\begin{align}\label{eq:sig-u-bdd3}
  \sigma = \bar{\sigma} + \tilde{\sigma}  \in \widetilde{L}^{\infty}_{T}(\widetilde{B}^{\frac{N}{2}+1 -\alpha,\frac{N}{2}})
  \cap L^1_T(\widetilde{B}^{\frac{N}{2}+1,\frac{N}{2}+2-\alpha}) ,\
  u = \bar{u} + u_L \in \widetilde{L}^{\infty}_{T}(\dot{B}^{\frac{N}{2}+1-\alpha}_{2,1})
  \cap L^1_{T}(\dot{B}^{\frac{N}{2}+1}_{2,1}).
\end{align}

Now, we pass to the limit in the approximate system \eqref{eq:app-sys}. We here only deal with the term
$\mathcal{J}_n \big(\Lambda^\alpha \big(u^nh(\sigma^n)\big)-u^n\Lambda^\alpha \big(h(\sigma^n)\big)\big)$,
since the remaining terms can be treated by the standard process.
Let $\phi\in C_c^\infty(\mathbb{R}^N)$ be any fixed function with $\supp \phi \subset B(0,R)$.
Recalling that $\varphi$ is the function introduced in \eqref{eq:chi-phi}, there exists a bump function
$\chi\in C^\infty_c(\mathbb{R}^N)$ supported in the ball $B(0,\frac{4}{3})$ such that
$\chi(x) + \sum_{j\geq 0} \varphi_j(x) = 1$ for every $x\in\mathbb{R}^N$ (see \cite{bahouri2011fourier}).
Clearly, $\chi\equiv 1$ in $B(0,\frac{3}{4})$, and we can choose $k\in \N$ large enough so that
$\chi_k(\cdot) = \chi(2^{-k}\cdot) \equiv 1 $ in $B(0,2R)$ and it also holds that
$\chi_k(x) + \sum_{j\geq k} \varphi_j(x) =1$ for every $x\in \R^N$.
We write
\begin{align}
  &\quad \big\langle \mathcal{J}_n \big(\Lambda^\alpha \big(u^nh(\sigma^n)\big)-u^n\Lambda^\alpha \big(h(\sigma^n)\big)\big)-\big(\Lambda^\alpha \big(uh(\sigma)\big)
  - u\Lambda^\alpha \big(h(\sigma)\big)\big),\phi \big\rangle \nonumber \\
  & = \big\langle\Lambda^\alpha \big(u^nh(\sigma^n)\big)
  - u^n\Lambda^\alpha \big(h(\sigma^n)\big),(\mathcal{J}_n-\Id) \phi \big\rangle \nonumber \\
  & \quad + \big\langle
  \big( \Lambda^\alpha
  (\chi_k u^n\, h(\sigma^n))- u^n \Lambda^\alpha (\chi_k h(\sigma^n))\big)
  - \big( \Lambda^\alpha
  ( \chi_k u\, h(\sigma))- u \Lambda^\alpha(\chi_k h(\sigma))\big),
  \phi \big\rangle \nonumber \\
  &\quad + \sum_{j=k}^\infty \big\langle  \big( \Lambda^\alpha
  (\varphi_j u^n\, h(\sigma^n))- u^n \Lambda^\alpha (\varphi_j h(\sigma^n))\big)
  - \big( \Lambda^\alpha
  ( \varphi_j u\, h(\sigma))- u \Lambda^\alpha(\varphi_j h(\sigma))\big),
  \phi \big\rangle \nonumber \\
  & =: I_1^n + I_2^n + I_3^n.
\end{align}
Similarly as \eqref{eq.local.G2}, we know that $\Lambda^\alpha (u^nh(\sigma^n))
 - u^n\Lambda^\alpha (h(\sigma^n)) $ has a uniformly-in-$n$ bound in $L^1_T(\dot B^{\frac{N}{2}+1-\alpha}_{2,1})$. Then by virtue of the dual property and the smoothness of $\phi$, we can show that \[\lim\limits_{n\rightarrow \infty} I_1^n =0.\]
Note that $h(\sigma)$ given by \eqref{eq:h} has the same spatial support with $\sigma$.
Hence $\chi_k h(\sigma^n) = \chi_k h(\chi_{k+2} \sigma^n)$, and we can write $I_2^n$ as
\begin{align*}
  I_2^n = \big\langle  [\Lambda^\alpha, \chi_{k+2} (u^n - u) ] \,\big(\chi_k h(\chi_{k+2}\sigma^n)  \big),  \phi \big\rangle
  + \big\langle  [\Lambda^\alpha,  \chi_{k+2} u ] \,\big(\chi_k (h(\chi_{k+2}\sigma^n )
  - h(\chi_{k+2}\sigma))\big), \phi\rangle.
\end{align*}
Since $\chi_{k+2}u^n\to\chi_{k+2}u$ in $L^1_T(B^{\frac{N}{2}+1-\varepsilon}_{2,1})$
and $\chi_{k+2}\sigma^n\to\chi_{k+2}\sigma$ in $L^\infty_T(B^{\frac{N}{2}-\varepsilon}_{2,1})$ with any $\varepsilon>0$ small,
and taking advantage of Corollary \ref{corollary.kato_ponce} (with $r=\frac{N}{2}+1$ and $r_1=\frac{N}{2}+\frac{\alpha}{2}$)
and Lemma \ref{lemma.composition},
we get
\begin{align*}
  |I_2^n| & \leq C \|\chi_{k+2} (u^n-u)\|_{L^1_T(\dot B^{1+\frac{\alpha}{2}}_{2,1})}
  \|h(\chi_{k+1}\sigma^n) \|_{L^\infty_T(\dot B^{\frac{N}{2}+\frac{\alpha}{2}-1}_{2,1})} \|\phi\|_{L^2} \\
  & \quad + C \|\chi_{k+2} u\|_{L^1_T(\dot B^{1+\frac{\alpha}{2}}_{2,1})}
  \|h(\chi_{k+2}\sigma^n) - h(\chi_{k+2} \sigma)\|_{L^\infty_T(\dot B^{\frac{N}{2}+\frac{\alpha}{2}-1}_{2,1})} \|\phi\|_{L^2} \\
  & \leq C \|\chi_{k+2} (u^n-u)\|_{L^1_T(\dot B^{1+\frac{\alpha}{2}}_{2,1})}  +
  C  \|\chi_{k+2} (\sigma^n - \sigma)\|_{L^\infty_T(\dot B^{\frac{N}{2}+\frac{\alpha}{2}-1}_{2,1})} \rightarrow 0,
  \quad \textrm{as}\;\; n\rightarrow \infty.
\end{align*}
Next we consider $I_3^n$. Noting that $\varphi_j h(\sigma^n) = \varphi_j h(\chi_{j+2}\sigma^n)$ for every $j\in \N$,
and by using the integral formula of $\Lambda^\alpha $ given in \eqref{def:Lam-alp} and the support property, 
we infer that
\begin{align*}
  & \quad \sum_{j=k}^\infty \big|\big\langle \Lambda^\alpha (\varphi_j u^n h(\sigma^n)) - \Lambda^\alpha (\varphi_j u h(\sigma)) , \phi \big\rangle \big| \nonumber \\
  & \leq \sum_{j=k}^\infty \big|\big\langle \Lambda^\alpha (\varphi_j (u^n - u) h(\chi_{j+2}\sigma^n)) , \phi \big\rangle \big|
  + \sum_{j=k}^\infty \big|\big\langle \Lambda^\alpha \big(\varphi_j u \big( h(\chi_{j+2}\sigma^n) - h(\chi_{j+2}\sigma)\big)\big) , \phi \big\rangle \big| \nonumber \\
  & \lesssim \sum_{j=k}^\infty \Big|\int_0^T\int_{|x|\lesssim 2^k} \int_{|y|\sim 2^j} \frac{ \varphi_j(y) (u^n - u)(y) h(\chi_{j+2}\sigma^n) (y)}{ |x - y|^{N+\alpha}} \phi(x) \dd y \dd x \dd t \Big| \nonumber \\
  & \quad +  \sum_{j=k}^\infty \Big|\int_0^T\int_{|x|\lesssim 2^k} \int_{|y|\sim 2^j} \frac{  \varphi_j(y) u(y) \big( h(\chi_{j+2}\sigma^n) - h(\chi_{j+2}\sigma)\big) (y)}{ |x - y|^{N+\alpha}} \phi(x) \dd y \dd x \dd t \Big| \nonumber \\
  & \lesssim \sum_{j=k}^\infty 2^{-j(\alpha + \frac{N}{2})} \|\varphi_j (u^n - u)\|_{L^1_T(L^2)} \|h(\chi_{j+2}\sigma^n)\|_{L^\infty_T(L^\infty)}  \nonumber \\
  & \quad + \sum_{j=k}^\infty 2^{-j(\alpha + \frac{N}{2})} \|h(\chi_{j+2} \sigma^n) - h(\chi_{j+2}\sigma) \|_{L^\infty_T(L^2)}  \|\varphi_j u\|_{L^1_T(L^\infty)} \nonumber \\
  & \leq C \sum_{j=k}^\infty 2^{-j (\alpha + \frac{N}{2})} \Big(\|\varphi_j (u^n - u)\|_{L^1_T(L^2)} + \|\chi_{j+2} (\sigma^n - \sigma)\|_{L^\infty_T(L^2)} \Big),
\end{align*}
where in the last line we have used the uniform (in $j$ and $n$) estimates
\begin{align*}
\|h(\chi_{j+2}\sigma^n)\|_{L^\infty_T(L^\infty)} \lesssim&\, \|\chi_{j+2} \sigma^n\|_{L^\infty_T(L^\infty)} \lesssim \|\sigma^n \|_{L^\infty_T(\dot B^{\frac{N}{2}}_{2,1})} \leq C,\\
\|\varphi_j u\|_{L^1_T(L^\infty)} \lesssim&\, T^{\frac{1}{\alpha}} \|u\|_{L^{\frac{\alpha}{\alpha-1}}_T(\dot B^{\frac{N}{2}}_{2,1})} \leq C.
\end{align*}
For any $\epsilon >0$, Lemma \ref{lem:Bes-K} and \eqref{eq:sig-unif-loc}, \eqref{eq:sig-u-bdd3} ensure that
\begin{align*}
  \|\varphi_j (u^n -u)\|_{L^1_T(L^2)} \leq&\, C 2^{j (\frac{1}{2} + \frac{1-\alpha}{N})} T\,\|u^n -u\|_{L^\infty_T(\dot B^{\frac{N}{2}+1-\alpha}_{2,1})}  \leq C 2^{j\frac{1}{2}}, \\
  \|\chi_{j+2} (\sigma^n -\sigma)\|_{L^\infty_T(L^2)} \leq&\, C 2^{j \frac{1}{2}} \|\sigma^n -\sigma\|_{L^\infty_T(\dot B^{\frac{N}{2}}_{2,1})}  \leq C 2^{j\frac{1}{2}},
\end{align*}
with $C>0$ independent of $j$ and $n$, then there exists a large number $J\in \N$ so that
\begin{align*}
  C \sum_{j\geq J}  2^{-j (\alpha + \frac{N}{2})} \Big(\|\varphi_j (u^n - u)\|_{L^1_T(L^2)} + \|\chi_{j+2} (\sigma^n - \sigma)\|_{L^\infty_T(L^2)} \Big)
  \leq C \sum_{j\geq J} 2^{-j (\alpha + \frac{N-1}{2})} \leq \frac{\epsilon}{4}.
\end{align*}
On the other hand, the above established convergence result guarantees that there exists $n_0=n_0(\alpha,N,J,\epsilon)\in \N$ so that for any $n\geq n_0$,
\begin{align*}
   C \sum_{k\leq j\leq J} 2^{-j (\alpha + \frac{N}{2})} \Big(\|\varphi_j (u^n - u)\|_{L^1_T(L^2)} + \|\chi_{j+2} (\sigma^n - \sigma)\|_{L^\infty_T(L^2)} \Big)\leq \frac{\epsilon}{4}.
\end{align*}
Thus for any $\epsilon>0$, there exists $n_0\in \N$ so that for any $n\geq n_0$,
\begin{align}\label{eq:I3-1}
  \sum_{j=k}^\infty \big|\big\langle \Lambda^\alpha (\varphi_j u^n h(\sigma^n)) - \Lambda^\alpha (\varphi_j u h(\sigma)) , \phi \big\rangle \big|\leq \frac{\epsilon}{2}.
\end{align}
Similarly, for the remaining term in $I_3^n$ we have
\begin{align*}
  & \quad \sum_{j=k}^\infty \big|\big\langle u^n \Lambda^\alpha (\varphi_j h(\sigma^n)) - u \Lambda^\alpha (\varphi_j h(\sigma)) , \phi \big\rangle \big| \\
  & \leq \sum_{j=k}^\infty \big|\big\langle \chi_k (u^n -u)\, \Lambda^\alpha (\varphi_j h(\sigma^n)) , \phi \big\rangle \big|
  + \sum_{j=k}^\infty \big|\big\langle \chi_k u \,\Lambda^\alpha \big(\varphi_j \big( h(\chi_{j+2}\sigma^n) - h(\chi_{j+2}\sigma)\big)\big) , \phi \big\rangle \big| \\
  & \lesssim \sum_{j=k}^\infty \Big|\int_0^T\int_{|x|\lesssim 2^k} \int_{|y|\sim 2^j} \frac{\varphi_j(y) h(\sigma^n) (y)}{ |x - y|^{N+\alpha}} \chi_k(x) (u^n -u )(x)\phi(x) \dd y \dd x \dd t \Big| \\
  & \quad +  \sum_{j=k}^\infty \Big|\int_0^T\int_{|x|\lesssim 2^k} \int_{|y|\sim 2^j}
  \frac{ \varphi_j(y) \big( h(\chi_{j+2}\sigma^n) - h(\chi_{j+2}\sigma)\big) (y)}{ |x - y|^{N+\alpha}} \chi_k(x) u(x) \phi(x) \dd y \dd x \dd t \Big| \\
  & \leq C \|\chi_k (u^n - u)\|_{L^1_T(L^2)} \sum_{j=k}^\infty 2^{-j\alpha} \|h(\sigma^n)\|_{L^\infty_T(L^\infty)}  \\
  & \quad + C \sum_{j=k}^\infty 2^{-j(\alpha + \frac{N}{2})} \|h(\chi_{j+2} \sigma^n) - h(\chi_{j+2}\sigma) \|_{L^\infty_T(L^2)}
  \|\chi_k u\|_{L^1_T(L^\infty)} .
\end{align*}
Arguing as above, we can deduce that there exists a constant $n_1 \in\N$ so that for any $n\geq n_1$,
\begin{align}\label{eq:I3-2}
  \sum_{j=k}^\infty \big|\big\langle u^n \Lambda^\alpha (\varphi_j h(\sigma^n)) - u \Lambda^\alpha (\varphi_j h(\sigma)) , \phi \big\rangle \big| \leq \frac{\epsilon}{2} .
\end{align}
Hence, combining \eqref{eq:I3-1} with \eqref{eq:I3-2}, for any $\epsilon>0$ we have that $|I_3^n| \leq \epsilon$ for every $n\geq \max\{n_0,n_1\}$, which implies
$\lim\limits_{n\rightarrow \infty} I_3^n = 0$. Therefore, gathering the above convergence result of $I_1^n$-$I^n_3$,
we conclude the convergence of the term $\mathcal{J}_n \big(\Lambda^\alpha \big(u^nh(\sigma^n)\big)-u^n\Lambda^\alpha \big(h(\sigma^n)\big)\big)$.

Finally, we show the time continuity property of $(\sigma, u)$.
Note that $\sigma\in \widetilde{L}^{\infty}_{T}(\widetilde{B}^{\frac{N}{2}+1 -\alpha,\frac{N}{2}})$ and
$u\in \widetilde{L}^\infty_T(\dot B^{\frac{N}{2}+1-\alpha}_{2,1})$,
and using the fact that $\partial_t \sigma \in L^{\frac{\alpha}{\alpha-1}}_T(\dot{B}^{\frac{N}{2}-1}_{2,1})$
and $\partial_t u \in L^{\frac{\alpha}{\alpha-1}}_T (\widetilde{B}^{\frac{N}{2}-1, \frac{N}{2} -\alpha})$ (similarly as obtaining \eqref{eq:par-t-sig}, \eqref{eq:par-t-u}),
we can apply a simple argument of high-low frequency decomposition (e.g. see \cite[Thm. 3.3.1]{danchin2005fourier}) to conclude that $\sigma\in C([0,T];\widetilde{B}^{\frac{N}{2}+1-\alpha,\frac{N}{2}})$ and $u\in C([0,T];\dot{B}^{\frac{N}{2}+1-\alpha}_{2,1})$.
\end{proof}

\subsection{Uniqueness}
We continue with a uniqueness argument for our constructed solution.

\begin{proof}[Proof of Theorem \ref{th.local.solution.nl}: uniqueness]
Assume that $(\sigma_1,u_1)$ and $(\sigma_2,u_2)$ satisfying
\begin{align}\label{assum:uni}
  u_i \in L^1_T(\dot B^{\frac{N}{2}+1}_{2,1}),
  \quad \|\sigma_i\|_{L^\infty_T(\widetilde{B}^{\frac{N}{2}+1-\alpha,\frac{N}{2}})} \leq C\eta,
  \quad \textrm{with $\eta>0$ small enough},
\end{align}
are two regular solutions of system \eqref{eq.EAsigma.nl} associated with the same initial data $(\sigma_0,u_0)$.
Denote $(\delta \sigma, \delta u) :=(\sigma_1-\sigma_2,u_1-u_2)$, then
\begin{equation*}
\begin{cases}
  \partial_t \delta \sigma + T_{u_1}\cdot\nabla \delta \sigma + \lambda\Div \delta u = \delta F,\\
  \partial_t \delta u + T_{u_1} \cdot \nabla \delta u + \Lambda^\alpha \delta u + \lambda\nabla \delta \sigma = \delta G, \\
  (\delta \sigma, \delta u)|_{t=0} = (0,0),
\end{cases}
\end{equation*}
where
\begin{equation*}
\begin{aligned}
  \delta F := -\delta u\cdot\nabla \sigma_2 - (\gamma-1)(\delta \sigma \Div u_1 + \sigma_2\Div \delta u) - T_{\nabla \delta\sigma}\cdot u_1 - R(u_1,\nabla \delta\sigma),
\end{aligned}
\end{equation*}
\begin{equation*}
  \delta G := -\delta u\cdot\nabla u_2 - \mu \big(\Lambda^\alpha (\delta u\,h(\sigma_1)) -
  \delta u\, \Lambda^\alpha (h(\sigma_1)) \big) - \mu \big(\Lambda^\alpha (u_2\delta h) - u_2\Lambda^\alpha (\delta h)\big)
  - T_{\nabla \delta u}\cdot u_1 - R(u_1, \nabla \delta u),
\end{equation*}
with $\delta h = h(\sigma_1)-h(\sigma_2)$.
Applying \eqref{eq:sig-u-es2} in Propositions \ref{prop.lin.pro} leads to that
\begin{equation*}
\begin{aligned}
  & \quad \Vert \delta \sigma\Vert_{\widetilde{L}_t^\infty (\widetilde{B}^{\frac{N}{2}-\alpha,\frac{N}{2}-1})}
  + \Vert \delta u\Vert_{\widetilde{L}_t^\infty (\dot{B}^{\frac{N}{2}-\alpha}_{2,1})}
  + \int_0^t \Big(\Vert \delta \sigma(\tau)\Vert_{\widetilde{B}^{\frac{N}{2},\frac{N}{2}+1-\alpha}}
  + \Vert \delta u(\tau)\Vert_{\dot{B}^{\frac{N}{2}}_{2,1}}\Big) \dd \tau\\
  & \leq C \int_0^t \Vert u_1 \Vert_{\dot{B}^{\frac{N}{2}+1}_{2,1}}
  \Big(\Vert \delta \sigma \Vert_{\widetilde{B}^{\frac{N}{2}-\alpha,\frac{N}{2}-1}}
  + \Vert \delta u\Vert_{\dot{B}^{\frac{N}{2}-\alpha}_{2,1}} \Big) \dd \tau + C \Vert \delta F \Vert_{L^1_t (\widetilde{B}^{\frac{N}{2}-\alpha,\frac{N}{2}-1})}
  +C \Vert \delta G\Vert_{L^1_t(\dot{B}^{\frac{N}{2}-\alpha}_{2,1})} .
\end{aligned}
\end{equation*}
Applying \eqref{eq:compest2} in Lemma \ref{lemma.composition}, we get
\begin{align*}
  \Vert \delta h\Vert_{\dot{B}^{\frac{N}{2}-1}_{2,1}} = \|h(\sigma_1) -h(\sigma_2)\|_{\dot{B}^{\frac{N}{2}-1}_{2,1}} \lesssim \big(1+ \|(\sigma_1,\sigma_2)\|_{L^\infty_t(\dot{B}^{\frac{N}{2}}_{2,1})}\big)^{[\frac{N}{2}+1]}
  \Vert \delta \sigma\Vert_{\dot{B}^{\frac{N}{2}}_{2,1}}
  \le C \Vert \delta \sigma\Vert_{\dot{B}^{\frac{N}{2}}_{2,1}}.
\end{align*}
Taking advantage of Lemma \ref{lemma.Bilinear}, Corollary \ref{corollary.kato_ponce} and the above inequality, we infer that
\begin{equation*}
  \Vert \delta F \Vert_{L^1_t(\widetilde{B}^{\frac{N}{2}-\alpha,\frac{N}{2}-1})}
  \leq C \int_0^t\Vert \delta \sigma \Vert_{\widetilde{B}^{\frac{N}{2}-\alpha,\frac{N}{2}-1}}
  \Vert u_1\Vert_{\dot{B}^{\frac{N}{2}+1}_{2,1}}\dd\tau
  + C \Vert \sigma_2\Vert_{L^\infty_t(\widetilde{B}^{\frac{N}{2}+1-\alpha,\frac{N}{2}})}
  \Vert \delta u\Vert_{L^1_t(\dot{B}^{\frac{N}{2}}_{2,1})},
\end{equation*}
\begin{equation*}
\begin{aligned}
  \Vert \delta G \Vert_{L^1_t(\dot{B}^{\frac{N}{2}-\alpha}_{2,1})} \le
  C\int_0^t \Big( \Vert (u_1,u_2)\Vert_{\dot{B}^{\frac{N}{2}+1}_{2,1}} \Vert \delta u\Vert_{\dot{B}^{\frac{N}{2}-\alpha}_{2,1}}
  + \Vert \delta u\Vert_{\dot{B}^{\frac{N}{2}}_{2,1}} \Vert h(\sigma_1)\Vert_{\dot{B}^{\frac{N}{2}}_{2,1}}
  + \Vert u_2\Vert_{\dot{B}^{\frac{N}{2}+1}_{2,1}} \Vert \delta h\Vert_{\dot{B}^{\frac{N}{2}-1}_{2,1}}\Big)\dd\tau & \\
  \le C\int_0^t \Big( \Vert (u_1,u_2)\Vert_{\dot{B}^{\frac{N}{2}+1}_{2,1}}\Vert \delta u\Vert_{\dot{B}^{\frac{N}{2}-\alpha}_{2,1}}
  + \Vert u_2\Vert_{\dot{B}^{\frac{N}{2}+1}_{2,1}} \Vert \delta \sigma \Vert_{\dot{B}^{\frac{N}{2}-1}_{2,1}}\Big) \dd\tau
  + C \Vert \delta u \Vert_{L^1_t(\dot{B}^{\frac{N}{2}}_{2,1})} \Vert \sigma_1\Vert_{L^\infty_t(\dot{B}^{\frac{N}{2}}_{2,1})}.&
\end{aligned}
\end{equation*}
By setting $\eta >0$ in \eqref{assum:uni} be small enough so that
$C\big(\Vert \sigma_1\Vert_{L^\infty_T(\dot{B}^{\frac{N}{2}}_{2,1})} + \Vert \sigma_2\Vert_{L^\infty_T(\widetilde{B}^{\frac{N}{2}+1-\alpha,\frac{N}{2}})} \big)\le \frac{1}{4 C}$,
we gather the above estimates to get
\begin{equation*}
\begin{aligned}
  &\quad \Vert \delta \sigma\Vert_{\widetilde{L}_t^{\infty}(\widetilde{B}^{\frac{N}{2}-\alpha,\frac{N}{2}-1})} +
  \Vert \delta u\Vert_{\widetilde{L}_t^{\infty}(\dot{B}^{\frac{N}{2}-\alpha}_{2,1})} \\
  &\le C\int_0^t \Vert (u_1,u_2)(\tau)\Vert_{\dot{B}^{\frac{N}{2}+1}_{2,1}}
  \Big(\Vert \delta \sigma(\tau)\Vert_{\widetilde{B}^{\frac{N}{2}-\alpha,\frac{N}{2}-1}}
  + \Vert \delta u(\tau)\Vert_{\dot{B}^{\frac{N}{2}-\alpha}_{2,1}} \Big)\dd \tau.
\end{aligned}
\end{equation*}
Gronwall's inequality guarantees that
$\Vert \delta \sigma \Vert_{\widetilde{L}^\infty_t(\widetilde{B}^{\frac{N}{2}-\alpha,\frac{N}{2}-1})}
+ \Vert \delta u\Vert_{\widetilde{L}^\infty_t(\dot{B}^{\frac{N}{2}-\alpha}_{2,1})}\equiv0$ for every $t\in [0,T]$.
In other words, $\sigma_1\equiv \sigma_2$ and $u_1\equiv u_2$ on $[0,T]\times\mathbb{R}^N$.
\end{proof}

\subsection{Global existence}
Now we are ready to prove Theorem \ref{th.global.solution.nl}. Under the smallness assumption on $X_0$ in \eqref{assum:sig-u0}, Proposition \ref{prop.nonlin.pro.nl} ensures the smallness of $X(T)$. Hence, we can extend the solution using the local existence result in Theorem \ref{th.local.solution.nl}, viewing $T$ as the initial time. Repeating the process, we obtain a global solution.

\begin{proof}[Proof of Theorem \ref{th.global.solution.nl}]
Take $\varepsilon'=\min\{\varepsilon_0, \eta/C_*\}$, where $(\varepsilon_0, C_*)$ are the constants in Proposition \ref{prop.nonlin.pro.nl} and $\eta$ is the constant in Theorem \ref{th.local.solution.nl}.
Let $T_*$ be the maximal existence time of the solution $(\sigma,u)$ to \eqref{eq.EAsigma.nl}, namely
\[T_*=\sup\{T\geq0 : \exists\text{ a solution }(\sigma,u) \text{ on } [0,T]\times\R^N\}.\]
We will show $T_*=\infty$ by contradiction.

Suppose $T_*$ is finite. A direct application of Proposition \ref{prop.nonlin.pro.nl} yields
\[X(T)\leq C_* X_0\leq \eta, \quad \forall~T<T_*.\]
The continuity of $X$ then implies $X(T_*)\leq\eta$. We then apply Theorem \ref{th.local.solution.nl} to the system \eqref{eq.EAsigma.nl} initiated at time $T$. There exists a time $T>0$ such that $(\sigma,u)$ exists in $[T_*, T_*+T]$. This contradicts the definition of $T_*$.
\end{proof}


\subsection{Propagation of smooth initial data}
In this subsection, we show that if the initial data is smoother (also known as subcritical), the solution will inherit the initial regularity.
\begin{proposition}\label{prop.a.4}
Under the assumption of Theorem \ref{th.global.solution.nl}, and additionally assuming
\begin{equation*}
  \Vert \sigma_0\Vert_{ \widetilde{B}^{s,s+\alpha-1}}
  +\Vert u_0\Vert_{\dot{B}^{s}_{2,1}}< +\infty,\quad\textrm{for}\;\; s>\frac{N}{2}+1-\alpha,
\end{equation*}
The solution $(\sigma,u)$ to \eqref{eq.EAsigma.nl} satisfies
\begin{equation}\label{eq.A.4.0}
\Vert \sigma\Vert_{\widetilde{L}_T^{\infty} (\widetilde{B}^{s,s+\alpha-1})} + \Vert u\Vert_{\widetilde{L}_T^\infty (\dot{B}^s_{2,1})}
  + \int_0^T \Vert \sigma(\tau)\Vert_{\widetilde{B}^{s+\alpha,s+1}} \dd \tau + \int_0^T \Vert u(\tau)\Vert_{\dot{B}^{s+\alpha}_{2,1}} \dd \tau <+\infty,
\end{equation}
for all $T>0$.
\end{proposition}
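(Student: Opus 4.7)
The approach is to apply the linear estimate of Proposition \ref{prop.lin.pro} to the paralinearized system at the higher regularity index $s$, with transport field $v=u$ and source terms $(F,G)$ given by \eqref{eq:FG}. Writing $X_s(T)$ for the left-hand side of \eqref{eq.A.4.0} and $V(T) := \int_0^T \|u(\tau)\|_{\dot B^{N/2+1}_{2,1}} \dd\tau$, Theorem \ref{th.global.solution.nl} already guarantees $V(T) \leq C\varepsilon' < \infty$ for every $T\geq 0$, so Proposition \ref{prop.lin.pro} delivers
\begin{equation*}
X_s(T) \leq C e^{C V(T)}\Big(\|\sigma_0\|_{\widetilde{B}^{s, s+\alpha-1}} + \|u_0\|_{\dot{B}^s_{2,1}} + \|F\|_{L^1_T(\widetilde{B}^{s, s+\alpha-1})} + \|G\|_{L^1_T(\dot{B}^s_{2,1})}\Big).
\end{equation*}
Everything is then reduced to bounding the nonlinear source terms in a way that is at most linear in $X_s(T)$, with coefficients either small or integrable in time along the trajectory.

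For the bilinear terms in $F$ and $G$ (such as $\sigma\Div u$, $T_{\nabla\sigma}\cdot u$, $R(u,\nabla\sigma)$, $T_{\nabla u}\cdot u$ and $R(u,\nabla u)$), the plan is to invoke the product and remainder estimates in Lemma \ref{lemma.Bilinear}, especially \eqref{eq:prod-es4}--\eqref{eq:prod-es5}, which split each product into a piece with one factor in the critical norm (already bounded by $C\varepsilon'$ thanks to \eqref{eq:sig-u-ubdd}) times the other in the higher-regularity norm. A typical outcome is
\begin{equation*}
\|\sigma \Div u\|_{L^1_T(\widetilde{B}^{s, s+\alpha-1})} \lesssim \int_0^T \|u\|_{\dot B^{N/2+1}_{2,1}} \|\sigma\|_{\widetilde{B}^{s, s+\alpha-1}} \dd\tau + \|\sigma\|_{\widetilde L^\infty_T(\widetilde{B}^{N/2+1-\alpha, N/2})} \|u\|_{L^1_T(\dot{B}^{s+\alpha}_{2,1})},
\end{equation*}
and similarly for the other bilinear terms. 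The first piece is absorbed via Gronwall's lemma against $V$, while the second contributes a small factor times $X_s(T)$, which can be absorbed into the left-hand side.

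The main obstacle is the alignment commutator $\mathcal{C}(u, h(\sigma)) := \Lambda^\alpha(u\,h(\sigma)) - u\,\Lambda^\alpha h(\sigma)$. For this I would apply Lemma \ref{cor.K-P.Besov} with the choice $s_1 = s_2 = 0$, which gives
\begin{equation*}
\|\mathcal{C}(u, h(\sigma))\|_{\dot B^s_{2,1}} \lesssim \|u\|_{\dot B^{N/2+1}_{2,1}} \|h(\sigma)\|_{\dot B^{s+\alpha-1}_{2,1}} + \|u\|_{\dot B^{s+\alpha}_{2,1}} \|h(\sigma)\|_{\dot B^{N/2}_{2,1}},
\end{equation*}
and then control both Besov norms of $h(\sigma)$ through the composition inequality \eqref{eq:comp-Bes1} (which is applicable since $\sigma\in \dot B^{N/2}_{2,1}$ is bounded and $s+\alpha-1 > -N/2$), yielding $\|h(\sigma)\|_{\dot B^{s+\alpha-1}_{2,1}} \lesssim \|\sigma\|_{\widetilde{B}^{s, s+\alpha-1}}$ and $\|h(\sigma)\|_{\dot B^{N/2}_{2,1}} \lesssim \|\sigma\|_{\dot B^{N/2}_{2,1}} \leq C\varepsilon'$. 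The delicate point is precisely the choice $s_1 = s_2 = 0$: it ensures that the $\|u\|_{\dot B^{s+\alpha}_{2,1}}$ factor (which is only controlled by $X_s(T)$) is multiplied by the \emph{small} critical norm of $\sigma$, so that the resulting contribution is absorbable; any other choice would either create a circular dependency on $X_s$ with a non-small coefficient, or demand a regularity of $\sigma$ we do not have.

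Collecting these estimates produces an inequality of the form
\begin{equation*}
X_s(T) \leq C e^{CV(T)} X_s(0) + C\varepsilon' X_s(T) + C \int_0^T \|u(\tau)\|_{\dot B^{N/2+1}_{2,1}} X_s(\tau) \dd\tau.
\end{equation*}
Taking $\varepsilon'$ slightly smaller (if necessary) to absorb the term $C\varepsilon' X_s(T)$ into the left-hand side and then applying Gronwall's lemma to the remaining time integral yields $X_s(T) \leq C X_s(0)\, e^{C V(T)} < \infty$ for every $T>0$, establishing \eqref{eq.A.4.0}. (Strictly speaking, to make this a priori estimate rigorous one runs it on the approximating sequence \eqref{eq:app-sys} and passes to the limit via Fatou's property in Besov spaces, as in the existence proof of Theorem \ref{th.local.solution.nl}.)
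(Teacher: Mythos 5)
Your proposal is correct and follows essentially the same route as the paper: apply Proposition \ref{prop.lin.pro} at the higher index $s$ with $v=u$, bound the bilinear terms via Lemma \ref{lemma.Bilinear} and the alignment commutator via Lemma \ref{cor.K-P.Besov} with $s_1=s_2=0$ together with the composition estimate \eqref{eq:comp-Bes1}, so that every occurrence of the higher-order norm is paired with a factor of size $C\varepsilon'$. The only (cosmetic) difference is that the paper absorbs even the $\|u\|_{L^1_T(\dot B^{N/2+1}_{2,1})}$-weighted terms directly using their smallness from \eqref{eq:sig-u-ubdd}, whereas you handle them by Gronwall; both are valid.
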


\begin{proof}[Proof of Proposition \ref{prop.a.4}]
Let $F$ and $G$ be defined as in \eqref{eq:FG}. The system \eqref{eq.EAsigma.nl} can be seen as in the form of \eqref{eq.lineareqbn0} with $v$ replaced by $u$. Then according to Proposition \ref{prop.lin.pro}, we have
\begin{equation*}
\begin{aligned}
  &\quad \Vert \sigma\Vert_{\widetilde{L}_T^{\infty} (\widetilde{B}^{s,s+\alpha-1})} + \Vert u\Vert_{\widetilde{L}_T^\infty (\dot{B}^s_{2,1})}
  + \int_0^T \Vert \sigma(\tau)\Vert_{\widetilde{B}^{s+\alpha,s+1}} \dd \tau + \int_0^T \Vert u(\tau)\Vert_{\dot{B}^{s+\alpha}_{2,1}} \dd \tau\\
  &\leq C e^{C U(T)} \Big(\Vert \sigma_0\Vert_{ \widetilde{B}^{s,s+\alpha-1}}
  +\Vert u_0\Vert_{\dot{B}^{s}_{2,1}}+ \Vert F\Vert_{L^1_T(\widetilde{B}^{s,s+\alpha-1})}
  + \Vert G\Vert_{ L^1_T(\dot{B}^{s}_{2,1})} \Big),
\end{aligned}
\end{equation*}
with
$U(T):=\int_0^T \Vert u(\tau)\Vert_{\dot{B}^{N/2+1}_{2,1}}\dd \tau$.
Owing to Lemmas \ref{lemma.Bilinear} and \ref{cor.K-P.Besov}, and using \eqref{eq:comp-Bes1} and \eqref{eq:sig-u-ubdd}, we infer that
\begin{equation*}
\begin{aligned}
  \Vert F\Vert_{L^{1}_T(\widetilde{B}^{s,s+\alpha-1})}
  &\lesssim \Vert \sigma\Vert_{L^{\infty}_{T}(\widetilde{B}^{s,s+\alpha-1})}\Vert u\Vert_{L^{1}_{T}(\dot{B}^{\frac{N}{2}+1}_{2,1})}+\Vert \sigma\Vert_{L^{\infty}_{T}(\widetilde{B}^{\frac{N}{2}+1-\alpha,\frac{N}{2}})}\Vert u\Vert_{L^{1}_{T}(\dot{B}^{s+\alpha}_{2,1})}\\
  &\leq C \varepsilon'\big(\Vert \sigma\Vert_{L^{\infty}_{T}(\widetilde{B}^{s,s+\alpha-1})}+\Vert u\Vert_{L^{1}_{T}(\dot{B}^{s+\alpha}_{2,1})}\big),
\end{aligned}
\end{equation*}
and
\begin{equation*}
\begin{aligned}
  \Vert G\Vert_{L^1_T(\dot{B}^{s}_{2,1})}
  &\lesssim \Vert u\Vert_{L^1_T(\dot{B}^{\frac{N}{2}+1}_{2,1})} \Vert u \Vert_{L^\infty_T(\dot{B}^{s}_{2,1})}+\Vert u\Vert_{L^1_T(\dot{B}^{s+\alpha}_{2,1})} \Vert \sigma \Vert_{L^\infty_T(\dot{B}^{\frac{N}{2}}_{2,1})}+\Vert u\Vert_{L^1_T(\dot{B}^{\frac{N}{2}+1}_{2,1})} \Vert \sigma \Vert_{L^\infty_T(\dot{B}^{s+\alpha-1}_{2,1})}\\
  &\leq C \varepsilon'\big(\Vert u \Vert_{L^\infty_T(\dot{B}^{s}_{2,1})}+\Vert u\Vert_{L^1_T(\dot{B}^{s+\alpha}_{2,1})}
  + \Vert \sigma \Vert_{L^\infty_T(\widetilde{B}^{s,s+\alpha-1})} \big).
\end{aligned}
\end{equation*}
Since $\varepsilon'$ is small enough, we obtain the desired estimate \eqref{eq.A.4.0}.
\end{proof}

\section{Asymptotic behavior: proof of Theorem \ref{thm:decay}}\label{decay}

This section aims at proving Theorem \ref{thm:decay}. 
We shall mainly prove the following asymptotic behavior of the global solution $(\sigma,u)$ for system \eqref{eq.EAsigma.nl}.
\begin{proposition}\label{th.decay.intro}
Let $1<\alpha<2$. Assume that $(\sigma,u)$ is a global solution of system \eqref{eq.EAsigma.nl} such that
\begin{equation}\label{eq:sig-u-bdd2}
\begin{split}
  \sigma\in \widetilde{L}^{\infty}(\mathbb{R}^{+};\widetilde{B}^{\frac{N}{2}+1-\alpha,\frac{N}{2}}),\quad 
  u\in \widetilde{L}^{\infty}(\mathbb{R}^{+};\dot{B}^{\frac{N}{2}+1-\alpha}_{2,1})\cap L^{1}(\mathbb{R}^{+};{\dot{B}^{\frac{N}{2}+1}_{2,1}}).
\end{split}
\end{equation}
Then we have for every $0< s< 1- \frac{1}{\alpha}$,
\begin{equation}\label{eq.Decay.X.t+1}
  \Vert (\sigma,u)^\ell(t)\Vert_{\dot{B}^{\frac{N}{2}+1-\alpha+s\alpha}_{2,1}}+\Vert \sigma^h(t)\Vert_{ \dot{B}^{\frac{N}{2}}_{2,1}}
  +\Vert u^h(t)\Vert_{\dot{B}^{\frac{N}{2}+1-\alpha}_{2,1}} \le C(1+t)^{-s}.
\end{equation}
where $C$ depends on the norms of $(\sigma,u)$ in \eqref{eq:sig-u-bdd2}.
Besides, we have
\begin{equation}\label{eq.asymptotic}
  \lim_{t\to \infty} \Big(\Vert \sigma(t)\Vert_{\widetilde{B}^{\frac{N}{2}+1-\alpha,\frac{N}{2}}}+\Vert u(t)\Vert_{\dot{B}^{\frac{N}{2}+1-\alpha}_{2,1}}\Big) = 0.
\end{equation}
If we assume, in addition, $(\sigma_0,u_0)^\ell\in \dot{B}^{-s_0}_{2,\infty}(\R^N)$ with $s_0\in (\alpha-\frac{N}{2}-1,\frac{N}{2})$,
then for all $s_1\in [-s_0, \frac{N}{2}+1-\alpha]$,
\begin{equation}\label{eq.decay}
  \Vert \sigma(t)\Vert_{\widetilde{B}^{s_1,\frac{N}{2}}}+\Vert u(t)\Vert_{\widetilde{B}^{s_1,\frac{N}{2}+1-\alpha}} \le C(1+t)^{- \frac{s_1 + s_0}{\alpha}}.
\end{equation}
\end{proposition}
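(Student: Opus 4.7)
My plan is to adapt the low--high frequency approach of Danchin--Xu \cite{danchin2017optimal} and Xin--Xu \cite{xin2021optimal} for the barotropic compressible Navier-Stokes system to the fractional dissipation setting, exploiting the spectral structure established in Proposition \ref{prop.lin.pro}. Viewing \eqref{eq.EAsigma.nl} as the paralinearized system with source $(F,G)$ from \eqref{eq:FG}, a Duhamel representation combined with the composite-energy ODIs \eqref{eq.Yj.pro.low}--\eqref{eq.Yj.pro.high} shows that for $j\le j_0$ the low-frequency block energy relaxes at the fractional-heat rate $e^{-c 2^{j\alpha} t}$ (so the linearized semigroup gains $s\alpha$ derivatives at the cost of a factor $(1+t)^{-s}$), while for $j>j_0$ the combined damping $\min\{2^{j\alpha},2^{j(2-\alpha)}\}\ge c>0$ gives pure exponential relaxation.

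For \eqref{eq.Decay.X.t+1}, with $0<s<1-\frac{1}{\alpha}$ fixed, I would introduce the time-weighted quantity
\[
\mathcal X(t):=\sup_{0\le\tau\le t}(1+\tau)^s\Bigl[\|(\sigma,u)^\ell(\tau)\|_{\dot B^{\frac{N}{2}+1-\alpha+s\alpha}_{2,1}}+\|\sigma^h(\tau)\|_{\dot B^{\frac{N}{2}}_{2,1}}+\|u^h(\tau)\|_{\dot B^{\frac{N}{2}+1-\alpha}_{2,1}}\Bigr],
\]
split the Duhamel integral into $[0,t/2]$ and $[t/2,t]$, and bound the nonlinear source using Lemma \ref{lemma.Bilinear} together with Corollary \ref{corollary.kato_ponce} applied to the alignment commutator $\Lambda^\alpha(uh(\sigma))-u\Lambda^\alpha h(\sigma)$. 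The key is to derive a Gronwall-type inequality of the form $\mathcal X(t)\le C_0+C\int_0^t\|u(\tau)\|_{\dot B^{\frac{N}{2}+1}_{2,1}}\mathcal X(\tau)\dd\tau$ plus absorbable terms; since $u\in L^1(\R^+;\dot B^{\frac{N}{2}+1}_{2,1})$ by the hypothesis \eqref{eq:sig-u-bdd2}, Gronwall's inequality then produces a uniform bound on $\mathcal X$ \emph{without} requiring any smallness of the solution.

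The critical-regularity vanishing \eqref{eq.asymptotic} would follow by a density argument combined with \eqref{eq.decay}: for initial data already lying in $\dot B^{-s_0}_{2,\infty}$ with admissible $s_0>0$ the solution's critical norm decays polynomially by \eqref{eq.decay}, and general data in $\widetilde B^{\frac{N}{2}+1-\alpha,\frac{N}{2}}\times\dot B^{\frac{N}{2}+1-\alpha}_{2,1}$ can be approximated by such smooth data, with the stability estimate implicit in the uniqueness argument of Section \ref{global_solution} propagating the vanishing to the general case.

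For the optimal rate \eqref{eq.decay}, I would first propagate the negative-index bound $(\sigma,u)^\ell\in L^\infty(\R^+;\dot B^{-s_0}_{2,\infty})$ via Duhamel, using the refined weighted paraproduct and remainder estimates of Lemma \ref{lemma.bony.low.weight} -- designed exactly for low-frequency negative-index bounds -- to control the nonlinear source. The restriction $s_0<\frac{N}{2}$ is forced by the paraproduct inequality, since $T_f g$ with $f\in\dot B^{-N/2}_{2,\infty}$ cannot be handled without additional structure. Once this uniform bound is available, interpolation between $L^\infty_t\dot B^{-s_0}_{2,\infty}$ and the decaying higher-regularity norms from \eqref{eq.Decay.X.t+1} gives $\|(\sigma,u)^\ell(t)\|_{\dot B^{s_1}_{2,1}}\lesssim(1+t)^{-(s_1+s_0)/\alpha}$ for $s_1\in[-s_0,\frac{N}{2}+1-\alpha]$, while Bernstein's inequality together with the high-frequency exponential decay yields the matching bound at the critical high-frequency level. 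The main obstacle I anticipate is the handling of the nonlocal alignment commutator in the time-weighted bootstrap: it couples low-frequency decay of $u$ with high-frequency mass of $h(\sigma)$, and careful use of Corollary \ref{corollary.kato_ponce} together with the hybrid norms is needed to keep the time integral convergent; the sharp threshold $s<1-\frac{1}{\alpha}$ should emerge from this nonlinear balance.
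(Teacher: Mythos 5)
Your plan for \eqref{eq.Decay.X.t+1} is essentially the paper's: time-weighted hybrid quantities $Z_{s,\bar s}(t)=t^{s}\|\cdot\|$, a Duhamel representation whose kernel is controlled by splitting the time integral at $t/2$ (exactly how the bound $\int_0^t e^{-c2^{j\alpha}(t-\tau)}t^s\tau^{-s}\,\dd\tau\le C2^{-j\alpha}$, valid only for $s<1$, is obtained), the weighted paraproduct estimates of Lemma \ref{lemma.bony.low.weight} for the source, and a Gronwall inequality driven by $\int_0^t\|u\|_{\dot B^{N/2+1}_{2,1}}\dd\tau<\infty$, with no smallness. That part is sound. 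The two remaining items, however, each contain a genuine gap.

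For \eqref{eq.decay}, ``propagate $\dot B^{-s_0}_{2,\infty}$ and interpolate with \eqref{eq.Decay.X.t+1}'' does not produce the optimal rate. At regularity level $\beta$ the estimate \eqref{eq.Decay.X.t+1} gives decay $(1+t)^{-(\beta-\frac{N}{2}-1+\alpha)/\alpha}$, so interpolating with exponent $\theta$ against the uniform $\dot B^{-s_0}_{2,\infty}$ bound yields, at level $s_1=-(1-\theta)s_0+\theta\beta$, the rate $(1+t)^{-\theta(\beta-\frac{N}{2}-1+\alpha)/\alpha}$, whereas the target rate is $(1+t)^{-(s_1+s_0)/\alpha}=(1+t)^{-\theta(\beta+s_0)/\alpha}$. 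These coincide only at the excluded endpoint $s_0=\alpha-\frac{N}{2}-1$, and for all admissible $s_0$ the interpolated rate is strictly worse. One must either rerun the whole time-weighted scheme with base regularity $\bar s=-s_0$ — which is what the paper does, and which forces weights $t^{s}$ with $s\ge 1$, hence the extra iteration $Z_{s,\bar s}\le C+CZ_{s-1,\bar s}$ of Lemma \ref{th.low-fre-est-n} because the kernel bound above fails for $s\ge1$ — or insert the Danchin--Xu Riccati mechanism $\|f\|_{\dot B^{\sigma+\alpha}_{2,1}}\gtrsim\|f\|_{\dot B^{\sigma}_{2,1}}^{1+1/\eta}\|f\|_{\dot B^{-s_0}_{2,\infty}}^{-1/\eta}$ directly into the differential inequality. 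Plain interpolation of the two end results is not enough.

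For \eqref{eq.asymptotic}, the density-plus-stability argument does not close. The stability estimate available from the uniqueness proof lives in $\widetilde{B}^{\frac{N}{2}-\alpha,\frac{N}{2}-1}\times\dot B^{\frac{N}{2}-\alpha}_{2,1}$, one derivative below the critical norm, so it cannot transport smallness of $\|\sigma(t)\|_{\widetilde{B}^{\frac{N}{2}+1-\alpha,\frac{N}{2}}}+\|u(t)\|_{\dot B^{\frac{N}{2}+1-\alpha}_{2,1}}$ from the approximants to the actual solution; moreover, since no smallness of the data is assumed in this proposition, the approximating data need not generate global solutions satisfying \eqref{eq:sig-u-bdd2} at all. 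The paper's argument is direct and avoids both issues: the Chemin--Lerner hypothesis $\sigma\in\widetilde{L}^\infty(\mathbb{R}^+;\widetilde{B}^{\frac{N}{2}+1-\alpha,\frac{N}{2}})$ means $\sum_j2^{j(\frac{N}{2}+1-\alpha)}\|\dot\Delta_j(\sigma,u)\|_{L^\infty_tL^2}<\infty$, so the tail $j\le-N_0$ is uniformly small in $t$, while every remaining dyadic block decays polynomially by \eqref{eq.Decay.X.t+1}.
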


With Proposition \ref{th.decay.intro} at our disposal, we can finish the proof of Theorem \ref{thm:decay}. Indeed, under condition \eqref{eq:rho>0}, the Euler-alignment system \eqref{eq.EA} is equivalent to \eqref{eq.EAsigma.nl}. Recall $\rho-1=h(\sigma)$. Since both $h$ and $h^{-1}$ are smooth, Proposition \ref{lemma.composition} implies that the regularity on $\rho$ and $\sigma$ are equivalent. Hence, the uniform bound \eqref{eq:rho-u-bdd} implies \eqref{eq:sig-u-bdd2}, and the estimates \eqref{eq.Decay.X.t+1}--\eqref{eq.decay} lead to \eqref{eq:rho-u-decay1}--\eqref{eq:rho-u-decay3}.

Our task remains to prove Proposition \ref{th.decay.intro}.
Let us denote
\begin{align}\label{eq:Xj}
  X_j(t) : = Y_j(t) + \Vert \dot\Delta_j\mathbb{P}u(t)\Vert_{L^2} \approx
  \begin{cases}
    \Vert\dot{\Delta}_j \sigma(t)\Vert_{L^2} + \Vert \dot{\Delta}_j u(t)\Vert_{L^2} ,& \quad \textrm{for}\;\; j\leq j_0, \\
    \Vert \Lambda^{\alpha-1} \dot{\Delta}_j \sigma(t)\Vert_{L^2} + \Vert \dot{\Delta}_j u(t)\Vert_{L^2}, & \quad \textrm{for}\;\; j> j_0.
  \end{cases}
\end{align}
Recalling the estimates \eqref{eq.Yj.pro.low}, \eqref{eq.Yj.pro.high} and \eqref{eq.Puj.pro}, we have
\begin{equation}\label{eq.Decay.xj}
\left\{
\begin{aligned}
  &\frac{d}{dt}X_j+ \bar{\mu} 2^{j\alpha}X_j\le C \Big(\Vert f_j\Vert_{L^2}+\Vert g_j\Vert_{L^2} + \Vert \widetilde{g}_j\Vert_{L^2}+
  \Vert \nabla \dot S_{j-1}u\Vert_{L^{\infty}}X_j\Big), & \; \textrm{for}\;j\le j_0,\\
  &\frac{d}{dt}X_j+\mu_h X_j\le C \Big(2^{j(\alpha-1)}\Vert f_j \Vert_{L^2} + \Vert g_j\Vert_{L^2} + \Vert \widetilde{g}_j\Vert_{L^2}
  + \Vert \nabla \dot S_{j-1}u\Vert_{L^\infty} X_j \Big),& \; \textrm{for}\;j>j_0,
\end{aligned}
\right.
\end{equation}
where $\bar{\mu} := \frac{\delta\mu}{8}$, $\mu_h :=\min(\bar{\nu}2^{j_0(2-{\alpha})},\mu2^{j_0 \alpha })$ and $f_j$, $g_j$, $\widetilde{g}_j$ are defined by \eqref{eq:fj}-\eqref{eq:tild-gj}
with $v=u$, and
\begin{equation}\label{eq:FGd}
\begin{aligned}
  &F=-(\gamma-1)\sigma\Div u-T_{\nabla\sigma}\cdot u-R(u,{\nabla\sigma}),\\
  &G= -\mu\Lambda^\alpha (uh(\sigma)) + \mu\, u\Lambda^\alpha h(\sigma)-T_{\nabla {u}}\cdot u-R(u,\nabla {u}).
\end{aligned}
\end{equation}
Note that $X_j$ for low-frequency part $j\le j_0$ has a dissipation effect analogous to the fractional heat operator,
while for high-frequency part $j>j_0$ it has a damping effect.
Thus one may expect that $(\sigma,u)$ altogether will have a polynomial decay by developing the dissipation/damping effect.
In the sequel we will treat the low-frequency part and high-frequency part separately to show the desired decay estimates.

Before proceeding forward, we introduce the following notations: for $-s_0 \leq  \bar{s}\leq \frac{N}{2}+1-\alpha $ and $s\geq 0$,
\begin{align}\label{eq:Zl}
  Z_{s,\bar{s}}^\ell(t) := t^s \Vert (\sigma,u)^\ell(t)\Vert_{\dot{B}^{\bar{s}+s\alpha}_{2,1}}
  = t^s \sum_{j\leq j_0} 2^{j(\bar{s}+s\alpha)} \|(\dot\Delta_j \sigma, \dot \Delta_j u)(t)\|_{L^2}
  = t^s \sum_{j\leq j_0} 2^{j(\bar{s}+ s\alpha)} X_j(t),
\end{align}
\begin{align}\label{eq:Zh}
  Z_s^h(t) := t^s\Big(\Vert \sigma^h(t)\Vert_{ \dot{B}^{\frac{N}{2}}_{2,1}} + \Vert u^h(t)\Vert_{\dot{B}^{\frac{N}{2}+1-\alpha}_{2,1}}\Big)
  = t^s \sum_{j > j_0} \Big(2^{j\frac{N}{2}} \|\dot \Delta_j \sigma\|_{L^2} + 2^{j(\frac{N}{2}+1-\alpha )} \|\dot \Delta_j u\|_{L^2} \Big),
\end{align}
and
\begin{align}\label{eq:Zt}
  Z_{s,\bar{s}}(t) := Z_{s,\bar{s}}^\ell(t) + Z_s^h(t).
\end{align}

\begin{remark}\label{th.Z.lh}
Notice that for $\bar{s}+s\alpha\leq \frac{N}{2}$,
\begin{equation}\label{eq.Z.sig.l+h}
  t^s \sum_{j\in\mathbb{Z}} 2^{j(\bar{s}+s\alpha)} \Vert \dot\Delta_j \sigma \Vert_{L^2}
  +  t^s \sum_{j\in\mathbb{Z}} 2^{j\frac{N}{2}} \Vert \dot \Delta_j\sigma \Vert_{L^2}  \le C Z_{s,\bar{s}}(t),
\end{equation}
and for $\bar{s}+s\alpha\leq \frac{N}{2} + 1 - \alpha$,
\begin{equation}\label{eq.Z.u.l+h}
  t^s \sum_{j\in \mathbb{Z}}2^{j(\bar{s} + s \alpha)} \Vert \dot \Delta_j u\Vert_{L^2}
  + t^s \sum_{j\in\mathbb{Z}}2^{j(\frac{N}{2}+1-\alpha)} \Vert \dot \Delta_j u\Vert_{L^2}\le C Z_{s,\bar{s}}(t).
\end{equation}
\end{remark}

\subsection{Low-frequency estimates}
From \eqref{eq.Decay.xj}, it is reasonable to expect that $(\sigma^\ell,u^\ell)$
present the polynomial decay estimate in suitable functional spaces.
\begin{lemma}\label{th.low-fre-est}
Under the assumption of Proposition \ref{th.decay.intro}, if $-s_0\leq \bar{s} \leq \frac{N}{2}+1-\alpha $ and $s\in[0,1)$ satisfy
$\bar{s}+s\alpha < \frac{N}{2}$,
then we have
\begin{equation}\label{eq.Decay.X.l.result}
\begin{aligned}
  Z_{s,\bar{s}}^\ell(t)\le C \Vert (\sigma_0,u_0) \Vert^\ell_{ \dot{B}^{\bar{s}}_{2,1}} + C\int_0^t \Phi(t,\tau) Z_{s,\bar{s}}(\tau) \dd \tau,
\end{aligned}
\end{equation}
where
\begin{equation}\label{eq.Phi.decay}
\begin{aligned}
  \Phi(t,\tau)& :=
  \sum_{j\le j_0} \sum_{j'\le j+4}2^{j'(\frac{N}{2} +1)} \Vert \dot\Delta_{j'}u(\tau) \Vert_{L^2}\psi_j(t,\tau)
  \\&\quad
  +\sum_{j\le j_0}\sum_{j'>j-4}2^{(j-j')(\frac{N}{2}+\bar{s} + \delta s \alpha)}2^{j'(\frac{N}{2}+1)} \Vert \dot\Delta_{j'}u(\tau) \Vert_{L^2}\psi_j(t,\tau) ,
\end{aligned}
\end{equation}
with
\begin{align}\label{eq:psi-j}
  \psi_j (t,\tau) := e^{-\bar{\mu} 2^{j\alpha}(t-\tau)}t^s\tau^{-s}
\end{align}
and
\begin{equation}\label{def:del}
\delta:=
\begin{cases}
  1, & \quad \textrm{if}\;\; \bar{s}+ s\alpha \leq \frac{N}{2} +1 -\alpha, \\
  \frac{N/2 -\bar{s} -\alpha +1}{s\alpha}, & \quad \textrm{if}\;\; \bar{s}  + s\alpha > \frac{N}{2} + 1-\alpha.
\end{cases}
\end{equation}
\end{lemma}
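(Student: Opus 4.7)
The plan is to apply Duhamel's formula to the dyadic inequality \eqref{eq.Decay.xj} in the low-frequency regime $j\leq j_0$, carry through the time weight $t^s$, sum in $j$ with weight $2^{j(\bar{s}+s\alpha)}$, and reduce every nonlinear source to a product in which the factor $2^{j'(\frac{N}{2}+1)}\Vert\dot\Delta_{j'}u(\tau)\Vert_{L^2}$ is pulled out (assembling into $\Phi$) while the remaining factor is bounded, via Remark \ref{th.Z.lh} and the hypothesis $\bar{s}+s\alpha<\frac{N}{2}$, by $\tau^{-s}Z_{s,\bar{s}}(\tau)$. Integrating \eqref{eq.Decay.xj} yields, for $j\leq j_0$,
\begin{equation*}
X_j(t)\leq e^{-\bar\mu 2^{j\alpha}t}X_j(0)+C\int_0^t e^{-\bar\mu 2^{j\alpha}(t-\tau)}\bigl(\Vert f_j\Vert_{L^2}+\Vert g_j\Vert_{L^2}+\Vert\widetilde g_j\Vert_{L^2}+\Vert\nabla\dot S_{j-1}u\Vert_{L^\infty}X_j\bigr)(\tau)\,\dd\tau.
\end{equation*}
Multiplying by $t^s 2^{j(\bar{s}+s\alpha)}$, the time factors combine into $\psi_j(t,\tau)$ of \eqref{eq:psi-j}, and summation over $j\leq j_0$ gives $Z^\ell_{s,\bar{s}}(t)$ on the left. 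The homogeneous term is controlled by the elementary bound $\sup_{y\geq 0}y^s e^{-y}<\infty$, which yields $t^s 2^{j s\alpha}e^{-\bar\mu 2^{j\alpha}t}\leq C$ and hence a contribution bounded by $C\Vert(\sigma_0,u_0)\Vert^\ell_{\dot B^{\bar{s}}_{2,1}}$.

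For the nonlinear sources, each piece of $F,G$ from \eqref{eq:FGd} is expanded via Bony's decomposition \eqref{eq:Bony-dec}; the commutator structures $\dot S_{j-1}u\cdot\nabla\dot\Delta_j(\cdot)-\dot\Delta_j(T_u\cdot\nabla(\cdot))$ present in \eqref{eq:fj}--\eqref{eq:tild-gj} are already of paraproduct-remainder type. The plan is to arrange every term so that $u$ (or $\nabla u$) plays the role of the derivative-losing factor measured in $\dot B^{\frac{N}{2}+1}_{2,1}$, while the accompanying factor is a $\sigma$, $h(\sigma)$, or $u$-norm absorbed into $\tau^{-s}Z_{s,\bar{s}}(\tau)$. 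Paraproducts $T_u\nabla(\cdot)$ and $T_{\nabla u}(\cdot)$ are handled by \eqref{eq.weight.2} of Lemma \ref{lemma.bony.low.weight} with $r=\bar{s}+s\alpha$ and an appropriately chosen $r_2$; this produces the first sum in \eqref{eq.Phi.decay}, the supremum factor being controlled through the embedding of $Z_{s,\bar{s}}$ into $\dot B^{\frac{N}{2}}_{2,1}$-type norms (Remark \ref{th.Z.lh}). Remainders $R(u,\nabla(\cdot))$ are treated by \eqref{eq:LP-rem}, with $\beta$ chosen so that $r+\frac{N}{2}-\beta=\frac{N}{2}+\bar{s}+\delta s\alpha$, i.e.\ $\beta=s\alpha(1-\delta)$, matching the exponent in the second sum of \eqref{eq.Phi.decay}. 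The damping term $\Vert\nabla\dot S_{j-1}u\Vert_{L^\infty}X_j$ is folded into the first sum of $\Phi$ using Bernstein's inequality $\Vert\dot\Delta_{j'}u\Vert_{L^\infty}\lesssim 2^{j'\frac{N}{2}}\Vert\dot\Delta_{j'}u\Vert_{L^2}$.

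The most delicate piece is the fractional alignment commutator $-\mu(\Lambda^\alpha(uh(\sigma))-u\Lambda^\alpha h(\sigma))$ in $G$. Following the scheme in the proof of Lemma \ref{cor.K-P.Besov}, we split it into three pieces that, after invoking Lemma \ref{lemma.commutator_estimate} to trade one derivative of the $\Lambda^\alpha$-paraproduct commutator for a gain of order one (so that effectively only order $\alpha-1$ remains), acquire the same paraproduct/remainder form as the transport terms; composition estimate \eqref{eq:comp-Bes1} then converts $h(\sigma)$-norms into $\sigma$-norms. The main obstacle is the bookkeeping in \eqref{eq:LP-rem}: the admissibility window $0\leq\beta\leq r+\frac{N}{2}$ together with the requirement that $\sum_{j'>j-4}2^{(j-j')(\frac{N}{2}+\bar{s}+\delta s\alpha)}$ be absolutely convergent forces the piecewise choice $\delta=1$ whenever $\bar{s}+s\alpha\leq\frac{N}{2}+1-\alpha$ and $\delta=\frac{N/2-\bar{s}-\alpha+1}{s\alpha}$ on the complementary range, recovering exactly \eqref{def:del}. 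The transition occurs precisely where the low-frequency block of $u$ ceases to dominate the high-frequency block inside $Z_{s,\bar{s}}$, which is what makes the piecewise definition natural. Collecting all contributions and absorbing the sums in $j,j'$ into the single weight $\Phi(t,\tau)$ of \eqref{eq.Phi.decay} gives \eqref{eq.Decay.X.l.result}.
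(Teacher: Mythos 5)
Your proposal follows the paper's proof almost step for step: Duhamel's formula applied to \eqref{eq.Decay.xj} for $j\le j_0$, the weight $t^s2^{j(\bar{s}+s\alpha)}$ producing $\psi_j$ and the initial-data term via $\sup_{y\ge0}y^se^{-y}<\infty$, Bony decomposition of every source, Lemma \ref{lemma.bony.low.weight} for the paraproducts and remainders, Lemma \ref{lemma.commutator_estimate} for the transport commutators, and a Kato--Ponce bound for the alignment commutator. The one place your reasoning is genuinely wrong is the justification of the piecewise definition of $\delta$. You attribute it to the admissibility window $0\le\beta\le r+\frac{N}{2}$ in \eqref{eq:LP-rem} together with convergence of $\sum_{j'>j-4}2^{(j-j')(\frac{N}{2}+\bar{s}+\delta s\alpha)}$; but both conditions are already satisfied by $\delta=1$ for every admissible $(\bar{s},s)$ (the geometric sum converges as soon as $\frac{N}{2}+\bar{s}+\delta s\alpha>0$, which holds since $\bar{s}>-\frac{N}{2}$ and $\delta s\alpha\ge0$), so your criteria would never produce the second branch of \eqref{def:del}. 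The constraint actually arises in the remainder $R(\Lambda^\alpha h(\sigma),u)$ (the term $g_j^6$ in the paper): after distributing the weights so that $u$ carries $2^{j'(\frac{N}{2}+1)}$, the factor falling on $h(\sigma)$ has regularity index $\bar{s}-1+\alpha+\delta s\alpha$, and this must not exceed $\frac{N}{2}$ because the high-frequency component of $\sigma$ inside $Z_{s,\bar{s}}$ is only measured in $\dot{B}^{N/2}_{2,1}$. Saturating that inequality on the range $\bar{s}+s\alpha>\frac{N}{2}+1-\alpha$ yields exactly $\delta=\frac{N/2-\bar{s}-\alpha+1}{s\alpha}$; your heuristic about the low-frequency block of $u$ is likewise off target, since the ceiling is hit by the $\sigma$-factor. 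A minor further point: for the paraproduct commutator $\Lambda^\alpha(T_uh(\sigma))-T_u\Lambda^\alpha h(\sigma)$ the paper invokes the Kato--Ponce estimate \eqref{eq:Commu-est-Lp} of Lemma \ref{cor.commu.est}, not Lemma \ref{lemma.commutator_estimate}; your alternative would also work with $A(D)=\Lambda^\alpha$, so that difference is only cosmetic.
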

\begin{proof}[Proof of Lemma \ref{th.low-fre-est}]
By multiplying the first inequality of \eqref{eq.Decay.xj} with $e^{\bar{\mu} 2^{j\alpha}t}$ and integrating over the time interval $[0,t]$,
we infer that
\begin{equation*}
\begin{aligned}
  X_j(t)\le e^{-\bar{\mu} 2^{j\alpha}t} X_j(0) + C\int_0^t e^{-\bar{\mu} 2^{j\alpha}(t-\tau)} R_j(\tau) \dd \tau,
\end{aligned}
\end{equation*}
with
\begin{align}\label{eq:Rj1}
  R_j(t) := \Vert f_j(t)\Vert_{L^2}+\Vert g_j(t)\Vert_{L^2} + \Vert \widetilde{g}_j(t)\Vert_{L^2} +
  \Vert \nabla \dot S_{j-1}u(t)\Vert_{L^{\infty}}X_j(t).
\end{align}
Using the fact that
$\sup\limits_{t\ge 0}\sum\limits_{j\in\mathbb{Z}} 2^{\alpha js} t^s e^{-ct2^{\alpha j}}<+\infty $, we have that for every $s\in [0,1)$,
\begin{equation*}
  t^s X_j(t) \le C 2^{-js\alpha } X_j(0) + C\int_0^t \psi_j (t,\tau)\tau^sR_j(\tau) \dd \tau.
\end{equation*}
Multiplying both sides of the above equation with $2^{j(\bar{s}+s\alpha)}$ and taking the $\ell^1$-norm with respect to $j \leq j_0$ lead to
\begin{equation}\label{eq.Decay.X.l}
\begin{aligned}
  Z_{s,\bar{s}}^\ell (t)\le C\Vert (\sigma^\ell_0,u^\ell_0)\Vert_{ \dot{B}^{\bar{s}}_{2,1}}
  & + C \int_0^t \tau^s \sum_{j\le j_0}\psi_j(t,\tau) 2^{j(\bar{s}+s\alpha)}R_j(\tau) \dd \tau,
\end{aligned}
\end{equation}
where we have abbreviated $\psi_j(t,\tau)$ as $\psi_j$.

Next we treat the integral term of the inequality \eqref{eq.Decay.X.l}.
For the last term in $R_j$ given by \eqref{eq:Rj1}, by H\"older's inequality and \eqref{eq:Zl}, we see that
\begin{equation}\label{eq.5.1.r1}
\begin{aligned}
  \tau^s \sum_{j\le j_0} 2^{j(\bar{s}+s\alpha)}\psi_j \Vert \nabla \dot S_{j-1} u(\tau)\Vert_{L^{\infty}}X_j(\tau)
  \le C Z_{s,\bar{s}}(\tau)  \Big( \sup_{j\le j_0} \|\nabla \dot S_{j-1} u(\tau)\|_{L^\infty} \psi_j  \Big).
\end{aligned}
\end{equation}
We turn to the estimation of terms in \eqref{eq:Rj1} containing $f_j$, $g_j$ and $\widetilde{g}_j$.
Noting that $f_j$ is given by
\begin{align}\label{def:fj}
  f_j =(\gamma-1)\dot{\Delta}_j(\sigma\Div u) + \dot S_{j-1} u\cdot\nabla \dot{\Delta}_j \sigma - \dot{\Delta}_j ( u\cdot\nabla \sigma),
\end{align}
and using Bony's decomposition, we have the following splitting $f_j = \sum_{k=1}^6 f_j^k$ with
\begin{align*}
  & f_j^1 := (\gamma-1) \dot \Delta_j (T_\sigma \Div u), \quad f_j^2:= (\gamma-1) \dot \Delta_j (T_{\Div u}\sigma),
  \quad f_j^3 := (\gamma-1) \dot \Delta_j (R(\Div u,\sigma)) , \\
  & f_j^4  := S_{j-1} u\cdot\nabla \dot{\Delta}_j \sigma - \dot{\Delta}_j ( T_u \cdot\nabla\sigma), \quad f_j^5 := - \dot{\Delta}_j (T_{\nabla \sigma}\cdot u), \quad  f_j^6 := - \dot{\Delta}_j R(u\cdot, \nabla \sigma).
\end{align*}
Thanks to inequalities \eqref{eq.weight.1} and \eqref{eq.Z.sig.l+h}, we have that for every $\bar{s}+s\alpha < \frac{N}{2}$,
\begin{align}
  & \quad \tau^s\sum_{j\le j_0} \psi_j 2^{j(\bar{s}+s\alpha)} \big( \Vert f_j^1 \Vert_{L^2} + \Vert f_j^5 \Vert_{L^2} \big)  \nonumber \\
  & \lesssim \Big( \tau^s \sup_{j'\leq j_0+4} 2^{j'(\bar{s} +s\alpha)} \|\dot\Delta_{j'}\sigma\|_{L^2}  \Big)
  \Big( \sum_{j\le j_0}\sum_{\vert j-j'\vert \le 4}2^{j'(\frac{N}{2}+1)}\Vert \dot\Delta_{j'}u \Vert_{L^2}\psi_j \Big) \nonumber \\
  & \lesssim  Z_{s,\bar{s}}(\tau)\sum_{j\le j_0}\sum_{\vert j-j'\vert \le 4}2^{j'(\frac{N}{2}+1)}\Vert \dot\Delta_{j'}u \Vert_{L^2}\psi_j.
\end{align}
Using \eqref{eq.weight.2} and \eqref{eq.Z.sig.l+h}, we deduce that
\begin{align}
  \tau^s\sum_{j\le j_0} \psi_j 2^{j(\bar{s}+s\alpha)}\Vert f_j^2\Vert_{L^2}
  & \lesssim \Big(\tau^s \sum_{j'\leq j_0+4} 2^{j'(\bar{s} +s\alpha)} \|\dot\Delta_{j'} \sigma\|_{L^2} \Big)
  \sum_{j\le j_0}\sum_{j'\le j+4}2^{j'}\Vert \dot\Delta_{j'}u \Vert_{L^\infty}\psi_j   \nonumber \\
  &\lesssim  Z_{s,\bar{s}}(\tau)\sum_{j\le j_0}\sum_{j'\le j+4}2^{j'}\Vert \dot\Delta_{j'}u \Vert_{L^\infty}\psi_j .
\end{align}
By virtue of \eqref{eq:LP-rem} and \eqref{eq.Z.sig.l+h}, we see that for every $- \frac{N}{2} < \bar{s}+s \alpha < \frac{N}{2}$,
\begin{align}
  & \quad \tau^s \sum_{j\le j_0} \psi_j 2^{j(\bar{s}+s\alpha)} \big( \Vert f_j^3\Vert_{L^2} + \Vert f_j^6 \Vert_{L^2} \big)  \nonumber \\
  & \lesssim \Big(\tau^s \sup_{j'\in\mathbb{Z}} 2^{j'(\bar{s}+s\alpha)} \|\dot \Delta_{j'} \sigma\|_{L^2}  \Big)
  \Big(\sum_{j\le j_0}\sum_{j'> j-4}2^{(j-j')(\frac{N}{2}+\bar{s}+s\alpha)}2^{j'(\frac{N}{2}+1)}\Vert \dot\Delta_{j'} u
  \Vert_{L^2}\psi_j\Big)\nonumber  \\
  & \lesssim Z_{s,\bar{s}}(\tau)\sum_{j\le j_0}\sum_{j'> j-4}2^{(j-j')(\frac{N}{2}+\bar{s}+s\alpha)}2^{j'(\frac{N}{2}+1)}
  \Vert \dot\Delta_{j'} u \Vert_{L^2}\psi_j.
\end{align}
Applying Lemma \ref{lemma.commutator_estimate} to the term $f_j^4$ yields
\begin{equation*}
  \Vert f_j^4 \Vert_{L^2}
  \le C2^{-j}\sum_{j',j''=j-4}^{j+4} \big(\Vert \nabla \dot{\Delta}_{j''}u\Vert_{L^{\infty}}+\Vert \nabla \dot{S}_{j-1}u\Vert_{L^{\infty}}\big)\Vert \dot{\Delta}_{j'}\nabla \sigma\Vert_{L^2},
\end{equation*}
and in combination with inequality \eqref{eq.Z.sig.l+h}, we obtain
\begin{align}
  & \quad \tau^{s}\sum_{j\le j_0} \psi_j 2^{j(\bar{s}+s\alpha)} \Vert f_j^4 \Vert_{L^2} \nonumber \\
  & \lesssim Z_{s,\bar{s}}(\tau)\Big(\sum_{j\le j_0}\sum_{\vert j-j'\vert \le 4}2^{j'(\frac{N}{2}+1)}\Vert \Delta_{j'}u \Vert_{L^2}\psi_j
  + \sum_{j\le j_0}\sum_{j'\le j+4}2^{j'} \Vert \Delta_{j'} u \Vert_{L^\infty}\psi_j \Big).
\end{align}

For the terms $g_j$ and $\tilde{g}_j$ given by \eqref{eq:gj}-\eqref{eq:tild-gj} and \eqref{eq:FGd}, note that they both can be expressed as
\begin{align}\label{exp:gj}
  -\mu \mathcal{P}\dot{\Delta}_j \big(\Lambda^\alpha (uh(\sigma))- u\Lambda^\alpha h(\sigma)\big)
  + S_{j-1} u \cdot\nabla \mathcal{P} \dot{\Delta}_j u - \mathcal{P}\dot{\Delta}_j (u \cdot \nabla u),
\end{align}
where $\mathcal{P}$ is composed of smooth zero-order pseudo-differential operators, then $g_j$ and $\widetilde{g}_j$ both have the following decomposition $\sum_{k=1}^9 g_j^k$, with
\begin{equation*}
\begin{aligned}
  & g_j^1 : = - \mu \mathcal{P} \dot \Delta_j  \Lambda^\alpha (T_{u}h(\sigma)),\quad g_j^2 := - \mu \mathcal{P} \dot \Delta_j \Lambda^\alpha (T_{h(\sigma)}u),\quad
  g_j^3 : = - \mu \mathcal{P} \dot \Delta_j \Lambda^\alpha R(h(\sigma),u),\\
  & g_j^4 : = \mu \mathcal{P} \dot \Delta_j  (T_{u}\Lambda^\alpha h(\sigma)),\quad
  g_j^5 := \mu \mathcal{P} \dot \Delta_j T_{\Lambda^\alpha h(\sigma)}u,\quad g_j^6 := \mu \mathcal{P} \dot \Delta_j  R(\Lambda^\alpha h(\sigma),u), \\
  &  g_j^7 := S_{j-1} u\cdot\nabla \dot{\Delta}_j\mathcal{P}u - \mathcal{P}\dot{\Delta}_j ( T_u \cdot\nabla u),\quad g_j^8:=  \mathcal{P}\dot{\Delta}_j (T_{\nabla u}\cdot u),\quad g_j^9:=  \mathcal{P}\dot{\Delta}_j R(u\cdot,\nabla u).
\end{aligned}
\end{equation*}
For the terms $g^1_j + g^4_j$ and $g_j^8$,  
taking advantage of inequalities \eqref{eq:Commu-est-Lp}, \eqref{eq.Z.sig.l+h} and Lemma \ref{lemma.composition}, we obtain
\begin{align}
  & \quad \tau^s\sum_{j\le j_0}\psi_j 2^{j(\bar{s}+s\alpha)} \Vert g^1_j+g^4_j \Vert_{L^2}
  \lesssim \tau^s \sum_{j\leq j_0} \psi_j 2^{j(\bar{s}+s\alpha)} \|\dot \Delta_j\big(\Lambda^{\alpha}(T_u  h(\sigma))-T_u \Lambda^{\alpha}h(\sigma)\big) \|_{L^2}  \nonumber \\
  & \lesssim \tau^s \sum_{j\leq j_0} \psi_j 2^{j(\bar{s}+s\alpha)} \sum_{|j'-j|\leq 4}
  \|\Lambda^\alpha (\dot S_{j'-1} u\,\dot\Delta_j h(\sigma))
  - \dot S_{j'-1} u\, \Lambda^\alpha \dot \Delta_{j'} h(\sigma)\|_{L^2} \nonumber \\
  & \lesssim \tau^s \sum_{j\leq j_0} \psi_j 2^{j(\bar{s}+s\alpha)} \sum_{|j'-j|\leq 4}
  \Big( \|\nabla \dot S_{j'-1} u\|_{L^\infty} \|\Lambda^{\alpha-1} \dot \Delta_{j'}h(\sigma)\|_{L^2}
  + \|\Lambda^\alpha \dot S_{j-1} u\|_{L^\infty} \|\dot \Delta_{j'} h(\sigma)\|_{L^2} \Big)\nonumber \\
  & \lesssim_{j_0} \Big(\tau^s \sup_{j'\leq j_0+4} 2^{j'(\bar{s}+s\alpha)} \|\dot\Delta_{j'} h(\sigma)\|_{L^2} \Big) \Big(\sum_{j\le j_0}\sum_{j'\le j+4} 2^{j'}  \Vert \dot \Delta_{j'}u \Vert_{L^\infty} \psi_j \Big) \nonumber \\
  & \lesssim Z_{s,\bar{s}}(\tau)\sum_{j\le j_0}\sum_{j'\le j+4}2^{j'} \Vert \dot \Delta_{j'}u \Vert_{L^\infty}\psi_j ,
\end{align}
and
\begin{align}
  \tau^s \sum_{j\le j_0} \psi_j 2^{j(\bar{s}+s\alpha)} \Vert g_j^8 \Vert_{L^2}
  & \lesssim \Big(\tau^s \sup_{j'\leq j_0 + 4} 2^{j'(\bar{s}+ s\alpha)} \|\dot \Delta_{j'} u \|_{L^2} \Big)
  \Big(\sum_{j\le j_0}\sum_{j'\le j+4}2^{j'} \Vert \dot \Delta_{j'}u \Vert_{L^\infty}\psi_j  \Big) \nonumber \\
  & \lesssim Z_{s,\bar{s}}(\tau)\sum_{j\le j_0}\sum_{j'\le j+4}2^{j'} \Vert \dot\Delta_{j'}u \Vert_{L^\infty} \psi_j .
\end{align}
For the terms $g_j^2$ and $g_j^5$, using \eqref{eq.weight.1}, \eqref{eq.Z.sig.l+h} and Lemma \ref{lemma.composition},
we deduce that
\begin{align}
  & \quad \tau^s \sum_{j\le j_0}\psi_j 2^{j(\bar{s}+s\alpha)} \Vert g_j^2 \Vert_{L^2}
  \lesssim_{j_0} \tau^s \sum_{j\leq j_0} \psi_j 2^{j(\bar{s}+1 + s\alpha)} \|\dot\Delta_j (T_{h(\sigma)}u)\|_{L^2} \nonumber  \\
  & \lesssim \Big(\tau^s \sup_{j'\leq j_0+4} 2^{j'(\bar{s}+s\alpha)} \|\dot\Delta_{j'} h(\sigma)\|_{L^2} \Big) \Big(\sum_{j\le j_0}\sum_{\vert j-j'\vert \le 4} 2^{j'(\frac{N}{2}+1)} \Vert \dot \Delta_{j'}u \Vert_{L^2} \psi_j \Big) \nonumber \\
  & \lesssim Z_{s,\bar{s}}(\tau)\sum_{j\le j_0}\sum_{\vert j-j'\vert \le 4} 2^{j'(\frac{N}{2}+1)} \Vert \dot\Delta_{j'}u \Vert_{L^2} \psi_j,
\end{align}
and
\begin{align}
  & \quad \tau^s \sum_{j\le j_0} \psi_j 2^{j(\bar{s}+s\alpha)} \|g_j^5\|_{L^2} \nonumber \\
  & \lesssim \Big(\tau^s \sup_{j'\leq j_0 + 4} 2^{j'(\bar{s}-\frac{N}{2}-1 + s\alpha)} \|\dot \Delta_{j'} \Lambda^\alpha h(\sigma)\|_{L^\infty} \Big)
  \Big(\sum_{j\le j_0}\sum_{\vert j-j'\vert \le 4}2^{j'(\frac{N}{2}+1)} \Vert \dot \Delta_{j'}u \Vert_{L^2}\psi_j \Big) \nonumber  \\
  & \lesssim_{j_0} \Big(\tau^s \sup_{j'\leq j_0 + 4} 2^{j'(\bar{s}+ s\alpha)} \|\dot \Delta_{j'} h(\sigma)\|_{L^2} \Big)
  \Big(\sum_{j\le j_0}\sum_{\vert j-j'\vert \le 4}2^{j'(\frac{N}{2}+1)} \Vert \dot \Delta_{j'}u \Vert_{L^2}\psi_j \Big) \nonumber \\
  & \lesssim Z_{s,\bar{s}}(\tau)\sum_{j\le j_0}\sum_{\vert j-j'\vert \le 4}2^{j'(\frac{N}{2}+1)} \Vert \dot \Delta_{j'}u \Vert_{L^2}\psi_j.
\end{align}
For the terms $g_j^3$ and $g_j^6$, 
thanks to inequality \eqref{eq:LP-rem}, \eqref{eq.Z.sig.l+h} and Lemma \ref{lemma.composition} again, we infer that
\begin{align}
  & \quad \tau^s \sum_{j\le j_0}\psi_j 2^{j(\bar{s}+s\alpha)} \Vert g_j^3 \Vert_{L^2}
  \lesssim_{j_0} \tau^s \sum_{j\leq j_0} \psi_j 2^{j(\bar{s}+1 + s\alpha)} \|\dot\Delta_j R(h(\sigma),u)\|_{L^2} \nonumber  \\
  & \lesssim  \Big(\tau^s \sup_{j'\in \mathbb{Z}} 2^{j'(\bar{s} + s\alpha )} \|\dot \Delta_{j'} h(\sigma)\|_{L^2} \Big) \nonumber
     \Big(\sum_{j\le j_0} \sum_{j'>j-4}2^{(j-j')(\frac{N}{2}+1+\bar{s}+s\alpha )} 2^{j'(\frac{N}{2}+1)} \Vert \dot\Delta_{j'} u \Vert_{L^2}\psi_j \Big) \nonumber  \\
  & \lesssim_{j_0} Z_{s,\bar{s}}(\tau) \sum_{j\le j_0} \sum_{j'>j-4}2^{(j-j')(\frac{N}{2}+1+\bar{s}+s\alpha )} 2^{j'(\frac{N}{2}+1)} \Vert \dot\Delta_{j'} u \Vert_{L^2}\psi_j,
\end{align}
and
\begin{align}
  & \quad \tau^s \sum_{j\le j_0}\psi_j 2^{j(\bar{s}+s\alpha)} \Vert g_j^6 \Vert_{L^2} \nonumber \\
  & \lesssim  \Big(\tau^s \sup_{j'\in \mathbb{Z}} 2^{j'(\bar{s}-1+\alpha+ \delta s\alpha )} \|\dot \Delta_{j'} h(\sigma)\|_{L^2} \Big)
  \Big(\sum_{j\le j_0} \sum_{j'>j-4}2^{(j-j')(\frac{N}{2}+\bar{s}+\delta s\alpha )}
  2^{j'(\frac{N}{2}+1)} \Vert \dot\Delta_{j'} u \Vert_{L^2}\psi_j \Big) \nonumber  \\
  & \lesssim_{j_0} Z_{s,\bar{s}}(\tau)\sum_{j\le j_0}\sum_{j'>j-4} 2^{(j-j')(\frac{N}{2}+\bar{s}+ \delta s\alpha )}
  2^{j'(\frac{N}{2}+1)} \Vert \dot\Delta_{j'} u \Vert_{L^2}\psi_j ,
\end{align}
where $\delta$ is given by \eqref{def:del} so that $\bar{s}-1+\alpha+ \delta s\alpha \leq \frac{N}{2}$ (to fit the norm $Z_{s,\bar{s}}(\tau)$).
For the term $g_j^7$, 
with the help of Lemma \ref{lemma.commutator_estimate}, we find
\begin{align}\label{eq.5.1.rl}
  &\quad \tau^s\sum_{j\le j_0} \psi_j 2^{j(\bar{s}+s\alpha)} \Vert g_j^7 \Vert_{L^2} \nonumber \\
  & \lesssim \tau^s \sum_{j\leq j_0} \psi_j 2^{j(\bar{s}+s\alpha)}
  \Big(2^{-j}\sum_{j',j''=j-4}^{j+4} \big(\Vert \nabla \dot{\Delta}_{j''}u\Vert_{L^{\infty}} + \Vert \nabla \dot{S}_{j-1}u\Vert_{L^\infty}\big)\Vert \dot{\Delta}_{j'}\nabla u\Vert_{L^2} \Big) \nonumber \\
  &\lesssim \Big(\tau^s \sum_{j' \leq j_0+ 4} 2^{j'(\bar{s} +s\alpha )} \|\dot\Delta_{j'} u \|_{L^2} \Big)
  \Big(\sum_{j\le j_0}\sum_{\vert j-j'\vert \le 4}2^{j'(\frac{N}{2}+1)}\Vert \dot \Delta_{j'} u \Vert_{L^2}\psi_j
  + \sum_{j\le j_0}\sum_{j'\le j+4} 2^{j'} \Vert \dot\Delta_{j'}u \Vert_{L^\infty}\psi_j \Big) \nonumber \\
  &\lesssim Z_{s,\bar{s}}(\tau) \sum_{j\le j_0}\sum_{j'\le j+4} 2^{j'(\frac{N}{2}+1)} \Vert \dot\Delta_{j'}u \Vert_{L^2}\psi_j .
\end{align}
For the last term $g_j^9$, thanks to the spectral support property of dyadic operators and \eqref{eq.Z.u.l+h}, we see that
\begin{align}\label{eq.5.1.r.end}
  &\quad\tau^s \sum_{j\le j_0}\psi_j 2^{j(\bar{s}+s\alpha)} \Vert g^9_j  \Vert_{L^2}\nonumber \\
  & \lesssim \tau^s \sum_{j\leq j_0} \psi_j 2^{j(\bar{s} + s\alpha)} \Big( \sum_{j'\geq j-3} 2^{j\frac{N}{2}} \|\dot\Delta_{j'} u\|_{L^2}
  2^{j'} \|\widetilde{\dot \Delta}_j u\|_{L^2} \Big) \nonumber \\
  &\lesssim  \Big(\tau^s \sup_{j'\in \mathbb{Z}} 2^{j'(\bar{s} +\delta s\alpha + (1-\delta) s\alpha 1_{\{j'\leq j_0\}})} \|\dot \Delta_{j'}u \|_{L^2} \Big)\times \nonumber \\
  &\quad \times \Big(\sum_{j\le j_0} \sum_{j'>j-4}2^{(j-j')(\frac{N}{2}+\bar{s}+ \delta s\alpha + (1-\delta)s \alpha 1_{\{j'\leq j_0\}})} 2^{j'(\frac{N}{2}+1)}
  \Vert \dot\Delta_{j'} u \Vert_{L^2}\psi_j \Big) \nonumber  \\
  & \lesssim Z_{s,\bar{s}}(\tau) \sum_{j\le j_0}\sum_{j'>j-4}
  2^{(j-j')(\frac{N}{2}+\bar{s}+ s\alpha)}2^{j'(\frac{N}{2}+1)} \Vert \dot\Delta_{j'}u \Vert_{L^2}\psi_j,
\end{align}
where in the above we have used the fact that $2^{j(1-\delta)1_{\{j'\geq j_0\}}} \leq C$ for every $j\leq j_0$.

Inserting \eqref{eq.5.1.r1}--\eqref{eq.5.1.r.end} into \eqref{eq.Decay.X.l}, we thus conclude the inequality \eqref{eq.Decay.X.l.result}.
\end{proof}

\begin{lemma}\label{th.low-fre-est-n}
Under the assumption of Proposition \ref{th.decay.intro},
if $\bar{s}=-s_0\in (-\frac{N}{2}, \frac{N}{2}+1-\alpha)$ and $s\in [n,n+1)$ ($n\in \mathbb{N}$) satisfy $-\frac{N}{2} <-s_0+s\alpha\leq \frac{N}{2}+1-\alpha$,
we have
\begin{equation}\label{eq.Decay.X.l.result.s}
\begin{aligned}
  Z^\ell_{s,\bar{s}}(t)\le C Z_{s-1,\bar{s}}^\ell(t) + C\int_0^t  Z_{s,\bar{s}}(\tau) \Psi(t,\tau) \dd \tau,
\end{aligned}
\end{equation}
where
\begin{equation}\label{eq.Psi.decay}
\begin{aligned}
  \Psi(t,\tau) & :=
  \sum_{j\le j_0} \sum_{j'\le j+4}2^{j'(\frac{N}{2} +1)} \Vert \dot\Delta_{j'}u(\tau) \Vert_{L^2} e^{-\bar{\mu} 2^{j\alpha}(t-\tau)} \\ & \quad
  +\sum_{j\le j_0}\sum_{j'>j-4}2^{(j-j')(\frac{N}{2} + \bar{s} + s \alpha)}2^{j'(\frac{N}{2}+1)} \Vert \dot\Delta_{j'}u(\tau) \Vert_{L^2} e^{-\bar{\mu} 2^{j\alpha}(t-\tau)} .
\end{aligned}
\end{equation}
\end{lemma}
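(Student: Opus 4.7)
The plan is to adapt the proof of Lemma \ref{th.low-fre-est} to the range $s\in[n,n+1)$ with $n\ge 1$, treating the conclusion as a step in a bootstrap in which $Z_{s-1,\bar s}^\ell$ is already controlled from the previous iteration. First I will return to the low-frequency ODE from \eqref{eq.Decay.xj}, namely $\frac{d}{dt}X_j+\bar\mu 2^{j\alpha}X_j\le C R_j(t)$ for $j\le j_0$, with $R_j$ as in \eqref{eq:Rj1}. Multiplying by $t^s$ gives
\begin{equation*}
\frac{d}{dt}(t^s X_j)+\bar\mu 2^{j\alpha}(t^s X_j)\le s t^{s-1}X_j+C t^s R_j,
\end{equation*}
and Duhamel's formula (using that $t^s X_j$ vanishes at $t=0$ since $s\ge 1$) yields
\begin{equation*}
t^s X_j(t)\le s\int_0^t e^{-\bar\mu 2^{j\alpha}(t-\tau)}\tau^{s-1}X_j(\tau)\,d\tau+C\int_0^t e^{-\bar\mu 2^{j\alpha}(t-\tau)}\tau^s R_j(\tau)\,d\tau.
\end{equation*}

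For the second integral I will multiply by $2^{j(\bar s+s\alpha)}$, sum over $j\le j_0$ and rerun, term by term, the Bony decompositions $f_j=\sum f_j^k$, $g_j,\widetilde g_j=\sum g_j^k$ used in the proof of Lemma \ref{th.low-fre-est}, invoking the weighted paraproduct and remainder estimates of Lemma \ref{lemma.bony.low.weight}, the commutator bounds in Lemmas \ref{corollary.commutator_S_j}, \ref{lemma.commutator_estimate} and \ref{cor.commu.est}, and the composition estimate of Lemma \ref{lemma.composition} to control $h(\sigma)$. The only structural change compared with Lemma \ref{th.low-fre-est} is that here the factor $\tau^s$ stays attached to the nonlinear quantities in $R_j$ and is absorbed, via \eqref{eq.Z.sig.l+h}--\eqref{eq.Z.u.l+h}, into a factor $Z_{s,\bar s}(\tau)$; the surviving kernel is plainly $e^{-\bar\mu 2^{j\alpha}(t-\tau)}$ with no $t^s\tau^{-s}$ factor, which is precisely the kernel $\Psi(t,\tau)$ in \eqref{eq.Psi.decay}. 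The hypothesis $-\frac N2<-s_0+s\alpha\le \frac N2+1-\alpha$ is exactly what is needed to keep every paraproduct and remainder estimate in the valid index range.

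The genuinely new contribution is the first integral, which must be shown to yield $C Z_{s-1,\bar s}^\ell(t)$. For each $j\le j_0$ I will use the pointwise bound $\tau^{s-1}X_j(\tau)\le 2^{-j(\bar s+(s-1)\alpha)}Z_{s-1,\bar s}^\ell(\tau)$ read off the definition of $Z_{s-1,\bar s}^\ell$; after multiplication by $2^{j(\bar s+s\alpha)}$ the $j$-weight collapses to $2^{j\alpha}$, and summation produces the kernel $\sum_{j\le j_0}2^{j\alpha}e^{-\bar\mu 2^{j\alpha}(t-\tau)}$. The plan is then to split the $j$-sum at the scale $j^*(t-\tau)$ defined by $2^{j^*\alpha}(t-\tau)\sim 1$, bound the high scales by the geometric decay of the exponential and the low scales by the linearization $1-e^{-x}\le x$, and use the uniform-in-time control of $Z_{s-1,\bar s}^\ell$ (guaranteed by the previous step of the induction, which justifies writing the bound as $Z_{s-1,\bar s}^\ell(t)$ rather than $\sup_{\tau\le t}Z_{s-1,\bar s}^\ell(\tau)$) to convert the resulting integral into the stated factor.

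The main obstacle will be exactly this drift term: the kernel $\sum_{j\le j_0}2^{j\alpha}e^{-\bar\mu 2^{j\alpha}(t-\tau)}$ behaves like $(t-\tau)^{-1}$ for $t-\tau\gtrsim 2^{-j_0\alpha}$, so naively pulling out a $\sup$ of $Z_{s-1,\bar s}^\ell$ leaves a logarithmic loss. Avoiding this loss forces a careful scale-by-scale argument, leveraging the induction and the fact that $Z_{s-1,\bar s}^\ell$ is already time-uniformly bounded at the stage this lemma is invoked. Everything in Step 2 is a bookkeeping rerun of the proof of Lemma \ref{th.low-fre-est} with $\psi_j(t,\tau)$ replaced by $e^{-\bar\mu 2^{j\alpha}(t-\tau)}$, and should go through without further surprises.
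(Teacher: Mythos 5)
Your overall architecture matches the paper's: the weighted ODE $\frac{d}{dt}(t^sX_j)+\bar\mu 2^{j\alpha}t^sX_j\le Ct^sR_j+st^{s-1}X_j$ for $j\le j_0$, Duhamel, and then a term-by-term rerun of the nonlinear estimates of Lemma \ref{th.low-fre-est} with the kernel $\psi_j(t,\tau)$ replaced by $e^{-\bar\mu 2^{j\alpha}(t-\tau)}$; that part is exactly what the paper does and is fine. The problem is the drift term, which you correctly single out as the new ingredient but do not actually close. Your plan is to apply the pointwise bound $\tau^{s-1}X_j(\tau)\le 2^{-j(\bar s+(s-1)\alpha)}Z^\ell_{s-1,\bar s}(\tau)$ and then sum over $j\le j_0$, producing the kernel $K(t-\tau)=\sum_{j\le j_0}2^{j\alpha}e^{-\bar\mu 2^{j\alpha}(t-\tau)}$. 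This kernel is genuinely not integrable uniformly in $t$: $\int_0^tK(t-\tau)\,d\tau=\bar\mu^{-1}\sum_{j\le j_0}(1-e^{-\bar\mu 2^{j\alpha}t})\sim \alpha^{-1}\log_2(2^{j_0\alpha}t)$ for large $t$, and the scale-splitting you describe (geometric decay above $j^*$, linearization below) only reconfirms the $(t-\tau)^{-1}$ behavior on both sides of the split. Pairing this with the only input available from the induction, namely $\sup_\tau Z^\ell_{s-1,\bar s}(\tau)\le C$, yields $C\log(2+t)$, not the constant required by \eqref{eq.Decay.X.l.result.s}; and since the lemma is iterated in $n$, these logarithms would compound.

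The fix — and what the paper does — is to \emph{not} collapse the $j$-sum into a single kernel before integrating in time. For each fixed $j\le j_0$ one has $2^{j\alpha}\int_0^te^{-\bar\mu 2^{j\alpha}(t-\tau)}\,d\tau\le\bar\mu^{-1}$ uniformly in $j$ and $t$, so
\begin{equation*}
\sum_{j\le j_0}2^{j(\bar s+s\alpha)}\int_0^te^{-\bar\mu 2^{j\alpha}(t-\tau)}\tau^{s-1}X_j(\tau)\,d\tau
\le \bar\mu^{-1}\sum_{j\le j_0}\sup_{\tau\le t}\Big(2^{j(\bar s+(s-1)\alpha)}\tau^{s-1}X_j(\tau)\Big),
\end{equation*}
i.e.\ a Chemin--Lerner-in-time ($\ell^1_j L^\infty_\tau$) version of $Z^\ell_{s-1,\bar s}$, which the blockwise Duhamel estimates of the previous induction step do control (the paper writes this quantity, with a slight abuse, as $Z^\ell_{s-1,\bar s}(t)$). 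In short: keep the dyadic localization of $X_j$ in the time integral; discarding it is where your argument loses the logarithm, and the "careful scale-by-scale argument" you defer to does not exist in the form you describe.
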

\begin{proof}[Proof of Lemma \ref{th.low-fre-est-n}]
From the inequality \eqref{eq.Decay.xj}, we have that for every $j\leq j_0$,
\begin{align}
\frac{d}{dt}\big(t^sX_j(t)\big)+ \bar{\mu} 2^{j\alpha}t^sX_j(t) \le C t^s R_j(t) + s t^{s-1} X_j(t),
\end{align}
where $R_j$ is given by \eqref{eq:Rj1}.
Multiplying the above inequality with $e^{\bar{\mu} 2^{j\alpha}t}$ and integrating over $[0,t]$, we obtain
\begin{equation*}
\begin{aligned}
  t^s X_j(t)\le  \int_0^t e^{-\bar{\mu} 2^{j\alpha}(t-\tau)} \tau^{s-1} X_j(\tau) \dd \tau
  + C\int_0^t e^{-\bar{\mu} 2^{j\alpha}(t-\tau)} \tau^s R_j(\tau) \dd \tau.
\end{aligned}
\end{equation*}
Multiplying the above equation with $2^{j(\bar{s}+s\alpha)}$ and taking the $\ell^1$-norm with respect to $j \leq j_0$,
we find
\begin{equation}\label{eq.Decay.Zs.l}
\begin{aligned}
  Z_{s,\bar{s}}^\ell(t) & \le Z_{s-1,\bar{s}}^\ell(t) 2^{j\alpha}\int_0^t e^{-\bar{\mu} 2^{j\alpha}(t-\tau)} \dd \tau
  + C\int_0^t \tau^s \sum_{j\le j_0}2^{j(\bar{s}+s\alpha)}R_j(\tau)e^{-\bar{\mu} 2^{j\alpha}(t-\tau)}  \dd \tau\\
  &\le CZ_{s-1,\bar{s}}^\ell(t) + C\int_0^t\tau^s \sum_{j\le j_0}2^{j(\bar{s}+s\alpha)}R_j(\tau) e^{-\bar{\mu} 2^{j\alpha}(t-\tau)} \dd \tau.
\end{aligned}
\end{equation}
Similarly as \eqref{eq.Decay.X.l}--\eqref{eq.5.1.r.end}, we can estimate the last term of the above inequality as follows
\begin{equation}\label{eq.Decay.X.l.s}
\begin{aligned}
  \int_0^t\tau^s \sum_{j\le j_0}2^{j(\bar{s}+s\alpha)}R_j(\tau) e^{-\bar{\mu} 2^{j\alpha}(t-\tau)} \dd \tau
  \le C\int_0^t Z_{s,\bar{s}}(\tau)\Psi(t,\tau)\dd \tau.
\end{aligned}
\end{equation}
Inserting \eqref{eq.Decay.X.l.s} into \eqref{eq.Decay.Zs.l}, we obtain the desired inequality \eqref{eq.Decay.X.l.result.s}.
\end{proof}

\subsection{High-frequency estimates}
Since $X_j$ exhibits a damping effect in the second inequality of \eqref{eq.Decay.xj},
one can generally expect to derive an exponential decay for $X_j$. But in order to be coincident with the low-frequency case,
we instead will prove a polynomial decay estimate.
\begin{lemma}\label{th.high-freq-est}
Under the assumption of Proposition \ref{th.decay.intro}, for every $-s_0 \leq \bar{s} \leq \frac{N}{2}+1-\alpha$ and $-\frac{N}{2}< \bar{s} + s \alpha < \frac{N}{2}$,
we have
\begin{equation}\label{eq.Decay.X.h.result}
\begin{aligned}
  Z_s^h(t) \le C\Big(\Vert \sigma_0\Vert_{ \widetilde{B}^{\frac{N}{2}+1-\alpha,\frac{N}{2}}_{2,1}}
  + \Vert u_0 \Vert_{\dot{B}^{\frac{N}{2}+1-\alpha}_{2,1}} \Big) + C\int_0^t \Vert u(\tau)\Vert_{\dot{B}^{\frac{N}{2}+1}_{2,1}} Z_{s,\bar{s}}(\tau)\dd \tau.
\end{aligned}
\end{equation}
\end{lemma}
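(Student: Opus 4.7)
The starting point is the second (high-frequency, $j>j_0$) line of \eqref{eq.Decay.xj}. Since $\mu_h$ is a fixed positive constant independent of $j$, this is simply a damped ODE for $X_j$, and the integrating-factor method gives, for every $j>j_0$,
\[
  X_j(t) \le e^{-\mu_h t}X_j(0) + C\int_0^t e^{-\mu_h(t-\tau)} R_j(\tau)\,\dd\tau,
\]
where $R_j(\tau):=2^{j(\alpha-1)}\Vert f_j\Vert_{L^2} + \Vert g_j\Vert_{L^2} + \Vert \widetilde g_j\Vert_{L^2} + \Vert \nabla \dot S_{j-1}u\Vert_{L^\infty}X_j$.

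Next I would multiply by $t^s$ and sum over $j>j_0$ with the weight $2^{j(\frac{N}{2}+1-\alpha)}$. For the data piece, $t^s e^{-\mu_h t}$ is uniformly bounded on $[0,\infty)$, and by the equivalence \eqref{eq:Yj-high-equ} the weighted sum of $X_j(0)$ is comparable to $\Vert \sigma_0^h\Vert_{\dot B^{N/2}_{2,1}}+\Vert u_0^h\Vert_{\dot B^{N/2+1-\alpha}_{2,1}}$, so it is dominated by the initial-data norm in the claim. To move the $t^s$ factor inside the integral I would use the subadditivity $t^s \le \tau^s + (t-\tau)^s$ (valid for $s\in(0,1)$) together with $(t-\tau)^s e^{-\mu_h(t-\tau)}\le C_s$, which converts $t^s e^{-\mu_h(t-\tau)}$ into $\tau^s e^{-\mu_h(t-\tau)}$ plus a controlled remainder bounded by $C_s e^{-(\mu_h/2)(t-\tau)}$. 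The contribution of the remainder will ultimately be absorbed into $C \Vert u\Vert_{L^1(\R^+;\dot B^{N/2+1}_{2,1})}$, hence into the constant multiplying the initial-data term.

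The substance of the proof is the source estimate
\[
  \sum_{j>j_0} 2^{j(\frac{N}{2}+1-\alpha)} R_j(\tau) \le C \Vert u(\tau)\Vert_{\dot B^{N/2+1}_{2,1}}\,\big(\Vert \sigma(\tau)\Vert_{\widetilde B^{N/2+1-\alpha,N/2}} + \Vert u(\tau)\Vert_{\dot B^{N/2+1-\alpha}_{2,1}}\big).
\]
I would derive this by expanding $f_j, g_j, \widetilde g_j$ via Bony's decomposition (as in \eqref{def:fj} and \eqref{exp:gj}) and invoking Lemma \ref{lemma.Bilinear} for the paraproducts and remainders, Lemma \ref{lemma.Remark2.103} for the transport commutators, Corollary \ref{corollary.kato_ponce} (or Lemma \ref{cor.K-P.Besov}) for the fractional commutator inside $g_j$, together with Lemma \ref{lemma.composition} to convert $h(\sigma)$-norms into $\sigma$-norms. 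Because the bracket on the right-hand side is uniformly bounded in $\tau$ under assumption \eqref{eq:sig-u-bdd2}, and because $Z_{s,\bar s}(\tau)$ controls $\tau^s$ times the same bracket (its high-frequency piece coincides with $Z_s^h(\tau)$, and the uniform bound on the full hybrid norm absorbs the low-frequency part into the constant), the $\tau^s e^{-\mu_h(t-\tau)}$ contribution integrates against $\Vert u(\tau)\Vert_{\dot B^{N/2+1}_{2,1}}$ to give exactly the target $C\int_0^t \Vert u(\tau)\Vert_{\dot B^{N/2+1}_{2,1}} Z_{s,\bar s}(\tau)\,\dd\tau$.

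The main obstacle will be the source estimate at the high-frequency scale: in every term of $R_j$ one must place one factor at the critical regularity $\dot B^{N/2+1}_{2,1}$ of $u$ (matching the right-hand side) while keeping the other factor at a norm controlled by $\Vert \sigma\Vert_{\widetilde B^{N/2+1-\alpha,N/2}}+\Vert u\Vert_{\dot B^{N/2+1-\alpha}_{2,1}}$. This is delicate for the fractional commutator $\Lambda^\alpha(uh(\sigma))-u\Lambda^\alpha h(\sigma)$, where several balanced splittings are available in Corollary \ref{corollary.kato_ponce}, and the choice of indices must be tuned so that the extra derivative falls on $u$. Once this distribution is fixed, the remaining terms are handled by routine paraproduct/remainder estimates analogous to those carried out in the low-frequency case, now summed with the high-frequency weight.
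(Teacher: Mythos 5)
Your overall strategy coincides with the paper's: integrate the damped ODE for $X_j$ ($j>j_0$) from \eqref{eq.Decay.xj}, attach the weight $t^s\,2^{j(\frac N2+1-\alpha)}$, sum over $j>j_0$, and reduce everything to a weighted source estimate. Your device for moving $t^s$ past the Duhamel kernel (subadditivity plus $(t-\tau)^se^{-\mu_h(t-\tau)}\le C_s$) is a legitimate alternative to the paper's trick of writing $\mu_h\ge st^{-1}$ for $t\ge t_0:=s\mu_h^{-1}$ and treating $[0,t_0]$ separately; just note that the lemma is also invoked in the iteration for $s\in[n,n+1)$, $n\ge 1$, so you need $t^s\le 2^{s-1}(\tau^s+(t-\tau)^s)$ rather than the $s\in(0,1)$ version you quote — a trivial repair.

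There is, however, a genuine flaw in your last step. You claim that $Z_{s,\bar s}(\tau)$ controls $\tau^s\big(\Vert\sigma(\tau)\Vert_{\widetilde B^{\frac N2+1-\alpha,\frac N2}}+\Vert u(\tau)\Vert_{\dot B^{\frac N2+1-\alpha}_{2,1}}\big)$, with the low-frequency part "absorbed into the constant." Neither half of this works: $\tau^s$ times a bounded quantity is not bounded, and for $j\le j_0$ one has $2^{j(\frac N2+1-\alpha)}\le C\,2^{j(\bar s+s\alpha)}$ only when $\bar s+s\alpha\le\frac N2+1-\alpha$, which fails in the principal application $\bar s=\frac N2+1-\alpha$, $s>0$. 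So $\tau^s\Vert u^\ell(\tau)\Vert_{\dot B^{\frac N2+1-\alpha}_{2,1}}$ (and likewise the $\dot B^{\frac N2+1-\alpha}_{2,1}$ component of the hybrid norm of $\sigma$) is simply not dominated by $Z_{s,\bar s}(\tau)$. The paper's source estimate is finer precisely here: for each term of $\widetilde R_j$ the factor receiving the weight $\tau^s$ is either the \emph{full} $\dot B^{\frac N2}_{2,1}$ norm of $\sigma$ (or of $h(\sigma)$), which is controlled by $Z_{s,\bar s}$ via \eqref{eq.Z.sig.l+h} under the hypothesis $\bar s+s\alpha\le\frac N2$, or only the \emph{high-frequency} norm $\Vert u^h\Vert_{\dot B^{\frac N2+1-\alpha}_{2,1}}=\tau^{-s}Z^h_s(\tau)$ — the commutator of Lemma \ref{lemma.commutator_estimate}, the paraproduct $T_{\nabla u}u$ and the remainder $R(u,\nabla u)$ all localize that $u$-factor at frequencies $\gtrsim 2^{j_0}$ when the output frequency is $j>j_0$. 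Your Bony/commutator computations would in fact deliver these sharper factors; the error is in collapsing them into the full low-plus-high bracket before multiplying by $\tau^s$. State the source estimate with $\Vert u^h\Vert_{\dot B^{\frac N2+1-\alpha}_{2,1}}$ and $\Vert\sigma\Vert_{\dot B^{\frac N2}_{2,1}}$ in place of your bracket and the argument closes.
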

\begin{proof}[Proof of Lemma \ref{th.high-freq-est}]
Starting from \eqref{eq.Decay.xj} and letting $t_0 = s\mu_h^{-1}$, we have that for every $t\ge t_0$,
\begin{equation*}
\begin{aligned}
  \frac{d}{dt}X_j(t) + st^{-1} X_j(t) \le C \widetilde{R}_j(t),
\end{aligned}
\end{equation*}
with 
\begin{align}\label{eq:tild-Rj}
  \widetilde{R}_j(t) := 2^{j(\alpha-1)}\Vert f_j(t) \Vert_{L^2} + \Vert g_j(t)\Vert_{L^2} + \Vert \widetilde{g}_j(t)\Vert_{L^2}
  + \Vert \nabla \dot S_{j-1}u(t)\Vert_{L^\infty} X_j(t).
\end{align}
Thus, we deduce that
\begin{equation}
  t^s X_j(t) \le t_0^s X_j(t_0) +  C \int_0^t \tau^s \widetilde{R}_j(\tau) \dd \tau,\ \quad \forall t\ge t_0.
\end{equation}
Recalling notations \eqref{eq:Xj} and \eqref{eq:Zh}, we multiply the above inequality by $2^{j(\frac{N}{2}+1-\alpha)}$ and take the $\ell^1$-norm over $j > j_0 $ to get that for every $t\geq t_0$,
\begin{equation}\label{eq:Zh-es1}
  Z_s^h(t) \le C \Big(\Vert \sigma\Vert_{L^\infty_{t_0}(\widetilde{B}^{\frac{N}{2}+1-\alpha,\frac{N}{2}}_{2,1})}
  + \Vert u\Vert_{L^\infty_{t_0}(\dot{B}^{\frac{N}{2}+1-\alpha}_{2,1})} \Big) + C\sum_{j> j_0}2^{j(\frac{N}{2}+1-\alpha)} \int_0^t \tau^s \widetilde{R}_j(\tau) \dd \tau.
\end{equation}
The above inequality also holds when $t\le t_0$: 
\begin{equation*}
\begin{aligned}
  Z_s^h(t) =  t^s \Big(\Vert \sigma^h(t)\Vert_{ \dot{B}^{\frac{N}{2}}_{2,1}} + \Vert u^h(t)\Vert_{\dot{B}^{\frac{N}{2}+1-\alpha}_{2,1}} \Big)
  \le t_0^s \Big(\Vert \sigma\Vert_{L^\infty_{t_0}(\widetilde{B}^{\frac{N}{2}+1-\alpha,\frac{N}{2}}_{2,1})}
  + \Vert u \Vert_{L^\infty_{t_0}(\dot{B}^{\frac{N}{2}+1-\alpha}_{2,1})} \Big).
\end{aligned}
\end{equation*}
Note that in view of \eqref{eq:sig-u-bdd2}, the above right-hand term is under control.

Next, we deal with the term in \eqref{eq:Zh-es1} containing $\widetilde{R}_j$ (given by \eqref{eq:tild-Rj}). The last term in $\widetilde{R}_j$ can be estimated as
\begin{equation*}
\begin{aligned}
  \sum_{j> j_0}\tau^s2^{j(\frac{N}{2}+1-\alpha)} \Vert \nabla \dot S_{j-1} u(\tau)\Vert_{L^{\infty}} X_j(\tau) \le C Z_s^h(\tau) \Vert u(\tau)\Vert_{\dot{B}^{\frac{N}{2}+1}_{2,1}}.
\end{aligned}
\end{equation*}
Recalling that $f_j$ is given by \eqref{def:fj}, and by virtue of Lemmas \ref{lemma.Bilinear} and \ref{lemma.Remark2.103}, we infer that
\begin{equation*}
\begin{aligned}
  &\quad\tau^s\sum_{j> j_0}2^{j\frac{N}{2}} \Vert f_j(\tau)\Vert_{L^2}
 \\ &\le C\tau^s\sum_{j> j_0}2^{j\frac{N}{2}} \Big(\Vert \dot{\Delta}_j(\sigma\Div u)\Vert_{L^2}
  +\Vert S_{j-1} u\cdot\nabla \dot{\Delta}_j \sigma - \dot{\Delta}_j ( u\cdot\nabla \sigma)\Vert_{L^2}+
  \Vert R(u,{\nabla\sigma})\Vert_{L^2}\Big)\\
  &\le C\Vert u(\tau)\Vert_{\dot{B}^{\frac{N}{2}+1}_{2,1}}\tau^s\Vert \sigma(\tau)\Vert_{\dot{B}^{\frac{N}{2}}_{2,1}}
  \le C\Vert u(\tau)\Vert_{\dot{B}^{\frac{N}{2}+1}_{2,1}} Z_{s,\bar{s}}(\tau).
\end{aligned}
\end{equation*}
Noting that $g_j$ and $\widetilde{g}_j$ have the same expression formula \eqref{exp:gj}, and thanks to Lemmas \ref{lemma.Bilinear}, \ref{lemma.composition} and \ref{lemma.commutator_estimate}, we find
\begin{align*}
  & \quad \tau^s\sum_{j> j_0}2^{j(\frac{N}{2}+1-\alpha)} \big( \Vert g_j(\tau)\Vert_{L^2} + \|\widetilde{g}_j(\tau)\|_{L^2} \big) \\
  & \le C\tau^s\sum_{j> j_0}2^{j(\frac{N}{2}+1-\alpha)} \Vert \dot{\Delta}_j \big(\Lambda^\alpha (uh(\sigma))- u\Lambda^\alpha h(\sigma)\big)\Vert_{L^2} \\
  & \quad+C\tau^s\sum_{j> j_0}2^{j(\frac{N}{2}+1-\alpha)} \Vert S_{j-1} u\cdot\nabla\dot{\Delta}_j \mathcal{P} u - \mathcal{P} \dot{\Delta}_j (T_u\cdot \nabla u)\Vert_{L^2}\\
  &\quad+C\tau^s\sum_{j> j_0}2^{j(\frac{N}{2}+1-\alpha)} \Vert \mathcal{P} \dot{\Delta}_j (T_{\nabla u} u)+ \mathcal{P} \dot{\Delta}_j R(u, \nabla u)\Vert_{L^2}\\
  &\le C\Vert u\Vert_{\dot{B}^{\frac{N}{2}+1}_{2,1}}\tau^s\Vert h(\sigma)\Vert_{\dot{B}^{\frac{N}{2}}_{2,1}} + C\Vert u\Vert_{\dot{B}^{\frac{N}{2}+1}_{2,1}}\tau^s\Vert u^h\Vert_{\dot{B}^{\frac{N}{2}+1-\alpha}_{2,1}}
  \le C\Vert u(\tau)\Vert_{\dot{B}^{\frac{N}{2}+1}_{2,1}} Z_{s,\bar{s}}(\tau).
\end{align*}
Hence collecting the above estimates yields \eqref{eq.Decay.X.h.result}, as desired.
\end{proof}

\subsection{Proof of Proposition \ref{th.decay.intro}}
Firstly, we prove  a sketchy version of the decay estimate:
\begin{align}\label{eq:Zt-bdd}
  Z_{s,\bar{s}}(t) = Z_{s,\bar{s}}^\ell(t)+Z_s^h(t) \le C,
\end{align}
where $(Z_{s,\bar{s}}^\ell, Z_s^h)$ is given by \eqref{eq:Zl}--\eqref{eq:Zh}, and $(s,\bar{s})$ satisfies that
\begin{equation}\label{cond:s-bars}
\begin{cases}
  s\in [0,1-\frac{1}{\alpha}),\quad & \textrm{if}\;\; \bar{s}= \frac{N}{2}+1-\alpha, \\
  s\geq 0, \;\bar{s} + s\alpha \leq \frac{N}{2}+1-\alpha,\quad & \textrm{if}\;\; \bar{s} = -s_0 \in (-\frac{N}{2},\frac{N}{2}+1-\alpha).
\end{cases}
\end{equation}

We first consider the proof of \eqref{eq:Zt-bdd} with an additional condition $0\leq s<1$.
Through combining \eqref{eq.Decay.X.l.result} with \eqref{eq.Decay.X.h.result}, we have
\begin{equation*}
\begin{aligned}
  Z_{s,\bar{s}}(t)\le C + C\int_0^t \Big(\Phi(t,\tau) + \Vert u(\tau)\Vert_{\dot{B}^{\frac{N}{2}+1}_{2,1}}\Big) Z_{s,\bar{s}}(\tau) \dd \tau,
\end{aligned}
\end{equation*}
where $C$ depends on $\Vert \sigma_0\Vert_{ \widetilde{B}^{\bar{s},\frac{N}{2}}}$ and
$\Vert u_0 \Vert_{\widetilde{B}^{\bar{s},\frac{N}{2}+1-\alpha}}$ (noting that if $\bar{s}=-s_0$, the additional assumption
$(\sigma_0^\ell,u_0^\ell) \in \dot{B}^{-s_0}_{2,\infty}$ is needed).
Gronwall's inequality guarantees that
\begin{equation}\label{eq.Decay.X}
\begin{aligned}
  Z_{s,\bar{s}}(t) \le C \exp \Big\{C\int_0^t \Big(\Phi(t,\tau)+\Vert u(\tau)\Vert_{\dot{B}^{\frac{N}{2}+1}_{2,1}}\Big)\dd \tau \Big\}.
\end{aligned}
\end{equation}
Recalling the definition of $\Phi$ in \eqref{eq.Phi.decay} and using H\"older's inequality, we infer that for every $\frac{1}{1-s}<r<\infty$ and $\frac{1}{r}+\frac{1}{r'}=1$,
\begin{align}\label{es:Phi1}
  & \quad \int_0^t \Phi(t,\tau) \dd \tau \nonumber \\
  & \le C\bigg(\sum_{j\le j_0}\sum_{j'\le j+4}2^{-(j-j')\frac{\alpha}{r'}} 2^{j'(\frac{N}{2} +1 -\frac{\alpha}{r'})}\Vert \dot\Delta_{j'} u\Vert_{L^{r}_t(L^2)} \nonumber \\
  & \quad  + \sum_{j\le j_0}\sum_{j'>j-4}2^{(j-j')(\frac{N}{2}+\bar{s} + \delta s \alpha -\frac{\alpha}{r'})}
  2^{j'(\frac{N}{2}+1-\frac{\alpha}{r'})} \Vert \dot\Delta_{j'} u \Vert_{L^r_t (L^2)}\bigg)
  \Big(\sup_{j\in\mathbb{Z}}\int_0^t \psi^{r'}_j (t,\tau)2^{j\alpha}\dd \tau\Big)^{\frac{1}{r'}} \nonumber \\
  & =: C (I_1 + I_2 ) \Big( \sup_{j\in \mathbb{Z}} \int_0^t \psi^{r'}_j (t,\tau)2^{j\alpha}\dd \tau\Big)^{\frac{1}{r'}}.
\end{align}

In order to treat the integral term on the right-hand of \eqref{es:Phi1}, we recall the following elementary result
(it can be easily deduced by respectively considering the contribution from $[0,t/2]$ and $[t/2,t]$,
e.g. see \cite[Lemma 3.4]{xue2017differentiability}).
\begin{lemma}\label{lemma.gamma.split}
Let $\alpha\in (0,2)$, $c>0$ and $j\in \mathbb{Z}$. Then for every $0\le s<1$ we have
\begin{align*}
  \int_0^t e^{-c2^{j\alpha}(t-\tau)}t^s\tau^{-s}\dd \tau\le C2^{-j\alpha}.
\end{align*}
\end{lemma}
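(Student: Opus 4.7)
The plan is to split the time interval $[0,t]$ as $[0,t/2]\cup[t/2,t]$ and bound the contribution of each piece separately, as suggested in the statement.

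On the near-zero piece $[0,t/2]$, I would use the fact that $t-\tau\geq t/2$ to pull the exponential out of the integral, giving $e^{-c2^{j\alpha}(t-\tau)}\leq e^{-c2^{j\alpha}t/2}$. Then since $s<1$, the factor $\tau^{-s}$ is integrable at the origin, and $\int_0^{t/2}\tau^{-s}\dd\tau=(1-s)^{-1}(t/2)^{1-s}$. Combining with the $t^s$ in the integrand produces a bound of the form $Ct\,e^{-c2^{j\alpha}t/2}$. Writing $t=2^{-j\alpha}(2^{j\alpha}t)$ and applying the elementary estimate $xe^{-cx/2}\leq C$ for $x\geq 0$ with $x=2^{j\alpha}t$ then yields $C2^{-j\alpha}$.

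On the piece $[t/2,t]$, the situation is simpler: $\tau\geq t/2$ gives $t^s\tau^{-s}\leq 2^s$, so the integral is controlled by $2^s\int_{t/2}^t e^{-c2^{j\alpha}(t-\tau)}\dd\tau\leq 2^s\int_0^\infty e^{-c2^{j\alpha}u}\dd u=C\,2^{-j\alpha}$ after the substitution $u=t-\tau$.

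The constraint $0\leq s<1$ is precisely what is needed to make $\tau^{-s}$ integrable on $[0,t/2]$; this is the only delicate point, and there is no serious obstacle. Summing the two contributions delivers the claimed estimate with $C=C(\alpha,c,s)$ independent of $j$ and $t$.
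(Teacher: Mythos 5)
Your proof is correct and follows exactly the route the paper indicates: splitting the integral over $[0,t/2]$ and $[t/2,t]$, absorbing the exponential on the first piece (where $\tau^{-s}$ is integrable since $s<1$) and the weight $t^s\tau^{-s}\le 2^s$ on the second. The paper only sketches this split with a citation, so your write-up simply supplies the details of the same argument.
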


Thanks to Lemma \ref{lemma.gamma.split}, we have that for every $r'\in (1,\frac{1}{s})$,
\begin{align}
  \sup_{j\in \mathbb{Z}} \int_0^t \psi^{r'}_j (t,\tau)2^{j\alpha}\dd \tau = \sup_{j\in\mathbb{Z}} 2^{j\alpha} \int_0^t e^{- r' \bar{\mu} 2^{j\alpha}(t-\tau)} t^{s r'} \tau^{-s r'} \dd \tau \le C.
\end{align}
On the other hand, if $\bar{s}=\frac{N}{2} + 1-\alpha$ and $s\in (0,1- \frac{1}{\alpha})$,
we have $\delta =0$ (recalling $\delta$ is given by \eqref{def:del}) and $\frac{N}{2} + \bar{s} + \delta s\alpha - s\alpha > N+ 2(1-\alpha)>0 $;
if $\bar{s}=-s_0\in (-\frac{N}{2}, \frac{N}{2}+1-\alpha)$ and $s \in (0, 1)$ satisfy
$\bar{s} + s\alpha \leq \frac{N}{2} + 1-\alpha$,
it is evident that $\delta =1$ and $\frac{N}{2} + \bar{s} + \delta s\alpha - s\alpha= \frac{N}{2} + \bar{s} >0$.
Moreover, due to $\frac{1}{r'}\in(s,1)$, we can let $\frac{1}{r'}$ be sufficiently close to $s$ so that $\frac{N}{2} + \bar{s} + \delta s\alpha - \frac{\alpha}{r'} >0$.
Consequently, the discrete Young's inequality implies that
\begin{equation}\label{eq.I1I2}
\begin{aligned}
  I_1 + I_2 \le C \Vert u\Vert_{\widetilde{L}^r_t (\dot{B}^{\frac{N}{2}+1-\alpha+\frac{\alpha}{r}}_{2,1})} \leq C \Vert u\Vert_{\widetilde{L}^\infty_t (\dot{B}^{\frac{N}{2}+1-\alpha}_{2,1})} + C \Vert u\Vert_{\widetilde{L}^1_t (\dot{B}^{\frac{N}{2}+1}_{2,1})}.
\end{aligned}
\end{equation}
Hence gathering the above estimates with \eqref{eq:sig-u-bdd2} we deduce that for every $(s,\bar{s})$ satisfying \eqref{cond:s-bars} and $s\in (0,1)$,
\begin{equation*}
\begin{aligned}
  Z_{s,\bar{s}}(t)
  \le C \exp\Big\{ C \Vert u\Vert_{\widetilde{L}^\infty_t (\dot{B}^{\frac{N}{2}+1-\alpha}_{2,1})}
  + C \Vert u\Vert_{\widetilde{L}^1_t (\dot{B}^{\frac{N}{2}+1}_{2,1})} \Big\} \leq C .
\end{aligned}
\end{equation*}

Then we prove \eqref{eq:Zt-bdd} for general $s\in [n,n+1)$ with $n\in \mathbb{Z}^{+}$. 
We only need to treat the case $\bar{s}=-s_0$ and $\bar{s} + s\alpha \leq \frac{N}{2} +1 -\alpha$.
Combining \eqref{eq.Decay.X.l.result.s} with \eqref{eq.Decay.X.h.result} leads to
\begin{equation*}
\begin{aligned}
  Z_{s,\bar{s}}(t)\le C + C Z_{s-1,\bar{s}}(t) + C\int_0^t \Big(\Psi(t,\tau) +
  \Vert u(\tau)\Vert_{\dot{B}^{\frac{N}{2}+1}_{2,1}}\Big) Z_{s,\bar{s}}(\tau) \dd \tau.
\end{aligned}
\end{equation*}
Gronwall's inequality gives that
\begin{equation}
\begin{aligned}
  Z_{s,\bar{s}}(t)\le C\big(1+Z_{s-1,\bar{s}}(t)\big)
  \exp \Big\{C\int_0^t \Big(\Psi(t,\tau)+\Vert u(\tau)\Vert_{\dot{B}^{\frac{N}{2}+1}_{2,1}}\Big)\dd \tau \Big\}.
\end{aligned}
\end{equation}
%
Recalling that $\Psi(t,\tau)$ is given by \eqref{eq.Psi.decay}, and by using H\"older's inequality, Lemma \ref{lemma.gamma.split} and \eqref{eq.I1I2}, we see that for every $r'\in(1,+\infty)$,
\begin{align}
  &\quad\int_0^t \Psi(t,\tau)\dd \tau \nonumber \\
  & \le C\Big(\sup_{j\in\mathbb{Z}}\int_0^t e^{-\bar{\mu}r' 2^{j\alpha}(t-\tau)}2^{j\alpha}\dd \tau\Big)^{\frac{1}{r'}}\bigg(\sum_{j\le j_0}\sum_{j'\le j+4}2^{(j'-j)\frac{\alpha}{r'}} 2^{j'(\frac{N}{2} +1 -\frac{\alpha}{r'})}\Vert \dot\Delta_{j'} u\Vert_{L^{r}_t(L^2)} \nonumber \\
  & \quad  + \sum_{j\le j_0}\sum_{j'>j-4}2^{(j-j')(\frac{N}{2}+\bar{s} +  s \alpha -\frac{\alpha}{r'})}
  2^{j'(\frac{N}{2}+1-\frac{\alpha}{r'})} \Vert \dot\Delta_{j'} u \Vert_{L^r_t (L^2)}\bigg) \nonumber
   \\
  & \le C\Big(\Vert u\Vert_{\widetilde{L}^\infty_t (\dot{B}^{\frac{N}{2}+1-\alpha}_{2,1})} +\Vert u\Vert_{\widetilde{L}^1_t (\dot{B}^{\frac{N}{2}+1}_{2,1})}\Big),
\end{align}
where in the last line we have used the fact $\frac{N}{2} +\bar{s} + s\alpha -\alpha >0$.
Thus the above two inequalities guarantee that for every $\bar{s}=-s_0$, $s\in [n,n+1)$ and $\bar{s} + s\alpha \leq \frac{N}{2} + 1-\alpha$,
\begin{align}
  Z_{s,\bar{s}}(t)\le C + C Z_{s-1,\bar{s}}(t).
\end{align}
Hence, after an iteration of finite times and using the above $0\leq s <1$ result,
we conclude the proof of \eqref{eq:Zt-bdd} for every $(s,\bar{s})$ satisfying \eqref{cond:s-bars}.
\vskip1mm

Next, we prove inequalities \eqref{eq.Decay.X.t+1} and \eqref{eq.decay} from \eqref{eq:Zt-bdd}.
When $t\le 1$, noticing that $(t+1)^s \le 2^s$ and $2^{js\alpha}\le 2^{j_0s\alpha}$ for every $j\le j_0$,  we thus have
\begin{equation}\label{eq.Decay.X.t+1.le}
\begin{aligned}
  (t+1)^s\Big(\Vert &(\sigma,u)^\ell(t)\Vert_{\dot{B}^{\bar{s}+s\alpha}_{2,1}}+\Vert \sigma^h(t)\Vert_{ \dot{B}^{\frac{N}{2}}_{2,1}}
  +\Vert u^h(t)\Vert_{\dot{B}^{\frac{N}{2}+1-\alpha}_{2,1}}\Big) \\
  &\le C(\Vert (\sigma,u)^\ell(t)\Vert_{\dot{B}^{\bar{s}}_{2,1}}+\Vert \sigma^h(t)\Vert_{ \dot{B}^{\frac{N}{2}}_{2,1}} + \Vert u^h(t)\Vert_{\dot{B}^{\frac{N}{2}+1-\alpha}_{2,1}}) \leq C.
\end{aligned}
\end{equation}
The last inequality of \eqref{eq.Decay.X.t+1.le} is provided by \eqref{eq:Zt-bdd} with $s=0$.
When $t> 1$, it is clear that $(t+1)^st^{-s}\le 2^s$, and consequently,
\begin{equation}\label{eq.Decay.X.t+1.ge}
  (t+1)^s\Big(\Vert (\sigma,u)^\ell(t)\Vert_{\dot{B}^{\bar{s}+s\alpha}_{2,1}}+\Vert \sigma^h(t)\Vert_{ \dot{B}^{\frac{N}{2}}_{2,1}} +\Vert u^h(t)\Vert_{\dot{B}^{\frac{N}{2}+1-\alpha}_{2,1}}\Big)
  \le 2^s Z_{s,\bar{s}}(t)\le C.
\end{equation}
$\bullet$ If $(\sigma_0, u_0)\in \widetilde{B}^{\frac{N}{2}+1-\alpha,\frac{N}{2}}\times \dot B^{\frac{N}{2} +1-\alpha} $,
we take $\bar{s}= \frac{N}{2} +1-\alpha$ and $s\in [0, 1-\frac{1}{\alpha})$,
thus the desired inequality \eqref{eq.Decay.X.t+1} follows from \eqref{eq.Decay.X.t+1.le} and \eqref{eq.Decay.X.t+1.ge}.

\noindent
$\bullet$ If the additional condition $(\sigma_0^\ell,u_0^\ell) \in \dot{B}^{-s_0}_{2,\infty}$ with $s_0\in (\alpha-\frac{N}{2}-1,\frac{N}{2}]$ is assumed,
we have $ (\sigma_0,u_0)\in \widetilde{B}^{\bar{s},\frac{N}{2}}\times \widetilde{B}^{\bar{s},\frac{N}{2}+1-\alpha}$ with $\bar{s}= -s_0$,
and by letting $s_1=\bar{s}+s\alpha$, we see that $s_1\in [-s_0, \frac{N}{2} +1-\alpha ]$,
and \eqref{eq.Decay.X.t+1.le}-\eqref{eq.Decay.X.t+1.ge} yield the decay estimate \eqref{eq.decay}.
\vskip1mm

Besides, we prove the asymptotic behavior \eqref{eq.asymptotic}.
For any $\varepsilon >0$, in view of \eqref{eq:sig-u-bdd2}, there exists an integer $N_0>\vert j_0\vert$ (recalling $j_0$ is given by \eqref{eq:j0}) such that
\begin{align*}
  \sup_{t\ge 0} \sum_{j\le -N_0} 2^{j(\frac{N}{2}+1-\alpha)} \big(\Vert \dot\Delta_j \sigma(t) \Vert_{L^2} + \Vert \dot\Delta_j u(t)\Vert_{L^2} \big) \le \frac{\varepsilon}{3}.
\end{align*}
In addition, according to the decay estimate \eqref{eq.Decay.X.t+1}, we infer that for $0<s <1-\frac{1}{\alpha}$,
\begin{equation*}
\begin{aligned}
  & \quad \sum_{-N_0<j\le j_0 }2^{j(\frac{N}{2}+1-\alpha)} (\Vert \dot\Delta_j \sigma(t)\Vert_{L^2}+ \Vert \dot \Delta_j u(t)\Vert_{L^2}) \\
  & \le 2^{N_0s\alpha} \sum_{-N_0<j\le j_0 } 2^{j(\frac{N}{2}+1-\alpha+s\alpha)} (\Vert \dot\Delta_j\sigma(t)\Vert_{L^2} + \Vert \dot\Delta_j u(t)\Vert_{L^2}) \le C(1+t)^{-s},
\end{aligned}
\end{equation*}
and
\begin{align*}
  \sum_{j> j_0} \big(2^{j\frac{N}{2}} \Vert \dot\Delta_j\sigma(t)\Vert_{L^2} + 2^{j(\frac{N}{2}+1-\alpha)} \Vert \dot\Delta_j u(t)\Vert_{L^2}\big)\le C(1+t)^{-s}.
\end{align*}
Then there exists a positive real number $T_0$ such that for every $t\geq T_0$,
\begin{equation*}
  \sum_{j>-N_0} 2^{j(\frac{N}{2}+1-\alpha)} \big( \Vert \dot\Delta_j\sigma(t) \Vert_{L^2} + \Vert \dot\Delta_j u(t)\Vert_{L^2}) \le \frac{2\varepsilon}{3}.
\end{equation*}
Hence for any $\varepsilon >0$, we have that for every $t\geq T_0$,
\begin{equation*}
\begin{aligned}
  \Vert \sigma(t)\Vert_{\widetilde{B}^{\frac{N}{2}+1-\alpha,\frac{N}{2}}}+\Vert u(t)\Vert_{\dot{B}^{\frac{N}{2}+1-\alpha}_{2,1}} \le \varepsilon.
\end{aligned}
\end{equation*}
In other words, the inequality \eqref{eq.asymptotic} holds. We thus finish the proof of Proposition \ref{th.decay.intro}.

\bigskip
\subsection*{Acknowledgements}
Q. Miao was partially supported by NNSF of China (Nos. 12001041, 11871088).
C. Tan was partially supported by NSF grants DMS-1853001 and DMS-2108264.
L. Xue was partially supported by National Key Research and Development Program of China (No. 2020YFA0712900) and NNSF of China (No. 11771043).
\bigskip


\bibliography{EAS-pressure}
\bibliographystyle{plain}

\end{document}